\DeclareFontShape{OMX}{cmex}{m}{n}{
  <-7.5> cmex7
  <7.5-8.5> cmex8
  <8.5-9.5> cmex9
  <9.5-> cmex10
}{}
\newcommand*\rel@kern[1]{\kern#1\dimexpr\macc@kerna}
\newcommand*\widebar[1]{%
  \begingroup
  \def\mathaccent##1##2{%
    \rel@kern{0.8}%
    \overline{\rel@kern{-0.8}\macc@nucleus\rel@kern{0.2}}%
    \rel@kern{-0.2}%
  }%
  \macc@depth\@ne
  \let\math@bgroup\@empty \let\math@egroup\macc@set@skewchar
  \mathsurround\z@ \frozen@everymath{\mathgroup\macc@group\relax}%
  \macc@set@skewchar\relax
  \let\mathaccentV\macc@nested@a
  \macc@nested@a\relax111{#1}%
  \endgroup
}
\newcommand{\ubar}[1]{\underaccent{\bar}{#1}}
\newcommand{\dbar}{d\hspace*{-0.08em}\bar{}\hspace*{0.1em}}
    \newcommand{\wh}{\widehat}
    \newcommand{\X}{\mathcal{X}}
    \newcommand{\wt}{\widetilde}
\begin{document}
\title{Confidence Regions for Filamentary Structures}
%
\author{\name Wanli Qiao \email wqiao@gmu.edu\\
    \addr Department of Statistics\\
    George Mason University\\
    4400 University Drive, MS 4A7\\
    Fairfax, VA 22030, USA}
%
%
\maketitle
%
\begin{abstract} \noindent 
Filamentary structures, also called ridges, generalize the concept of modes of density functions and provide low-dimensional representations of point clouds. Using kernel type plug-in estimators, we give asymptotic confidence regions for filamentary structures based on two bootstrap approaches: multiplier bootstrap and empirical bootstrap. Our theoretical framework respects the topological structure of ridges by allowing the possible existence of self-intersections. Different asymptotic behaviors of the estimators are analyzed depending on how flat the ridges are, and our confidence regions are shown to be asymptotically valid in different scenarios in a unified form. As a critical step in the derivation, we approximate the suprema of the relevant empirical processes by those of Gaussian processes, which are degenerate in our problem and are handled by anti-concentration inequalities for Gaussian processes that do not require positive infimum variance. 
\end{abstract}
%
%

\begin{keywords}
Ridges, filamentary structure, stratified spaces, self-intersections, bootstrap, empirical process, anti-concentration inequalities
\end{keywords}


\section{Introduction}\label{introsection}

Filaments, or filamentary structures, are low-dimensional geometric features in multivariate data, which exhibit network-like topological patterns. Such structures are often characterized by the intersections of different branching pieces. Filamentary structures are often observed in, for example, road maps based on GPS data, earthquake fault lines in remote sensing data, blood vessels in medical imaging data. In particular, the distribution of galaxies in the universe show remarkable filamentary structures called Cosmic Web and it has been of great research interests among cosmologists to extract these structures and describe their topological characteristics \citep{sousbie20083d}. 

One of the mathematical models for filamentary structures is called ridges, which is a lower dimensional set where the density is locally highest in some constrained subspace. By allowing different dimensionality of the constrained subspace, ridges extend the concept of local modes. In this paper we develop rigorous theory for the construction of confidence regions for density ridges using bootstrap approaches, which respects the topological structures (potentially with self-intersections) of the ridges in the population model. 

While ridges can be used to represent the filamentary structures and are estimable from multivariate data, the topological structure of ridges is sensitive to perturbations and unstable when self-intersections exist. Its evidence based on simulations and real data examples has been observed in \cite{pulkkinen2015ridge}. Also see Figure~\ref{fig: toy}. In practice using plug-in estimators for ridges (based on kernel density estimation, for example) can hardly capture any self-intersections of ridges from data. Even though the ridge estimator and its ground truth can be asymptotically close measured by the Hausdorff distance, merely using ridge estimators is often not sufficient to represent the filamentary structures in the data, when their topology is concerned. In this difficult situation, confidence regions for ridges can be useful, because they aim to cover the true ridges.

\begin{figure}[ht]
\begin{center}
\includegraphics[scale=0.39]{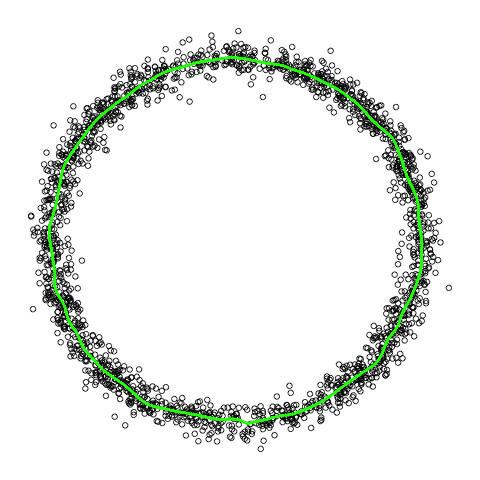}
\includegraphics[scale=0.39]{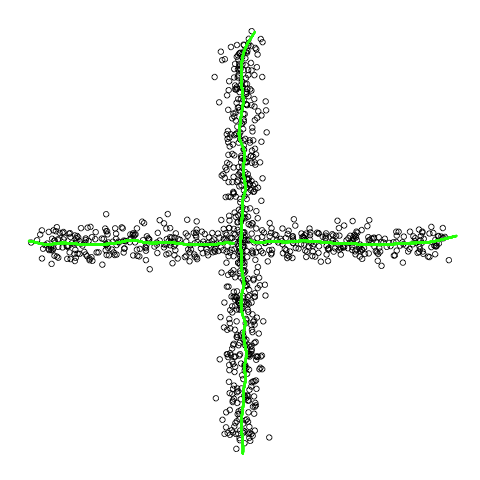}
\caption{Examples of ridge estimation for which the green curves are the ridge estimates using the data points (black dots). Left graph (circle): the estimated ridge is a closed curve without self-intersections. Right graph (cross): the ridge estimates are broken pieces near the intersection.} 
\label{fig: toy}
\end{center}
\end{figure}

Ridges with self-intersections can be described as stratified spaces, which consists of different pieces (called strata) glued together. Intersections of ridges are unstable geometric features, in the following sense. When there exists a self-intersection of the ridge induced by the underlying density function $f$, the self-intersection can disappear if $f$ is perturbed by any arbitrarily small amount measured in the $L_2$ norm. The structural instability of self-intersections is called a generic property of ridges \citep{damon1998generic,miller1998relative}. If the perturbation is considered a continuous procedure depending on a parameter (say time), then the change of structures in ridges can be characterized by a \emph{Morse transition} \citep{damon1998generic}, during which different components of ridges get merged and split as the parameter varies. This is analogous to estimating a level set which includes saddle points --- in fact, ridges are (subsets of) level sets of a special function which we call nonridgeness function in this paper. Such a level set has self-intersections at saddle points, but small perturbations of the density (by adding a small constant to the function, for example) can cause broken pieces near the intersections. See Figure~\ref{illustration}. 

\begin{figure}[ht]
\begin{center}
\includegraphics[height=4cm]{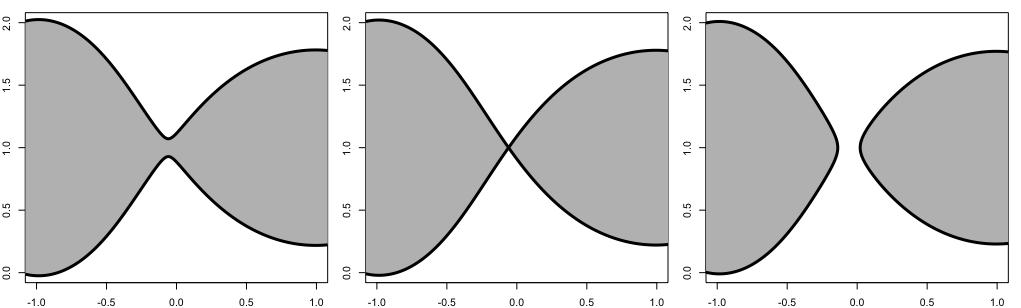}
\caption{Three plots showing the topological changes in the level sets of a Morse function near a saddle point at three levels $t_1<t_2<t_3$. The grey areas are the upper level sets, and the solid dark lines are the boundaries (level sets). It is clear that the topology of the level sets is unstable near the saddle point when the levels are perturbed. }
\label{illustration}
\end{center}
\end{figure}

In addition to the unstable topological structures, another challenge in studying the statistical inference for density ridges arises from the interplay between the first and second derivatives in the definition of ridges, as we describe now. The following definition of ridges has been widely used in the literature (see \citealt{eberly1996ridges}, for example). Let $f$ be a twice differentiable probability density function on $\mathbb{R}^d$. Its gradient and Hessian at $x$ are denoted by $\nabla f(x)$ and $\nabla^2 f(x)$, respectively. Suppose $\lambda_1(x)\geq\cdots\geq\lambda_d(x)$ are the eigenvalues of $\nabla^2 f(x)$ with associated orthonormal eigenvectors $v_1(x),\cdots,v_d(x)$. Let $r$ be an integer such that $1\leq r<d$. The $r$-ridge of $f$ is the set 
\begin{align*}
\textsf{ridge}_r(f) = \{x: v_i(x)^\top \nabla f(x)=0,i=r+1,\cdots,d, \lambda_{r+1}(x)<0\}.
\end{align*}
%
We fix $r$ in this paper and will write $\textsf{ridge}(f)=\textsf{ridge}_r(f)$ in what follows. Note that this definition involves both the first and second derivatives (through the eigenvalues and eigenvectors of the Hessian) of the density functions. Typically the rate of estimating the second derivatives is slower than that of the first derivatives (for example, using kernel density estimators as we do in this paper). This leads to different pointwise rates of convergence for ridge estimation depending on whether a ridge point is near a critical point (where the gradient is zero) or not. 
%
The asymptotic distributional theory of ridges estimation involves Gaussian approximation to the suprema of the deviation in the estimation. A consequence of different rates of convergence at different types of ridge points is the degeneracy of the approximating Gaussian processes, which have zero variance at some points. Due to such degeneracy, typical anti-concentration inequalities for Gaussian processes that require positive infimum variance (see, e.g., Lemma A.1 in \citealt{chernozhukov2014gaussian}) cannot be applied to bound the Kolmogorov distance between the distribution of the suprema of the deviation and that of the approximating Gaussian processes, which is a critical step towards the asymptotic distributional theory for ridge estimation. This problem is solved by a new type of anti-concentration inequality in the theoretical analysis of our confidence regions for ridges. 

The main idea in our approach is briefly described as follows. We denote $V(x)=(v_{r+1}(x),\cdots,v_d(x))$ and call $p(x)=\|V(x)^\top \nabla f(x)\|$ the ``nonridgeness'' function. This function measures how $x$ is different from a ridge point if $\lambda_{r+1}(x)<0$. A ridge point satisfies $p(x)=0$, which achieves the global minimum of the nonridgeness function. Using a kernel density estimator $\wh f$ for $f$ based on a random sample of size $n$, $\textsf{ridge}(f)$ can be estimated by $\textsf{ridge}(\wh f\,)$. The confidence region for $\textsf{ridge}(f)$ we consider is in the form of 
\begin{align}\label{ciform}
C_n(\epsilon_n):=\{x: \wh p(x) \leq \epsilon_n, \wh\lambda_{r+1}(x)<0\},
\end{align}
where $\wh p(x)$ and $\wh\lambda_{r+1}(x)$ are plug-in kernel estimators of $p(x)$ and $\lambda_{r+1}(x)$, respectively, and $\epsilon_n>0$ is a sequence determined by bootstrap procedures. More specifically, for an asymptotic $(1-\alpha)$ confidence region $C_n(\epsilon_n)$, $\epsilon_n$ is set to be the $(1-\alpha)$-quantile of $\sup_{x\in \mathcal{S}_n} \wh p^{\text{bs}}(x)$,  where $\wh p^{\text{bs}}(x)$ is a bootstrap version of the nonridgeness function estimate, using either the multiplier bootstrap or empirical bootstrap, and $\mathcal{S}_n$ is a small neighborhood of $\textsf{ridge}(\wh f\,)$. With such $\epsilon_n$ capturing the variation of the estimated nonridgeness around its truth, the set $C_n(\epsilon_n)$ is envisioned as a small tube around $\textsf{ridge}(\wh f\,)$. Our main result Theorem~\ref{multiplierquantile} give theoretical guarantee for the asymptotic validity of $C_n(\epsilon_n)$ as a confidence region for $\textsf{ridge}(f)$, namely, $\mathbb{P}(\textsf{ridge}(f)\subset C_n(\epsilon_n)) = 1-\alpha+o(1)$. 

{\em Related work.} Besides using ridges, there are various existing approaches to modeling local-dimensional structures (usually manifolds) embedded in higher-dimensional data, including principal curves \citep{hastie1989principal}, multiscale multianisotropic strips \citep{arias2006adaptive}, path density \citep{genovese2009path}, and medial axis of data distribution's support \citep{genovese2012geometry}. Despite its importance and usefulness in characterizing the shape of data, the estimation of stratified spaces has received much less development than manifold estimation, as pointed out in a recent review on Topological Data Analysis (TDA) by~\cite{wasserman2018topological}. Among the limited studies on sets involving self-intersections in the literature,~\cite{arias2017spectral} use local PCA to deal with intersections of manifolds, but only for the purpose of clustering instead of structure extraction or dimensionality reduction. Metric graphs \citep{accggm12, lrw14} and data skeletonization via Reeb graphs \citep{ge2011data, chazal2015gromov} can be used to extract branching structures of data. However, these methods are known to be sensitive to noise. 

In the statistical literature, early work on ridge estimation includes \cite{hall1992ridge, cheng2004estimating}. More recently, \cite{ozertem2011locally} develop an algorithm called the subspace constrained mean shift (SCMS) algorithm to extract ridges from data. \cite{qiao2021algorithms} propose two alternative ridge estimation algorithms that have consistency guarantees. \cite{genovese2014nonparametric} obtain the rate of convergence in estimating ridges by using plug-in kernel estimators. \cite{qiao2016theoretical} derive distributional results for ridge estimation on $\mathbb{R}^2$ using trajectory-wise distance. An empirical bootstrap method of constructing confidence regions for $\textsf{ridge}(\mathbb{E} \wh f)$ in the case of $r=1$ using Hausdorff distance appears in \cite{chen2015asymptotic}. Recently \cite{qiao2021asymptotic} develops confidence regions for $\textsf{ridge}(f)$ using extreme value distributions of $\chi^2$-fields on manifolds. The study of ridge estimation is also related to level set estimation, which has rich statistical literature. Confidence regions for level sets have been developed in \cite{mammen2013confidence, chen2017density, qiao2019nonparametric}, which all focus on a level set at a non-critical level where self-intersections do not exist. 

Below we highlight two major aspects that distinguish the contributions of our work from the ridge confidence region methods in \cite{chen2015asymptotic} and \cite{qiao2021asymptotic}. (1) Again we emphasize that we work under more general conditions that allow stratified structures in $\textsf{ridge}(f)$, which implies that $\textsf{ridge}(f)$ may not necessarily be manifolds. This is in fact one of the main motivations that ridges are widely used as a model of filamentary structures. The results in \cite{chen2015asymptotic} and \cite{qiao2021asymptotic} rely on assumptions that restrict the class of ridges considered in their works to be manifolds with {\em positive reach} (see \citealt{federer1959curvature}), which excludes the possible existence of self-intersections. (2) Our theoretical analysis distinguishes two cases (see Section~\ref{Twocases}) depending on how critical points are related to ridge points, for which different asymptotic behaviors of the estimators are established. Special dedication is given to solve the issue caused by the degeneracy of the approximate Gaussian processes, which further requires new anti-concentration inequalities. Our bootstrap confidence regions, in a unified form, are shown to be valid for both of these cases. Note that \cite{chen2015asymptotic} only consider a scenario when the estimation of ridges is asymptotically determined by the rate of the first derivatives. The analysis in \cite{qiao2021asymptotic} first excludes the existence of critical points on ridges and then adopts a different approach to dealing with them.

We organize the paper as follows. In Section~\ref{backgroundsec} we introduce the relevant concepts, notation and assumptions used in our results. Section~\ref{mainresult} includes the main results. In Section~\ref{gaussapprox} we present the results of Gaussian approximation to the distribution of the suprema of the estimated nonridgeness (Theorem~\ref{gaussianappanti}) by using an anti-concentration inequality for the suprema of Gaussian processes without requiring positive infimum variance (Lemma~\ref{newanticoncentration}). In Section~\ref{bootstrapsec}, we develop the bootstrap confidence regions for ridges. The validity of these confidence regions are shown in Theorem~\ref{multiplierquantile}.  All the proofs are given in Section~\ref{proofsec}.
%

\section{Preliminaries, notation and assumptions}\label{backgroundsec}

We suppose that the object $\textsf{ridge}(f)$ to be estimated is contained in a compact subset of $\mathbb{R}^d$, denoted by $\mathcal{H}$. Without loss of generality, we take $\mathcal{H}=[0,1]^d$ in this paper. Recall that $V(x)$ is a matrix with $v_{r+1}(x),\cdots,v_d(x)$ as its columns and $p(x)=\|L(x)\nabla f(x)\|$ is the nonridgeness function, where $L(x) = V(x)V(x)^\top $. Then the set of ridge points in $\mathcal{H}$ is
$$\mathcal{S}:=\textsf{ridge}(f)=\{x\in\mathcal{H}: p(x)=0, \lambda_{r+1}(x)<0\}.$$
For any $\epsilon\geq0$, let $\mathcal{S}(\epsilon)=\{x\in\mathcal{H}: p(x)\leq \epsilon, \lambda_{r+1}(x)<0\}$ such that $\mathcal{S}=\mathcal{S}(0)$.

Suppose that we have an i.i.d sample $X_1,\cdots,X_n$ from a density function $f$ on $\mathbb{R}^d$. Denote the kernel density estimator $$\wh f(x)=\frac{1}{nh^d}\sum_{i=1}^n K\Big(\frac{x-X_i}{h}\Big),$$
where $h>0$ is a bandwidth and $K$ is a kernel function. Let $\wh\lambda_{1}(x)\geq\cdots\geq \wh\lambda_d(x)$ be the eigenvalues of $\nabla^2 \wh f(x)$ with associated orthonormal eigenvectors $\wh v_1(x),\cdots,\wh v_d(x)$. Let $\wh V(x)=(\wh v_{r+1}(x),\cdots,\wh v_d(x))$ and $\wh L(x)=\wh V(x)\wh V(x)^\top$. Denote $\wh p(x)=\|\wh L(x)\nabla \wh f(x)\|$ and $\wh{\mathcal{S}}(\epsilon)=\{x\in\mathcal{H}: \wh p(x)\leq \epsilon, \wh\lambda_{r+1}(x)<0\}$ for any $\epsilon\geq0$. We write $\wh{\mathcal{S}} \equiv \wh{\mathcal{S}}(0)=\textsf{ridge}(\wh f)$, which is the plug-in ridge estimator using the kernel density estimator.

%

\subsection{Critical points on ridges}
\label{Twocases}
We will consider the following two cases in our analysis of ridge estimation.
\begin{itemize}
\item[\textbf{(a)}] There exists at least one point $x_0\in\mathcal{S}$ such that $\nabla f(x_0)\neq 0$.
\item[\textbf{(b)}] All points in $\mathcal{S}$ are critical points of $f$, that is, $\nabla f(x)=0$ for all $x\in\mathcal{S}$.
\end{itemize}
%
Case (a) is a typical scenario for ridges, where critical points of $f$ may or may not exist on $\mathcal{S}$, and for the former, the critical points may be isolated or non-isolated. Case (b) corresponds to a scenario when the ridge is completely flat. For case (b) we can equivalently write 
\begin{align}\label{caseb}
\mathcal{S} = \{x\in\mathcal{H}: \nabla f(x)=0, \lambda_{r+1}(x)<0\},
\end{align}
which is a subset of all the critical points of $f$ on $\mathcal{H}$. We also note that case (b) includes an interesting setting when $\mathcal{S}$ consists of all the local modes of $f$. Local mode estimation~\citep[e.g.,][]{chen2016comprehensive, qiao2022space} is usually considered for Morse functions, which are $C^2$ functions whose critical points are all non-degenerate~\citep{milnor1963morse}, but $f$ is not necessarily Morse in our setting. 

We distinguish these two cases because the transition from case (a) to case (b) causes some qualitative change when we use $\textsf{ridge}(\wh f\,)$ to estimate $\mathcal{S}$. It is known that using kernel estimators, the estimation of the first derivatives of the density function enjoys a faster rate of convergence than that of the second derivatives under standard assumptions, if the bias is ignored. When a ridge point is locally flat (near a critical point), the asymptotic behavior of estimating the ridge point is mainly determined by the estimation of the first derivatives; otherwise for non-flat ridge points, the slower rate of estimating the second derivatives dominates the overall estimation. As a consequence, in case (b) the uniform rate of convergence in ridge estimation only depends on the estimation of the first derivatives, while in case (a) the uniform rate is determined by the estimation of the second derivatives. This distinction is reflected in our assumption (H) below. 

\subsection{Notation and assumptions}\label{assumptionlist}
We introduce some notation used in this paper. For $x\in\mathbb{R}^d$ and a positive number $\delta$, let $\mathcal{B}(x,\delta)=\{y\in\mathbb{R}^d: \|x-y\|\leq\delta\}$ be the closed ball centered at $x$ with radius $\delta$. For any subset $\mathcal{A}\subset\mathbb{R}^d$, let $\mathcal{A}\oplus\delta=\cup_{x\in A} \mathcal{B}(x,\delta)$, $\mathcal{A}^\circ$ be the interior of $\mathcal{A}$, and $\mathcal{A}^\complement$ be its complement. For any  point $x\in\mathbb{R}^d$, let $d(x,\mathcal{A})=\inf_{y\in\mathcal{A}}\|x-y\|$ be the distance from $x$ to $\mathcal{A}$. For a (semi)metric space $(T,\dbar)$, Let $N(T,\dbar,\epsilon)$ and $N_p(T,\dbar,\epsilon)$ be the $\epsilon$-covering number and $\epsilon$-packing number of $T$, respectively. See Section~\ref{proofsec} for their precise definitions.

Let $S\mathbb{R}^{d\times d}$ be the class of all $d\times d$ symmetric real matrices. For any $A\in S\mathbb{R}^{d\times d}$, the vectorization operator $\textsf{vec}$ converts $A$ by stacking its columns into a $d^2$-dimensional column vector, while the half-vectorization operator $\textsf{vech}$ vectorizes the lower triangular part of $A$ into a $d(d+1)/2$-dimensional column vector. The duplication matrix $\mathbf{D}$, which is a $d^2\times d(d+1)/2$ constant matrix consisting of only 0's and 1's, relate the matrix operators $\textsf{vec}$ and $\textsf{vech}$ through $\textsf{vec} A = \mathbf{D} \textsf{vech}A$. Note that the operator $\textsf{vec}$ can in fact be similarly defined for matrices of any dimensions. Let $\lambda_{\min}(A)$ and $\lambda_{\max}(A)$ be the smallest and largest eigenvalues of $A$, respectively. For two matrices $A$ and $B$, let $A\otimes B$ be their Kronecker product. 
For any matrix $A$, let $A^+$ be the Moore-Penrose pseudoinverse of $A$, and $\|A\|_F$ and $\|A\|_{\text{op}}$ be the Frobenius norm and operator norm of $A$, respectively. 
For two sequences $a_n$ and $b_n$, we write $a_n\lesssim b_n$ if there exists a positive constant $C$ such that $a_n\leq Cb_n$. We also write $a_n\gtrsim b_n$ if $b_n\lesssim a_n$, and $a_n\simeq b_n$ if $a_n\lesssim b_n$ and $b_n\lesssim a_n$. 

For $\alpha=(\alpha_1,\cdots,\alpha_d)\in\mathbb{N}^d$, let $|\alpha| = \alpha_1+\cdots+\alpha_d$. For an $|\alpha|$ times differentiable function $g:\mathbb{R}^d\mapsto\mathbb{R}$, denote $\partial^{\alpha} g(x) = \frac{\partial^{|\alpha|} }{\partial^{\alpha_1}x_1\cdots \partial^{\alpha_d}x_d} g(x),\; x\in\mathbb{R}^d.$ Also we denote $d^2 g = \textsf{vech}\nabla^2 g$. For a positive integer $m$, let $\mathbb{S}_m=\{x\in\mathbb{R}^{m+1}: \|x\|=1\}$ be the unit $m$-sphere. For $k\ge0$, and $h>0$, let $\gamma_{n,h}^{(k)}=\sqrt{\log n/(nh^{d+2k})}$, which are the uniform rates of convergence of kernel estimators of the $k$th derivatives of the density centered at their expectations. For any $\gamma>0$ and $L>0$, denote by $\Sigma(\gamma,L_0,\mathcal{H})$ the the H\"{o}lder class of functions  $p:\mathbb{R}^d\rightarrow\mathbb{R}$, which are $\lfloor \gamma\rfloor$-times continuously differentiable on $\mathcal{H}$ and satisfy $|\partial^\alpha p(x) - \partial^\alpha p(y) | \leq L_0 \|x-y\|^{|\alpha|-\gamma}$ for all $x,y\in\mathcal{H}$, and $\alpha\in\mathbb{N}^d$ with $|\alpha|=\lfloor \gamma\rfloor$. Below is the definition of $\beta$-valid kernels, which can be also found in, e.g., \cite{rigollet2009optimal}.

\begin{definition}[$\beta$-valid kernel]
For a fixed $\beta>0$, a function $K: \mathbb{R}^d\rightarrow \mathbb{R}$ is called a $\beta$-valid kernel, if it has support $[0,1]^d$, and satisfies $\int_{\mathbb{R}^d} K(x)dx=1$, $\int_{\mathbb{R}^d} |K(x)|^pdx=1$ for all $p\geq 1$, $\int_{\mathbb{R}^d} \|x\|^\beta K(x)dx<\infty$. When $\lfloor \beta\rfloor \geq 1$, we further require $\int_{\mathbb{R}^d} x^s K(x)dx=0,$ for all $s=(s_1,\cdots,s_d)\in\mathbb{N}^d$ such that $1\leq |s|\leq \lfloor \beta\rfloor $, where for any $x=(x_1,\cdots,x_d)\in\mathbb{R}^d$, $x^s=\prod_{i=1}^d x_i^{s_i}$. 
\end{definition}

We use the following assumptions in this paper.\\
%
%
(${\bf F1}$) $f\in\Sigma(2+\beta,L_0,\mathcal{H})$ for some $\beta>0$ and $L_0>0$. Also there exists a constant $c_0>0$ such that $\inf_{x\in\mathcal{H}} f(x)\geq c_0$.\\[3pt]
%
%
%
%
(${\bf F2}$) There exists $\delta_0>0$ such that $\mathcal{S}\oplus \delta_0\subset\mathcal{H}$ and $\mathcal{P}_0\oplus\delta_0\subset\mathcal{H}$, where $\mathcal{P}_0:=\{y\in\mathcal{H}: p(y)=0\}$. There exists $e_0>0$ such that $\lambda_{r}(x) - \lambda_{r+1}(x)>e_0$ for all $x\in\mathcal{H}$ and $|\lambda_{r+1}(x)|>e_0$ for all $x\in\mathcal{P}_0\oplus\delta_0$.\\[3pt]
(${\bf F3}$) There exists $0<\beta^\prime<\beta$ such that for all $x\in\mathcal{S}\oplus \delta_0$
\begin{align}
\label{betaprime1}
p(x) \ge C_L \cdot d(x,\text{ridge}(f))^{\beta^\prime}, \text{ for some } C_L>0.
\end{align}
There exists $x_*\in\text{ridge}(f)$ such that for all $x\in \mathcal{B}(x_*,\delta_0)$,
\begin{align}
\label{betaprime2}
p(x) \leq C_U\cdot d(x,\text{ridge}(f))^{\beta^\prime}, \text{ for some } C_U>0.
\end{align}
%
(${\bf F4}$) There exists at least one point $x_0\in \mathcal{S}$ with some $0<r_0\le \delta_0$, $\eta_0>0$, $\phi_0>0$, and $C_0>0$ such that $N_p(\mathcal{S}_{x_0,r_0},\|\cdot\|,\eta)\geq C_0 \eta^{-\phi_0}$ for all $\eta\in(0,\eta_0]$, where $\mathcal{S}_{x_0,r_0}:=\mathcal{S}\cap \mathcal{B}(x_0,r_0)$. For case (a), we require that $\|\nabla f(x)\|>0$ for all $x\in\mathcal{S}_{x_0,r_0}$. \\[3pt]
%
(${\bf K1}$) The function $K$ is $\beta$-valid kernel, with Lipschitz continuous second partial derivatives.\\[3pt]
%
(${\bf K2}$) There exists a ball $\mathcal{B}\subset\mathbb{R}^d$ such that the component functions of both $\mathbf{1}_\mathcal{B} (s)d^2 K(s)$ and $\mathbf{1}_\mathcal{B}(s) \nabla K(s)$ are linearly independent, where $\mathbf{1}_\mathcal{B}$ in the indicator function on $\mathcal{B}$.\\ [3pt]
(${\bf H}$) We assume that there exist sequences $\rho_n\to0$ and $h=h_n\to0$ such that\\
\indent $\;$ for case (a): $\gamma_{n,h}^{(2)}=o(\rho_n)$ and $(\log n)^{3/2} (h^\beta(\gamma_{n,h}^{(2)} )^{-1} + h^{-1}\rho_n^{1/\beta^\prime}) \rightarrow 0$ as $n\rightarrow\infty$;\\
\indent $\;$ for case (b): $\gamma_{n,h}^{(1)}=o(\rho_n)$ and $(\log n)^{3/2} (h^\beta(\gamma_{n,h}^{(1)} )^{-1} + \gamma_{n,h}^{(2)} +h^{-1}\rho_n^{1/\beta^\prime})\rightarrow 0$ as $n\rightarrow\infty$.

\begin{remark}
\label{assumptionremark}
{\em
1. (F1) is a differentiability assumption. In the literature of statistical inference for ridges (see \citealt{chen2015asymptotic,qiao2016theoretical,qiao2021asymptotic}, it is often assumed that $\beta=2$, that is, $f$ is four times continuous differentiable.

2. (F2) is a set of global assumptions. They guarantee the continuity of the nonridgeness function and preclude the existence of a point $x$ such that $V(x)^\top \nabla f(x)=0$ and $\lambda_{r+1}(x) = 0$, which can be viewed as a degenerate ridge point. A similar assumption is used in \cite{genovese2014nonparametric,qiao2016theoretical,qiao2021asymptotic}.

3. (F3) is a set of local conditions which are assumed to hold in a neighborhood of the ridge $\mathcal{S}$. The conditions in \eqref{betaprime1} and \eqref{betaprime2} make the rate $\beta^\prime$ sharp. The parameter $\beta^\prime$ is used to describe the global speed of deviation of the non-ridgeness function from level 0 as the power of distance from the ridge. If we focus on small areas near the ridge, then compared with the value at regular ridge points, $\beta^\prime$ is larger near the self-intersections of ridges where the level sets of $p$ change more abruptly when the levels are perturbed. In some cases, $\beta^\prime$ can be explicitly given. For example, $\beta^\prime  = 1$ under the assumptions in \cite{genovese2014nonparametric} or \cite{qiao2021algorithms}. In an example given in~\cite{qiao2021algorithms} where $f(u,v)=\frac{3}{8}(1-u^2)v$, $u\in(-1,1), v\in(0,2)$, $\textsf{ridge}(f)$ consists of two self-intersected curves and $\beta^\prime=2$. 

Similar assumptions have been used in the literature of density level set estimation, where we replace $p$ and $\text{ridge}(f)$ by the density $f$ and one of its level sets, respectively. See, for example, \cite{singh2009adaptive,steinwart2015fully,jiang2017density,wang2019dbscan}. When the Morse theory~\citep{milnor1963morse} is applied, it is known that $\beta^\prime=1$ at a regular level and $\beta^\prime=2$ at a critical level for which self-intersections can occur near saddle points as we show in Figure~\ref{illustration}. 


4. Assumption (F4) is very mild and states that the ridge cannot be arbitrarily small like finitely many isolated points. Similar assumptions for level set estimation have been used in e.g., \cite{singh2009adaptive,wang2019dbscan}. 

5. Assumptions (K1) and (K2) on the kernel function $K$ are regular ones for ridge estimation. See \cite{qiao2021asymptotic} for some relevant discussions.

6. Our analysis (see Proposition~\ref{ridgeapptheorem} below) shows that the rate of convergence for the ridge estimation is equivalent to that of the second derivative estimation for case (a) and to that of the first derivative estimation for case (b), which is why we have different requirements for the turning parameters for these two cases in assumption (H). In this assumption, an undersmoothing bandwidth is used, which makes the bias in the kernel estimation asymptotically negligible.
}\end{remark}

\section{Main results}\label{mainresult}

The form of the confidence regions for $\mathcal{S}$ we study in this paper is $C_n(\epsilon_n)$ as given in (\ref{ciform}). Notice that $\mathcal{S}\subset C_n(\epsilon_n)$ is equivalent to (i) $\sup_{x\in\mathcal{S}} \wh p(x) \leq \epsilon_n$ and (ii) $\sup_{x\in\mathcal{S}} \wh\lambda_{r+1}(x) <0$. We can show that condition (ii) holds with a probability tending to one under the assumptions listed in Section~\ref{assumptionlist} (see the proof of Corollary~\ref{theoryci} below). This is mainly because of the strong consistency of $\wh \lambda_{r+1}$ as an estimator of $\lambda_{r+1}$, which is assumed to be strictly negative on $\mathcal{S}$. To make $C_n(\epsilon_n)$ a valid asymptotic $(1-\alpha)$ confidence region for $\mathcal{S}$, $\epsilon_n$ can be chosen to be the $(1-\alpha)$-quantile of the (asymptotic) distribution of $\sup_{x\in\mathcal{S}} \wh p(x)$. We consider bootstrap methods to approximate this distribution: $\epsilon_n$ is determined as the $(1-\alpha)$-quantile of the supremum of the bootstrap version of $\wh p$ over $\wh{\mathcal{S}}(\rho_n)$, for a sequence $\rho_n\rightarrow0$ as $n\rightarrow\infty$ satisfying assumption (H). We note that under certain conditions $\epsilon_n$ can also be obtained by bootstrapping $\wh p$ over $\wh{\mathcal{S}}=\wh{\mathcal{S}}(0)$. See Section~\ref{DirectBoot}.
%

We will first consider a multiplier bootstrap method. Let $\X_n=\{X_1,\cdots,X_n\}$ and $\sigma(\X_n)$ be the sigma-algebra generated by $\X_n$. Let $e_1,\cdots,e_n$ be independent standard normal random variables, which are independent of $\X_n$. Define
\begin{align*}
\wh f^e(x) = \wh f(x) + \frac{1}{n} \sum_{i=1}^n e_i \Big( \frac{1}{h^d} K\Big(\frac{x-X_i}{h}\Big) - \wh f(x)\Big).
\end{align*}
Let $\wh\lambda_{1}^e(x)\geq\cdots\geq \wh\lambda_d^e(x)$ be the eigenvalues of $\nabla^2 \wh f^e(x)$ with associated orthonormal eigenvectors $\wh v_1^e(x),\cdots,\wh v_d^e(x)$. Denote $\wh V^e(x)=(\wh v_{r+1}^e(x),\cdots,\wh v_d^e(x))$ and $\wh L^e (x)=\wh V^e(x)\wh V^e(x)^\top $. Let $\wh p^e (x) = \|\wh L^e (x) \nabla \wh f^e (x)\|$ and $t_{1-\alpha}^e$ be the $(1-\alpha)$-quantile of $\sup_{x\in\wh{\mathcal{S}}(\rho_n)} |\wh p^e (x) - \wh p (x)|$ conditional on $\X_n$ for $\alpha\in(0,1)$. Our $(1-\alpha)$ confidence region for $\mathcal{S}$ based on multiplier bootstrap is given by $ \wh{\mathcal{S}}(t_{1-\alpha}^e)$, which can be numerically computed based on Monte Carlo simulations. 
%
%
We note that the construction of $ \wh{\mathcal{S}}(t_{1-\alpha}^e)$ does not rely on any ridge extraction algorithm (see, e.g., \citealt{ozertem2011locally,qiao2021algorithms}), which has a computational advantage over bootstrap methods that have such a requirement. 

The same idea of bootstrapping $\wh p$ over $\wh{\mathcal{S}}(\rho_n)$ also applies to the empirical bootstrap method. See Section~\ref{empiricalboot}.


%
%

\subsection{Gaussian approximation of suprema of estimated nonridgeness}\label{gaussapprox}

In this section we study the Gaussian approximation of the distribution of $\sup_{x\in\mathcal{S}} \wh p(x)$, which will be shown to have different forms for cases (a) and (b) specified in Section~\ref{Twocases}. The approximation relies on finding the leading term in the asymptotic expansion of 
\begin{align}
\wh p(x) - p(x)
=&\|\wh L(x)\nabla \wh f(x)\| - \|L(x)\nabla f(x)\|,\label{lfexpress}
\end{align} 
which in turn involves the Taylor expansion of $\wh L(x) - L(x)$ as matrix-valued functions of the Hessian of the density. 
We need to introduce more notation. Suppose that $\nabla^2 f(x)$ has $q$ distinct eigenvalues: $\mu_1(x)>\cdots>\mu_q(x)$, each with multiplicities $\ell_1,\cdots,\ell_q$.  Here $q$ is a positive integer with $1\leq q\leq d$. Note that $q$ as well as $\ell_1,\cdots,\ell_q$ may depend on $x$ but we have suppressed this dependence in the notation for simplicity. Let $s_0=0$, $s_1=\ell_1,\cdots,s_q=\ell_1+\cdots+\ell_q$. Then $\mu_j(x)=\lambda_{s_{j-1}+1}(x)=\cdots=\lambda_{s_j}(x)$. Let $E_j(x)$ be a $d\times \ell_j$ matrix with orthonormal eigenvectors $v_{s_{j-1}+1}(x),\cdots,v_{s_j}(x)$ as its columns. Define the eigenprojection matrix $P_j(x) = E_j(x)E_j(x)^\top ,$ $j=1,\cdots,q$, which orthogonally projects any $d$-dimensional vector onto the space spanned by the columns of $E_i(x)$. Even though $E_j(x)$ is not uniquely defined when $\ell_j\geq 2$, $P_j(x)$ is independent of any particular choice of $E_j(x)$. Suppose that $\lambda_r(x)\neq \lambda_{r+1}(x)$ as in assumption (F2) and there are $q_r=q_r(x)$ distinct eigenvalues larger than $\lambda_{r+1}(x)$, that is, $\mu_{q_r}(x)=\lambda_r(x)$ and $\mu_{q_r+1}(x)=\lambda_{r+1}(x)$. Then note that by $V(x)=(E_{q_r+1}(x),\cdots,E_q(x))$ we can write
\begin{align}\label{LPexpression}
L(x) = \sum_{j=q_r+1}^q E_j(x)E_j(x)^\top =\sum_{j=q_r+1}^q P_j(x).
\end{align}
%
%
For $j,k=1,\cdots,q$, when $j\neq k$, let $\nu_{jk}(x)=[\mu_j(x) - \mu_k(x)]^{-1} $ and
\begin{align}\label{sjxexpress}
S_j(x) = \sum_{\substack{k=1 \\ k\neq j}}^q \nu_{jk}(x)P_k(x).
\end{align}
It is easy to verify that $S_j(x)  = (\mu_j(x)\mathbf{I}_d - \nabla^2 f(x))^+ $ by using the definition of pseudoinverse of matrices and the following properties: $S_j(x)P_j(x)=P_j(x)S_j(x)=0$ and $(\nabla^2 f(x) - \mu_j(x)\mathbf{I}_d) S_j = S_j (\nabla^2 f(x) - \mu_j(x)\mathbf{I}_d) = P_j(x) - \mathbf{I}_d$. 

Recall that $\mathbf{D}$ is the duplication matrix (see Section~\ref{backgroundsec}). Denote 
\begin{align}\label{MhTfullexpress}
M(x)^\top  =  \sum_{j=q_r+1}^q \{  P_j(x) \otimes [S_j(x) \nabla f(x)]^\top  + S_j(x) \otimes [P_j(x) \nabla f(x)]^\top  \} \mathbf{D}.
\end{align}
The above expression can be simplified for $x\in\mathcal{S}$. For $x\in\mathcal{S}$, since $P_j(x)\nabla f(x)=0$ for $q_r+1\leq j\leq q$ by the definition of ridges, we can write
\begin{align}\label{MhTexpress}
M(x)^\top  =  \mathop{\sum\sum}\limits_{q\geq j\geq q_r+1>k\geq 1} \nu_{jk}(x) \{  P_j(x) \otimes [P_k(x) \nabla f(x)]^\top  \} \mathbf{D}.
\end{align}
Also we denote 
\begin{align}
&\varepsilon_n^{(0)} = \sup_{x\in\mathcal{H}} |\wh f(x) - f(x)|, \label{varepsilon0}\\
&\varepsilon_n^{(1)} = \sup_{x\in\mathcal{H}}\|\nabla \wh f(x)- \nabla f(x)\|, \label{varepsilon1}\\
&\varepsilon_n^{(2)} = \sup_{x\in\mathcal{H}}\|\nabla^2 \wh f(x)-\nabla^2 f(x)\|_F^2. \label{varepsilon2}
\end{align}
%
%
Let $D_{n,1}(x) = \nabla \wh f(x)- \mathbb{E}\nabla \wh f(x)$ and $D_{n,2}(x) = d^2 \wh f(x)-\mathbb{E} d^2 \wh f(x)$, which are vectors of the kernel estimators of the first and second derivatives of the density, respectively, centered at their expectations. The following proposition provides the leading terms in the asymptotic expansion of $\sup_{x\in\mathcal{S}}\wh p(x)$, when the bias is made asymptotically negligible.
\begin{proposition}\label{ridgeapptheorem}
Assume that (F1), (F2), and (K1) hold. Suppose $n\rightarrow\infty$ and $h\rightarrow0$ such that $\max\{\varepsilon_n^{(0)},\varepsilon_n^{(1)},\varepsilon_n^{(2)}\}\rightarrow0$. We have 
\begin{align}\label{ridgenessapprox}
 \sup_{x\in\mathcal{S}}\wh p(x) = 
 \begin{cases}
 \sup_{x\in\mathcal{S}} \|M(x)^\top D_{n,2}(x) \| + O(h^\beta +\varepsilon_n^{(1)} + (\varepsilon_n^{(2)} )^2) & \text{ for case (a)}\\
\sup_{x\in\mathcal{S}} \|L(x)^\top D_{n,1}(x) \| + O(h^\beta + \varepsilon_n^{(1)} \varepsilon_n^{(2)}) & \text{ for case (b)}.
 \end{cases}
\end{align}
 .
\end{proposition}

\begin{remark}\label{ridgeleading}
{\em We give the rates of the terms on the right-hand side of of (\ref{ridgenessapprox}) in order to understand why the first terms have dominant rates in the expansions. Standard calculations for the uniform rates of convergence for the density and its derivative estimation can be used to show that $\varepsilon_n^{(0)}= O_p(\gamma_{n,h}^{(0)}) + O(h^\beta)$, $\varepsilon_n^{(1)}= O_p(\gamma_{n,h}^{(1)}) + O(h^\beta),$ and $\varepsilon_n^{(2)}= O_p(\gamma_{n,h}^{(2)}) + O(h^\beta).$ See \cite{gine2002rates, einmahl2005uniform, arias2016estimation}. Similarly, it can be shown that $\sup_{x\in\mathcal{S}} \|M(x)^\top D_{n,2}(x) \| = O_p(\gamma_{n,h}^{(2)})$ and $\sup_{x\in\mathcal{S}} \|M(x)^\top D_{n,1}(x) \| = O_p(\gamma_{n,h}^{(1)})$. Note that all the $O(h^\beta)$ terms (including those on the right-hand side of (\ref{ridgenessapprox}) are due to the bias in the kernel type estimators. If the $O(h^\beta)$ terms are all asymptotically negligible (under assumption (H), for example), then the first terms on the right-hand side of (\ref{ridgenessapprox}) are indeed the leading terms.
%
%
}\end{remark}

Next we treat $\sup_{x\in\mathcal{S}} \|L(x)^\top D_{n,1}(x) \|$ and $\sup_{x\in\mathcal{S}} \|M(x)^\top D_{n,2}(x) \|$ on the right-hand side of (\ref{ridgenessapprox}) as the suprema of empirical processes indexed by classes of functions. For $x\in\mathcal{S},$ and $z\in\mathbb{S}_{d-1}$, define 
\begin{align}
g_{x,z}(y) = 
\begin{cases}
h^{-d/2}z^\top  M(x)^\top  d^2K(\frac{x-y}{h}) & \text{ for case (a)}\\
h^{-d/2}z^\top  L(x)^\top  \nabla K(\frac{x-y}{h}) & \text{ for case (b)},
\end{cases}
\end{align}
 and the class of functions 
\begin{align*}
\mathcal{F} = \{g_{x,z}(\cdot): x\in\mathcal{S},z\in\mathbb{S}_{d-1} \}.
\end{align*}
Let $\mathbb{G}_n(g) = \frac{1}{\sqrt{n}} \sum_{i=1}^n [g(X_i) - \mathbb{E} g(X_i)]=\sqrt{n}[\mathbb{P}_n(g) - \mathbb{P}(g)]$, where $\mathbb{P}_n$ is the empirical measure based on $\X_n$ such that $\mathbb{P}_n (g) = n^{-1}\sum_{i=1}^n g(X_i)$, and $\mathbb{P}(g)=\mathbb{E}g(X)$. Then by the definition of Euclidean norm we can write
%
\begin{align*}
Z : = \sup_{g\in \mathcal{F}} \mathbb{G}_n(g) = 
\begin{cases}
\omega_n\sup_{x\in\mathcal{S}} \|M(x)^\top D_{n,2}(x) \| & \text{ for case (a)}\\
\omega_n\sup_{x\in\mathcal{S}} \|L(x)^\top D_{n,1}(x)  \| & \text{ for case (b)},
\end{cases}
\end{align*}
%
%
%
where
\begin{align*}
\omega_n = 
\begin{cases}
\sqrt{nh^{d+4}} & \text{ for case (a)}\\
\sqrt{nh^{d+2}} & \text{ for case (b)}.
\end{cases}
\end{align*}
In Proposition~\ref{ridgeapptheorem} we have shown the approximations $ \omega_n \sup_{x\in\mathcal{S}}\wh p(x) \approx Z$. In the following corollary we provide the rates of the errors in these approximations.
\begin{corollary}\label{gaussianappcorollary}
Assume that (F1), (F2) and (H) hold. Then there exists a constant $C_1>0$ such that for $n$ large enough,
\begin{align}\label{firstapproximation}
\mathbb{P} \Big(  | \omega_n \sup_{x\in\mathcal{S}}\wh p(x) - Z | > C_1 \delta_n\Big)  \leq n^{-1},
\end{align}
where 
\begin{align}
\delta_n = 
\begin{cases}
\sqrt{\log n} (h^\beta(\gamma_{n,h}^{(2)})^{-1} + h + \gamma_{n,h}^{(2)})  & \text{ for case (a)}\\
\sqrt{\log n} (h^\beta(\gamma_{n,h}^{(1)})^{-1} +\gamma_{n,h}^{(2)}) & \text{ for case (b)}.
\end{cases}
\end{align}
\end{corollary}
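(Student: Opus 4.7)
The plan is to combine the deterministic asymptotic expansion from Theorem~\ref{ridgeapptheorem} with high-probability uniform deviation bounds for the kernel estimators of the first and second derivatives, and then multiply through by $\omega_n$ and carefully match each resulting term against the stated $\delta_n$ using the arithmetic of $\gamma_{n,h}^{(k)}$ and the undersmoothing condition (H). Concretely, Theorem~\ref{ridgeapptheorem} yields, on the event where $\max\{\varepsilon_n^{(0)},\varepsilon_n^{(1)},\varepsilon_n^{(2)}\}$ is small,
\begin{align*}
|\omega_n \sup_{x\in\mathcal{S}}\wh p(x) - Z| \leq
\begin{cases}
C\,\omega_n\bigl(h^\beta+\varepsilon_n^{(1)}+(\varepsilon_n^{(2)})^2\bigr) & \text{case (a)},\\
C\,\omega_n\bigl(h^\beta+\varepsilon_n^{(1)}\varepsilon_n^{(2)}\bigr) & \text{case (b)}.
\end{cases}
\end{align*}
Thus the task reduces to showing that the right-hand side is bounded by a constant times $\delta_n$ with probability at least $1-n^{-1}$.

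For Step 2, I would invoke the Talagrand-type $L_\infty$ concentration bounds for kernel estimators of density derivatives due to \cite{gine2002rates} and \cite{einmahl2005uniform} (and as used in \cite{arias2016estimation}) to assert the existence of a constant $C'$ with
\begin{align*}
\mathbb{P}\bigl(\varepsilon_n^{(k)} > C'(\gamma_{n,h}^{(k)}+h^\beta)\bigr) \leq (2n)^{-1},\quad k=1,2,
\end{align*}
so that on the intersection event $\mathcal{E}_n$ of probability at least $1-n^{-1}$, each $\varepsilon_n^{(k)}$ is of order $\gamma_{n,h}^{(k)}+h^\beta$. Step 3 is then bookkeeping. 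Using $\omega_n=\sqrt{\log n}/\gamma_{n,h}^{(2)}$ for case (a), the three basic identities $\omega_n h^\beta=\sqrt{\log n}\,(\gamma_{n,h}^{(2)})^{-1}h^\beta$, $\omega_n\gamma_{n,h}^{(1)}=h\sqrt{\log n}$, and $\omega_n(\gamma_{n,h}^{(2)})^2=\sqrt{\log n}\,\gamma_{n,h}^{(2)}$ translate the three bias/variance contributions into the three summands of $\delta_n$; assumption (H) ensures $h^\beta(\gamma_{n,h}^{(2)})^{-1}\to 0$ so that the cross term $\omega_n h^{2\beta}$ is dominated by $\omega_n h^\beta$. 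For case (b), use $\omega_n=\sqrt{\log n}/\gamma_{n,h}^{(1)}$ to get $\omega_n h^\beta=\sqrt{\log n}\,(\gamma_{n,h}^{(1)})^{-1}h^\beta$ and $\omega_n\gamma_{n,h}^{(1)}\gamma_{n,h}^{(2)}=\sqrt{\log n}\,\gamma_{n,h}^{(2)}$; the remaining cross bias term $\omega_n h^\beta\varepsilon_n^{(2)}$ is absorbed into $\sqrt{\log n}\,(\gamma_{n,h}^{(1)})^{-1}h^\beta$ by (H).

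There is no real obstacle beyond the careful bookkeeping; the work is essentially arithmetic once the expansion and the concentration bounds are in hand. The one subtlety to be careful about is that Theorem~\ref{ridgeapptheorem}'s hypothesis $\max\{\varepsilon_n^{(0)},\varepsilon_n^{(1)},\varepsilon_n^{(2)}\}\to 0$ must be verified to hold on $\mathcal{E}_n$ under (H); this is immediate because (H) forces $\gamma_{n,h}^{(2)}\to 0$ (and hence $\gamma_{n,h}^{(1)},\gamma_{n,h}^{(0)}\to 0$) and $h\to 0$, so the deterministic piece $h^\beta$ and the probabilistic pieces $\gamma_{n,h}^{(k)}$ both vanish. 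Combining Steps 1--3 gives the claimed probability bound with a suitable absolute constant $C_1$ absorbing $C$, $C'$, and the constants from the identities above.
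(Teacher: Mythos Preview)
Your proposal is correct and follows essentially the same route as the paper: invoke high-probability uniform bounds $\mathbb{P}(\varepsilon_n^{(k)}\geq C_k(\gamma_{n,h}^{(k)}+h^\beta))\leq n^{-\ell}$ (the paper states these as its Lemmas~\ref{biasbound} and~\ref{talabound}, which package the same Talagrand/bias arguments you cite), then plug directly into the expansion~(\ref{ridgenessapprox}) of Theorem~\ref{ridgeapptheorem}. Your write-up is in fact more explicit than the paper's, which simply records the concentration bound and says ``we then obtain (\ref{firstapproximation}) by using (\ref{ridgenessapprox})'' without spelling out the $\omega_n$-arithmetic you have worked through.
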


The suprema of empirical process $Z$ can be approximated by their Gaussian counterparts, as we show in the next proposition. Define a centered Gaussian process $G_P$ indexed by $\mathcal{F}$, which has covariance function
\begin{align*}
\mathbb{E} (G_P(g_1),G_P(g_2)) = \text{Cov} (g_1(X),g_2(X)),\; g_1,g_2\in\mathcal{F}.
\end{align*}
%
%
For two random variables $A$ and $B$ having the same distribution, we denote $A\stackrel{d}{=}B$. Let $\delta_n^+ = \delta_n + (\log n)^{1/8} (\gamma_{n,h}^{(0)})^{1/4})$.
%
%

\begin{proposition}\label{2ndapproxtheorem}
Assume that (F1), (F2), (K1) and (H) hold. Then there exists a random variable $\wt Z \stackrel{d}{=} \sup_{g\in\mathcal{F}} G_P(g)$ such that for some constant $C_1>0$
\begin{align}\label{secondapproximation}
\mathbb{P} \Big(  \Big| \omega_n \sup_{x\in\mathcal{S}}\wh p(x) - \wt Z \Big| > C_1 \delta_n^+   \Big)  = O((\log n)^{9/8} (\gamma_{n,h}^{(0)})^{1/4}).
\end{align}

\end{proposition}

The above result gives an upper bound in probability for the difference of the suprema between the estimated nonridgeness over $\mathcal{S}$ and its Gaussian approximation. We need a comparison between their distributions as a further step towards developing confidence regions for ridges. In other words, we would like to utilize Proposition~\ref{2ndapproxtheorem} to obtain results in the form of 
\begin{align}
\sup_{t \in \mathcal{A}} \Big| \mathbb{P}\Big( \omega_n \sup_{x\in\mathcal{S}}\wh p(x) \leq t\Big) - \mathbb{P}(\wt Z \leq t) \Big| = o(1), \; n\rightarrow\infty, \label{caseatarget}
%
\end{align}
for some subset $\mathcal{A}\subset \mathbb{R}$ that covers the quantiles of the distribution of $\omega_n \sup_{x\in\mathcal{S}}\wh p(x)$. To this end, we need to use an anti-concentration inequality for the suprema of Gaussian processes. In general, for a real valued random variable $\xi$, a positive real number $\epsilon>0$ and a set $\mathcal{A}\subset \mathbb{R}$, we call 
\begin{align*}
\eta(\xi,\epsilon,\mathcal{A}) = \sup_{x\in \mathcal{A}} \mathbb{P}(|\xi - t|\leq \epsilon)
\end{align*}
the concentration function. Anti-concentration inequalities refer to inequalities bounding $\eta(\xi,\epsilon,\mathcal{A})$, which reflects how likely $\xi$ concentrates around a quantity in $\mathcal{A}$. In our problem we focus on the concentration functions for $\xi=\wt Z$. The two types of comparisons between the supremum of the estimated nonridgeness and the extreme value of the Gaussian processes in (\ref{secondapproximation}) and (\ref{caseatarget}) can be linked through the follow lemma, which slightly generalizes Lemma 2.1 in \cite{chernozhukov2016empirical}. Its proof is omitted.
\begin{lemma}\label{Kolmogrovgeneral}
Let $U$, $W$ be real-valued random variables such that $\mathbb{P}(|U-W|>s_1)\leq s_2$ for some constants $s_1,s_2>0$. Then we have for any subset $\mathcal{A}\subset\mathbb{R}$,
\begin{align*}
\sup_{t\in \mathcal{A}} |\mathbb{P}(U\leq t) - \mathbb{P}(W\leq t)| \leq \eta(W,s_1,\mathcal{A})+ s_2.
\end{align*}
\end{lemma}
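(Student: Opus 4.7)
The plan is the standard ``Gaussian-comparison via $\pm s_1$ shift'' argument: bound $\mathbb{P}(U\le t)$ above and below by $\mathbb{P}(W\le t\pm s_1)$ up to the exceptional event $\{|U-W|>s_1\}$, and then absorb the resulting strip probabilities into the concentration function $\eta(W,s_1,\mathcal{A})$. The mild twist here, compared with Lemma~2.1 of \cite{chernozhukov2016empirical}, is that $\mathcal{A}$ may be an arbitrary subset of $\mathbb{R}$, so I must ensure the strip probabilities are compared with $\eta$ evaluated at points of $\mathcal{A}$ itself (not at shifted points that might fall outside $\mathcal{A}$).

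First I would fix an arbitrary $t\in\mathcal{A}$ and write
\[
\mathbb{P}(U\le t)\le \mathbb{P}(U\le t,\,|U-W|\le s_1)+\mathbb{P}(|U-W|>s_1)\le \mathbb{P}(W\le t+s_1)+s_2,
\]
using that $U\le t$ and $|U-W|\le s_1$ force $W\le t+s_1$. Splitting $\{W\le t+s_1\}=\{W\le t\}\cup\{t<W\le t+s_1\}$ gives
\[
\mathbb{P}(U\le t)-\mathbb{P}(W\le t)\le \mathbb{P}(t<W\le t+s_1)+s_2\le \mathbb{P}(|W-t|\le s_1)+s_2,
\]
and since $t\in\mathcal{A}$, the right-hand side is at most $\eta(W,s_1,\mathcal{A})+s_2$.

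For the reverse direction, by the same inclusion but with the roles of upper and lower shifts swapped,
\[
\mathbb{P}(W\le t-s_1)\le \mathbb{P}(W\le t-s_1,\,|U-W|\le s_1)+s_2\le \mathbb{P}(U\le t)+s_2,
\]
so
\[
\mathbb{P}(W\le t)-\mathbb{P}(U\le t)\le \mathbb{P}(t-s_1<W\le t)+s_2\le \mathbb{P}(|W-t|\le s_1)+s_2\le \eta(W,s_1,\mathcal{A})+s_2.
\]
Combining the two one-sided bounds and taking the supremum over $t\in\mathcal{A}$ yields the claim.

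The main (minor) obstacle is the point already flagged above: if one tried to bound $\mathbb{P}(t<W\le t+s_1)$ by $\mathbb{P}(|W-(t+\tfrac{s_1}{2})|\le \tfrac{s_1}{2})$, the center $t+s_1/2$ need not lie in $\mathcal{A}$, so that route would not conclude with $\eta(W,s_1,\mathcal{A})$. Keeping the interval anchored at $t\in\mathcal{A}$ with full width $s_1$ on one side avoids this issue, at the (harmless) cost of a factor absorbed into the same $\eta$. No further ingredient is needed; measure-theoretic monotonicity and the hypothesis $\mathbb{P}(|U-W|>s_1)\le s_2$ suffice.
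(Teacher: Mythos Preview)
Your proof is correct and follows essentially the same route as the paper's own (omitted) argument: split off the exceptional event $\{|U-W|>s_1\}$, use the inclusion $\{U\le t,\ |U-W|\le s_1\}\subset\{W\le t+s_1\}$ (and its mirror for the other direction), and absorb the resulting one-sided strip $\{t<W\le t+s_1\}$ into $\mathbb{P}(|W-t|\le s_1)$ so that the center stays at $t\in\mathcal{A}$. Your remark about anchoring the strip at $t$ rather than at $t+s_1/2$ is exactly the point that makes the generalization to arbitrary $\mathcal{A}$ go through.
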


To get the comparison of distributions in (\ref{caseatarget}), we take $U=\omega_n\sup_{x\in\mathcal{S}}\wh p(x)$ and $W=\wt Z$ when applying the above lemma to convert (\ref{secondapproximation}) into the following form.
\begin{align}\label{boundattempt}
\sup_{t \in \mathcal{A}} \Big| \mathbb{P}\Big( \omega_n \sup_{x\in\mathcal{S}}\wh p(x) \leq t\Big) - \mathbb{P}(\wt Z \leq t) \Big| \leq \eta(\wt Z,C_1 \delta_n^+,\mathcal{A}) + (\log n)^{9/8} (\gamma_{n,h}^{(0)})^{1/4}.
\end{align}
Then the question is to show the concentration $\eta(\wt Z,C_1 \delta_n^+,\mathcal{A})$ is small. Typical anti-concentration inequalities for the supremum of a Gaussian process (Lemma A.1 in \citealt{chernozhukov2014gaussian}, for example) requires that the infimum variance of the Gaussian process over the index set is bounded away from zero, which does not hold in our problem. As shown in the following lemma, the infimum variance of the Gaussian processe $G_P$ is zero, although their supremum variances are strictly positive. Let $\bar\sigma^2=\sup_{g\in \mathcal{F}}\text{Var}(G_P(g))$ and $\ubar\sigma^2=\inf_{g\in \mathcal{F}}\text{Var}(G_P(g))$. 

\begin{lemma}\label{sigmah2bounds}
%
Assume that (F1), (F2), (K1) and (K2) hold. Then we have $\ubar\sigma^2=0$ and when $h$ is small enough there exist constants $C_1,C_2>0$ such that
\begin{align}\label{barsignabound}
C_1<\bar\sigma^2< C_2.
\end{align}
\end{lemma}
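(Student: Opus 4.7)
The plan is to compute $\text{Var}(g_{x,z}(X))$ directly. The substitution $u=(x-y)/h$ yields
\begin{equation*}
\text{Var}(g_{x,z}(X)) \;=\; \int \bigl[a(x,z)^\top \psi(u)\bigr]^2 f(x - hu)\, du \;-\; O(h^d),
\end{equation*}
where $(a,\psi)=(M(x)z,\,d^2K)$ in case (a) and $(L(x)z,\,\nabla K)$ in case (b); the remainder is $(\mathbb{E} g_{x,z}(X))^2=O(h^d)$. Both halves of the lemma then reduce to a study of $a(x,z)$ together with assumption (K2). A preliminary simplification of (\ref{MhTexpress}) is useful: applying the Kronecker identity $(A\otimes B^\top)\mathbf{D}\,\text{vech}(H)=A H B$ (valid for any symmetric $H$, $A\in\mathbb{R}^{d\times d}$, $B\in\mathbb{R}^d$) gives the workhorse identity
\begin{equation*}
M(x)^\top \text{vech}(H) \;=\; \sum_{j\geq q_r+1,\,k\leq q_r}\nu_{jk}(x)\,P_j(x)\,H\,P_k(x)\,\nabla f(x).
\end{equation*}

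For $\ubar{\sigma}^2=0$ and the upper bound $\bar{\sigma}^2<C_2$: to produce $\ubar{\sigma}^2=0$, fix any $x\in\mathcal{S}$ and set $z=v_1(x)$. In case (b), $z$ is orthogonal to $\text{range}(L(x))$, so $L(x)z=0$. In case (a), since $v_1$ lies in a different eigenspace than any $P_j$ with $j\geq q_r+1$, the workhorse identity shows $z^\top M(x)^\top \text{vech}(H)=0$ for every symmetric $H$, whence $M(x)z=0$. In either case $g_{x,z}\equiv 0$, so its variance is zero. For the upper bound, $\|a(x,z)\|$ is uniformly bounded on $\mathcal{S}$: by $\|L\|_{\text{op}}\leq 1$ in case (b); and in case (a) by $|\nu_{jk}|\leq e_0^{-1}$ (the gap condition in (F2)), $\|P_j\|_{\text{op}}\leq 1$, and $\sup_{\mathcal{H}}\|\nabla f\|<\infty$ (from (F1)). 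Combining with $\sup_{\mathcal{H}}f<\infty$ and $\int\|\psi\|^2\,du<\infty$ gives a finite upper bound.

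The main obstacle is the strictly positive lower bound $\bar{\sigma}^2>C_1$, for which I need a concrete witness $(x_0,z_0)$ whose variance is bounded below. For case (b) take any $x_0\in\mathcal{S}$ and $z_0=v_d(x_0)$, so $L(x_0)z_0=z_0$ is a unit vector; (K2) yields $\int_{\mathcal{B}}(z_0^\top \nabla K(u))^2\,du>0$, and combining with $f(x_0-hu)\geq c_0$ for $h$ small (valid since $\mathcal{S}\oplus\delta_0\subset\mathcal{H}$ by (F2), $\inf_{\mathcal{H}}f\geq c_0$ by (F1), and $K$ has compact support) bounds the variance below. For case (a), pick $x_0\in\mathcal{S}$ with $\nabla f(x_0)\neq 0$ (guaranteed by the definition of case (a)) and again $z_0=v_d(x_0)$. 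Since $v_d$ lies in the eigenspace of $\mu_q$, one has $z_0^\top P_j=\delta_{j,q}z_0^\top$, and the workhorse identity collapses to $z_0^\top M(x_0)^\top \text{vech}(H) = \sum_{k\leq q_r}\nu_{qk}(x_0)\,z_0^\top H P_k(x_0)\nabla f(x_0)$. Because $\nabla f=\sum_{k\leq q_r}P_k\nabla f$ on $\mathcal{S}$, some $w := P_{k_0}(x_0)\nabla f(x_0)\neq 0$ for a $k_0\leq q_r$. Testing against the symmetric matrix $H_0 = z_0 w^\top + w z_0^\top$ and using $z_0^\top P_k=0$ for $k\leq q_r$ together with $P_{k_0}P_k=\delta_{k,k_0}P_{k_0}$, a short calculation gives $z_0^\top M(x_0)^\top \text{vech}(H_0) = \nu_{q,k_0}(x_0)\,\|w\|^2\neq 0$. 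Hence $M(x_0)z_0\neq 0$, and (K2) gives $\int_{\mathcal{B}}\bigl((M(x_0)z_0)^\top d^2K(u)\bigr)^2\,du>0$; the same lower bound $f\geq c_0$ then completes the argument, with the $O(h^d)$ correction absorbed for $h$ small.
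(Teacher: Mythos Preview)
Your proof is correct. The treatment of $\ubar\sigma^2=0$ and of the upper bound $\bar\sigma^2<C_2$ matches the paper's argument essentially verbatim (same choice $z\in\mathcal{V}^\perp(x)$, same use of the eigengap in (F2) and the boundedness of $f$ and the kernel moments).

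For the lower bound $\bar\sigma^2>C_1$ your route is genuinely different and more elementary. The paper fixes $x_0$ with $\nabla f(x_0)\neq 0$, works with the whole slice $z\in\mathbb{S}_{d-1}\cap\mathcal{V}(x_0)$, rewrites $z^\top M(x_0)^\top$ as $a^\top B(x_0)^\top\mathbf{D}$, and then bounds the resulting quadratic form below by $\lambda_{\min}(\mathbf{R})\,\|\mathbf{D}^\top B(x_0)\|_{\text{op}}^2$, finally invoking the commutation matrix $K_{d^2}$ and the relation $(\mathbf{D}^+)^\top\mathbf{D}^\top=\tfrac12(\mathbf{I}_{d^2}+K_{d^2})$ to show this is positive. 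You instead exhibit a single concrete witness $(x_0,z_0)$ with $z_0=v_d(x_0)$, collapse the workhorse identity to the $j=q$ term, and verify $M(x_0)z_0\neq 0$ by testing against the explicit symmetric matrix $H_0=z_0w^\top+wz_0^\top$; (K2) then turns this into a positive $\int_{\mathcal{B}}(\cdot)^2\,du$. Your argument avoids the duplication/commutation machinery entirely and is shorter; the paper's route, on the other hand, yields an explicit lower constant in terms of $\lambda_{\min}(\mathbf{R})$, $\|(\mathbf{D}^+)^\top\|_{\text{op}}$, $\nu_{1q}(x_0)$ and $\|\nabla f(x_0)\|$, which is not needed for the lemma but is reused later (e.g.\ in the proof of Lemma~\ref{wtzhquantile}).
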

\begin{remark}
{\em The Gaussian process $G_P$ we are dealing with is degenerate with zero variance at some points, for the following two reasons. 

(i) Recall that $L(x)\nabla f(x)=0$ for a ridge point $x$, where $L(x)\nabla f(x)$ consists of $d$ functions but at most $d-r$ of them are linearly independent. The dependence of the functions in $L(x)\nabla f(x)$ brings certain redundancy to the index sets $\mathcal{F}$ and $\mathcal{S}$, which results in zero infimum variance of the Gaussian processes. We take case (a) as an example. Recall that for $g\in\mathcal{F}$, we can write $g(\cdot)=g_{x,z}(\cdot)=h^{-d/2}z^\top  M(x)^\top  d^2K(\frac{x-\cdot}{h})$ for some $x\in\mathcal{S}$ and $z\in\mathbb{S}_{d-1}$, where $M$ is given in (\ref{MhTexpress}). Let $\mathcal{V}^\perp(x)$ denote the subspace spanned by $v_{1}(x),\cdots,v_{r}(x)$, and $\mathcal{V}(x)$ denote the subspace spanned by $v_{r+1}(x),\cdots,v_{d}(x)$. Then for any fixed $x\in\mathcal{S}$, $\mathcal{V}^\perp(x)$ and $\mathcal{V}(x)$ are orthogonal subspaces. Notice that $M(x)^\top  d^2K(\frac{x-\cdot}{h})\in \mathcal{V}(x)$, and consequently $g_{x,z}\equiv 0$ for $z\in \mathbb{S}_{d-1}\cap \mathcal{V}^\perp(x)$. Hence $\text{Var}(G_P(g))$ is zero for some $g\in\mathcal{F}$. For case (b), a similar reason for $\ubar \sigma^2=0$ can be given. 

 (ii) There is another important circumstance when $\ubar\sigma^2=0$ for case (a): if the ridge $\mathcal{S}$ contains a critical point $x$, then we have $\nabla f(x)=0$ and hence $g_{x,z}\equiv 0$ for all $z\in\mathbb{S}_{d-1}$. 
}\end{remark}
 
As stated in the following lemma, we have a useful anti-concentration inequality that only requires the supremum variance of the Gaussian process to be positive, which is applicable to our problem because we show $\bar\sigma^2$ is positive in Lemma~\ref{sigmah2bounds}. We use $\Psi$ to denote the cumulative distribution function of a standard normal random variable.
 %
%

\begin{lemma}\label{newanticoncentration}
Let $T$ be a non-empty set, and let $\ell^\infty(T)$ be the set of all bounded functions on $T$ endowed with the sup-norm. Let $W(t),t\in T$ be a centered tight Gaussian random element in $\ell^\infty(T)$ such that $\bar\sigma_W^2:=\sup_{t\in T}\text{Var}(W(t))>0$. Let $\bar\sigma_{W,0} = \bar\sigma \Phi^{-1}(0.95)$ and $\mathcal{A}=[\bar\sigma_{W,0},\infty)$. Define $d(s,t):=\sqrt{\mathbb{E}[(W(t)-W(s))^2]}$, $s,t\in T$, and for $\delta>0$, define $\phi(\delta):=\mathbb{E}[\sup_{(s,t)\in T_\delta}|W(t)-W(s)|]$, where $T_\delta=\{(s,t):d(s,t)\leq\delta\}$. Then for every $0<\epsilon<\frac{1}{16}\bar\sigma_{W,0}$, 
\begin{align}\label{anticoncnew}
&\eta\Big(\sup_{t\in T} W(t), \epsilon, \mathcal{A}\Big) \nonumber\\
%
\leq & \inf_{\substack{\delta,\kappa>0\\\phi(\delta) + \kappa\delta\leq \bar\sigma_{W,0}/16 }} \Big\{ 2(1/\bar\sigma) (\epsilon + \phi(\delta) + \kappa\delta) \sqrt{\log N(T,d,\delta)} (\sqrt{2\log N(T,d,\delta)}+3) + e^{-\kappa^2/2}\Big\}.
\end{align}
\end{lemma}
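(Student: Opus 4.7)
The plan is to reduce the supremum of $X$ over $T$ to a maximum over a finite $\delta$-net, control the discretization error by Borell--TIS concentration, and then apply a finite-dimensional anti-concentration inequality that does not require the minimum coordinate variance to be positive. Fix admissible $\delta,\kappa>0$ with $\phi(\delta)+\kappa\delta\le \bar\sigma_{X,0}/16$, abbreviate $\bar\sigma:=\bar\sigma_X$, and let $\hat T\subset T$ be a $\delta$-net in the pseudometric $d$ of cardinality $N:=N(T,d,\delta)$. For each $t\in T$ fix $\pi(t)\in\hat T$ with $d(t,\pi(t))\le\delta$, and write $M:=\sup_{t\in T}X(t)$, $M_N:=\max_{t\in\hat T}X(t)$, and $\Xi:=\sup_{(s,t)\in T_\delta}|X(t)-X(s)|$. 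Clearly $M_N\le M\le M_N+\Xi$. The centered Gaussian process $(X(t)-X(s))_{(s,t)\in T_\delta}$ has pointwise standard deviation at most $\delta$ and $\mathbb{E}\Xi=\phi(\delta)$, so Borell--TIS yields $\mathbb{P}(\Xi>\phi(\delta)+\kappa\delta)\le e^{-\kappa^2/2}$. Setting $\epsilon':=\epsilon+\phi(\delta)+\kappa\delta$, the event $\{|M-u|\le\epsilon\}\cap\{\Xi\le\phi(\delta)+\kappa\delta\}$ forces $|M_N-u|\le\epsilon'$; hence uniformly in $u$,
$$
\mathbb{P}(|M-u|\le\epsilon)\ \le\ \mathbb{P}(|M_N-u|\le\epsilon')+e^{-\kappa^2/2}.
$$

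The crux is a finite-dimensional anti-concentration bound without a positive lower bound on the variances. Enumerate $\hat T=\{t_1,\dots,t_N\}$, let $Y_i:=X(t_i)$, $\sigma_i:=\sqrt{\mathrm{Var}(Y_i)}$, and note $\max_i\sigma_i\le\bar\sigma$ while $\min_i\sigma_i$ may vanish. The target is
$$
\sup_{u\in\mathcal{A}}\mathbb{P}(|\max_i Y_i-u|\le\epsilon')\ \le\ \frac{2\epsilon'}{\bar\sigma}\sqrt{\log N}\bigl(\sqrt{2\log N}+3\bigr).
$$
The main obstacle is the possible degeneracy $\sigma_i=0$, which invalidates the standard Nazarov / Chernozhukov--Chetverikov--Kato anti-concentration bound (whose constant blows up as $\underline\sigma\to0$). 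The idea is to leverage the restriction $u\ge \bar\sigma\Phi^{-1}(0.95)$ to quarantine the low-variance coordinates. Partition $\{1,\ldots,N\}=I_+\sqcup I_-$ with $I_+:=\{i:\sigma_i^2\ge\bar\sigma^2/(2\log N)\}$; since $\epsilon'\le \bar\sigma_{X,0}/8$ we have $u-\epsilon'\ge (7/8)\bar\sigma\Phi^{-1}(0.95)>\bar\sigma$, so for $i\in I_-$ the Gaussian tail bound gives $\mathbb{P}(Y_i\ge u-\epsilon')\le \exp(-c\log N)$, and a union bound over $|I_-|\le N$ makes the contribution of indices in $I_-$ to the left-hand side of order $N^{-c}=o(\epsilon'/\bar\sigma)$. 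On $I_+$ the minimum standard deviation is at least $\bar\sigma/\sqrt{2\log N}>0$, so the classical Nazarov-type density bound applies to $\max_{i\in I_+}Y_i$ and, combined with the pointwise bound $\phi_{\sigma_i}(v)\le C/\bar\sigma$ valid for $v\gtrsim\bar\sigma$, produces the announced $\sqrt{\log N}(\sqrt{2\log N}+3)/\bar\sigma$ density scaling after integrating over $[u-\epsilon',u+\epsilon']$.

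Combining the chaining reduction with the finite-dimensional anti-concentration bound and taking the infimum over admissible pairs $(\delta,\kappa)$ subject to $\phi(\delta)+\kappa\delta\le\bar\sigma_{X,0}/16$ yields \eqref{anticoncnew}. Everything outside of Step 2 amounts to standard chaining and Gaussian concentration; the novelty, and the hard part, lies in trading the impossible uniform bound $\underline\sigma>0$ for the window restriction $u\in\mathcal{A}$, which stochastically decouples the maximum from the low-variance coordinates and reduces the problem to one where only a sub-vector with variances comparable to $\bar\sigma$ contributes.
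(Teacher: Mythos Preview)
Your reduction step --- passing to a finite $\delta$-net, controlling the discretization error $\Xi$ by Borell--TIS, and splitting the probability as $\mathbb{P}(|M-u|\le\epsilon)\le \mathbb{P}(|M_N-u|\le\epsilon')+\mathbb{P}(\Xi>\phi(\delta)+\kappa\delta)$ --- is correct and is exactly what the paper does. The paper then invokes Theorem~3.2 of \cite{belloni2018high} as a black box to obtain the finite-dimensional anti-concentration inequality
\[
\sup_{u\in\mathcal{A}}\mathbb{P}\big(|\max_{1\le j\le N}X(t_j)-u|\le \epsilon'\big)\ \le\ \tfrac{2}{\bar\sigma}\,\epsilon'\sqrt{\log N}\big(\sqrt{2\log N}+3\big),
\]
so the whole lemma is a three-line corollary once that result is available.

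Your substitute argument for this finite-dimensional step has a genuine gap. After partitioning into $I_-=\{i:\sigma_i<\bar\sigma/\sqrt{2\log N}\}$ and $I_+$, the union bound on $I_-$ yields an additive term of order $N^{1-c}$ with $c\approx 2.07$, i.e.\ roughly $N^{-1.07}$. You then assert ``$N^{-c}=o(\epsilon'/\bar\sigma)$'', but the lemma is a non-asymptotic inequality with no constraint linking $N$ and $\epsilon'$: for fixed $N$ one may take $\epsilon'$ (equivalently $\epsilon,\phi(\delta),\kappa\delta$) arbitrarily small, and then $N^{-1.07}$ is \emph{not} dominated by $(2/\bar\sigma)\epsilon'\sqrt{\log N}(\sqrt{2\log N}+3)$. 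So your argument only establishes a weaker inequality with an extra additive $N^{-c}$, not the one stated. (That weaker version would in fact suffice for the paper's downstream applications, where $N(T,d,\delta)\to\infty$ polynomially, but it does not prove the lemma as written.)

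Ironically, the observation you make in passing --- that for $v>\bar\sigma$ the marginal density satisfies $\phi_{\sigma_i}(v)=(1/\sigma_i)\phi(v/\sigma_i)\le (1/\bar\sigma)\phi(v/\bar\sigma)$ uniformly in $\sigma_i\in(0,\bar\sigma]$, because $\sigma\mapsto \phi_\sigma(v)$ is increasing on $(0,v]$ --- is precisely the mechanism that lets one prove the finite-dimensional bound \emph{without} a partition and without an extraneous additive term. Feeding this uniform density bound into a Nazarov-type argument (rather than splitting off $I_-$ and union-bounding) is how the Belloni--Oliveira inequality eliminates the dependence on $\underline\sigma$; your partition step throws away this tightness.
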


In the above lemma, small values less than $\bar\sigma_{W,0}$ are not included in $\mathcal{A}$ so that we only consider the event that $\sup_{t\in T} W(t)$ is strictly above a positive level. It is unlikely for $W(t)$ where the variance is nearly zero to impact this event, because the supremum of $W(t)$ is most likely achieved at $t$ where the variance is not small. Applying the above anti-concentration inequality in (\ref{anticoncnew}) and showing that the right-hand side is small in our problem, we are able to compare the distributions of the suprema of the estimated nonridgeness and their Gaussian approximations, given in the following theorem. Let $\bar\sigma_{0} = \bar\sigma\Phi^{-1}(0.95)$. 
%

\begin{theorem}\label{gaussianappanti}
Assume that (F1), (F2), (K1), (K2) and (H) hold. Then we have
\begin{align}\label{thirdapproximation}
\sup_{t\geq \bar\sigma_{0} } \Big| \mathbb{P}\Big( \omega_n \sup_{x\in\mathcal{S}}\wh p(x) \leq t\Big) - \mathbb{P}(\wt Z \leq t) \Big| = O( (\log n)\delta_n^+ ).
\end{align}
\end{theorem}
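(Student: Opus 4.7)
The plan is to chain together Theorem~\ref{2ndapproxtheorem}, Lemma~\ref{Kolmogrovgeneral}, and the new anti-concentration inequality in Lemma~\ref{newanticoncentration}. The only genuinely novel ingredient beyond routine bookkeeping is an entropy/modulus control for the class $\mathcal{F}$ that makes the anti-concentration bound effective.

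First, I would apply Lemma~\ref{Kolmogrovgeneral} with $U = \omega_n \sup_{x\in\mathcal{S}} \wh p(x)$, $W = \wt Z$, $\mathcal{A} = [\bar\sigma_0, \infty)$, $s_1 = C_1 \delta_n^+$ and $s_2 = O((\log n)^{9/8} (\gamma_{n,h}^{(0)})^{1/4})$ supplied by (\ref{secondapproximation}). This reduces the task to bounding $\eta(\wt Z, C_1 \delta_n^+, \mathcal{A})$; and since $\delta_n^+ \geq (\log n)^{1/8}(\gamma_{n,h}^{(0)})^{1/4}$ by construction, the $s_2$ contribution is already dominated by $(\log n)\delta_n^+$, so it will not drive the final rate.

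Second, I would apply Lemma~\ref{newanticoncentration} to the Gaussian process $G_P$ indexed by $\mathcal{F}$. By Lemma~\ref{sigmah2bounds}, $\bar\sigma^2 \geq C_1 > 0$ uniformly in $h$, so the hypothesis $\bar\sigma_X > 0$ is met. The crux is controlling the covering number $N(\mathcal{F}, d, \delta)$ under the $L_2(P)$ pseudo-metric $d(g_1, g_2) = \sqrt{\mathrm{Var}(g_1(X) - g_2(X))}$ and the modulus $\phi(\delta)$. Using (K1) (Lipschitz second derivatives of $K$), the $C^1$-smoothness of $x \mapsto M(x)$ and $x \mapsto L(x)$ inherited from (F1)--(F2) via standard eigenprojection perturbation bounds, and the compactness of $\mathcal{S}$ and $\mathbb{S}_{d-1}$, the class $\mathcal{F}$ is VC-type: after absorbing the $h^{-d/2}$ normalization by a change of variables, one obtains $N(\mathcal{F}, d, \delta) \leq (A/\delta)^v$ for constants $A, v$ independent of $h$. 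Dudley's entropy bound then yields $\phi(\delta) \lesssim \delta\sqrt{\log(A/\delta)}$. I would then set $\kappa = C\sqrt{\log n}$ for large $C$ (making $e^{-\kappa^2/2}$ polynomially negligible) and $\delta \asymp \delta_n^+/\sqrt{\log n}$. Under (H), $\delta_n^+ \to 0$, so the feasibility constraint $\phi(\delta) + \kappa\delta \leq \bar\sigma_0/16$ and the range condition $\epsilon = C_1\delta_n^+ < \bar\sigma_0/16$ both hold for large $n$. Plugging into (\ref{anticoncnew}), the leading term is of order $(\epsilon + \phi(\delta) + \kappa\delta)\log N(\mathcal{F}, d, \delta) \lesssim \delta_n^+ \cdot \log n$, delivering $\eta(\wt Z, C_1\delta_n^+, \mathcal{A}) = O((\log n)\delta_n^+)$; adding the $s_2$ contribution preserves this rate.

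The main obstacle is the VC-type bound on $N(\mathcal{F}, d, \delta)$ with constants uniform in $h$. The reduction to a Lipschitz class on $\mathcal{S}\times\mathbb{S}_{d-1}$ requires uniform Lipschitzness of the eigenprojections $P_j$ and of $S_j$ in $x$, which in turn needs the uniform spectral gap $\lambda_r(x) - \lambda_{r+1}(x) > e_0$ from (F2); without it the $\nu_{jk}(x)$ factors in (\ref{sjxexpress})--(\ref{MhTexpress}) could blow up and destroy the entropy bound. A secondary subtlety is that $d$ is the intrinsic $L_2(P)$ metric rather than a sup-norm metric, but (F1) together with a change of variables $u = (x-y)/h$ shows $\mathbb{E}[g_{x,z}(X)^2]$ and differences thereof are comparable to their natural Euclidean counterparts uniformly in $h$, giving the required comparison of metrics.
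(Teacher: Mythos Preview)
Your approach is essentially the paper's: reduce via Lemma~\ref{Kolmogrovgeneral} to an anti-concentration bound, then invoke Lemma~\ref{newanticoncentration} with entropy and modulus control for $\mathcal{F}$. The only slip is your claim that the covering number $N(\mathcal{F},d,\delta)\le (A/\delta)^v$ holds with $A$ \emph{independent of $h$}. The change of variables $u=(x-y)/h$ makes the $L_2(P)$-norms of individual $g_{x,z}$ of order one, but it does not make the parametrization $(x,z)\mapsto g_{x,z}$ Lipschitz with $h$-free constant: the term $d^2K((x_1-y)/h)-d^2K((x_2-y)/h)$ contributes an $L_2(P)$ increment of order $\|x_1-x_2\|/h$, so to cover $\mathcal{F}$ at scale $\delta$ one must cover $\mathcal{S}$ at Euclidean scale $h\delta$, forcing $N(\mathcal{F},d,\delta)\lesssim (A_0/(h^{d/2}\delta))^{v_0}$ as in the paper's (\ref{cvbound}). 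Fortunately this does not break your argument: under (H) one has $\log(1/h)\asymp\log n$, so $\log N(\mathcal{F},d,\delta)\lesssim \log n+\log(1/\delta)$, and with your choice $\delta\asymp\delta_n^+/\sqrt{\log n}$ this is still $O(\log n)$. The paper makes exactly this observation (``by assumption (H), $\log n\simeq\log(n/h^{d/2})$'') and instead takes the cruder choice $\delta=1/n$, $\kappa=\sqrt{\log n}$; both tunings yield the target bound $(\log n)\delta_n^+$.
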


In the above theorem the comparisons of the distributions are for $t\geq \bar\sigma_{0}$. Based on Lemma~\ref{sigmah2bounds}, it is clear that $\bar\sigma_{0}$ is a positive finite number. To utilize the distributions of the suprema of the approximating Gaussian processes to establish confidence regions for ridges, we are interested in the range of $t$ corresponding to the $(1-\alpha)$-quantiles of $\wt Z$ for all $\alpha\in (0,1)$. Then a natural question is where $\bar\sigma_{0}$ is located as quantiles of the distributions of $\wt Z$. This can be answered by the following lemma.

\begin{lemma}\label{wtzhquantile}
Assume that (F1), (F2), (F4), (K1) and (K2) hold. There exist constants $0<C_1<C_2<\infty$, and $0<C_3<\infty$ such that when $h$ is small enough,
\begin{align}
\mathbb{P}(C_1\sqrt{|\log h|} \leq \wt Z \leq  C_2\sqrt{|\log h|} ) \leq  1 - h^{C_3}.\label{gaussianexp1}
%
\end{align}
\end{lemma}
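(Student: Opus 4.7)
The plan is to recognize that the sandwich event is contained in $\{\wt Z \leq C_2\sqrt{|\log h|}\}$, so the lemma reduces to establishing the one-sided lower bound
\begin{align*}
\mathbb{P}(\wt Z > C_2\sqrt{|\log h|}) \geq h^{C_3}.
\end{align*}
Once this is in hand, $\mathbb{P}(C_1\sqrt{|\log h|}\le \wt Z\le C_2\sqrt{|\log h|})\le 1-h^{C_3}$ for any $0<C_1<C_2$.

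To obtain the right-tail lower bound I would bound $\wt Z$ from below by a single Gaussian coordinate of near-maximal variance. Since $\wt Z = \sup_{g\in\mathcal{F}} G_P(g)$, for any $g_\star \in \mathcal{F}$, $\wt Z \geq G_P(g_\star)$ and $G_P(g_\star)\sim N(0,\sigma_\star^2)$ with $\sigma_\star^2 := \text{Var}(G_P(g_\star))$. By Lemma~\ref{sigmah2bounds} (which uses (F1), (F2), (K1), (K2)), there are constants $0<c_1\le c_2<\infty$ with $\bar\sigma^2\in[c_1,c_2]$ for all sufficiently small $h$. The index set $\mathcal{S}\times\mathbb{S}_{d-1}$ is compact and the map $(x,z)\mapsto\text{Var}(G_P(g_{x,z}))$ is continuous (continuity of $M(x)$ and $L(x)$ follows from the eigenvalue gap in (F2) together with the smoothness in (F1)); therefore the supremum is attained at some $g_\star$ with $\sigma_\star^2=\bar\sigma^2\ge c_1$.

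Applying the standard Gaussian tail lower bound $1-\Phi(x)\ge \tfrac{x}{1+x^2}\phi(x)$ at $x=C_2\sqrt{|\log h|}/\sigma_\star$ then yields
\begin{align*}
\mathbb{P}\bigl(G_P(g_\star)>C_2\sqrt{|\log h|}\bigr) \;\gtrsim\; \frac{1}{\sqrt{|\log h|}}\exp\!\Big(-\tfrac{C_2^2|\log h|}{2\sigma_\star^2}\Big) \;\ge\; \frac{h^{C_2^2/(2c_1)}}{\sqrt{|\log h|}}.
\end{align*}
For any $C_3>C_2^2/(2c_1)$ and $h$ sufficiently small, the right-hand side is at least $h^{C_3}$, which together with $\wt Z\ge G_P(g_\star)$ gives the required lower bound on $\mathbb{P}(\wt Z > C_2\sqrt{|\log h|})$.

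I do not foresee a serious technical obstacle. The only bookkeeping items are (i) the compactness/continuity argument that attains $\bar\sigma^2$ (and in fact not essential, since a near-maximizer with variance at least $c_1/2$ yields the same conclusion with a modestly larger $C_3$), and (ii) tracking the constants so that $C_3$ depends only on $C_2$ and the lower variance bound $c_1$ from Lemma~\ref{sigmah2bounds}. Assumption (F4), listed in the hypothesis, is not invoked by this one-sided approach; its role would be to enable a Sudakov-type lower bound on $\mathbb{E}[\wt Z]$ should one wish to complement this with a matching control of the left tail.
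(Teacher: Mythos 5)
Your argument is literally correct for the inequality as printed, but that inequality is almost certainly a typo: \eqref{gaussianexp1} should read $\mathbb{P}(C_1\sqrt{|\log h|} \leq \wt Z \leq C_2\sqrt{|\log h|}) \geq 1 - h^{C_3}$, i.e., a two-sided concentration statement. This is clear from the sentence immediately after the lemma (``for any fixed $\alpha$, the $(1-\alpha)$ quantile of $\wt Z$ exceeds $\bar\sigma_0$''), from how the lemma is invoked in the proofs of Corollary~\ref{theoryci} and Theorem~\ref{multiplierquantile} (both use it to conclude $\wt t(1-\alpha)\gtrsim \omega_n^{-1}\sqrt{|\log h|}$, which needs a lower bound on $\wt Z$ with high probability, not a lower bound on the upper tail), and from the paper's proof, which establishes both $\mathbb{P}(\wt Z \leq C_1\sqrt{|\log h|}) \lesssim h^{C_3}$ and $\mathbb{P}(\wt Z \geq C_2\sqrt{|\log h|}) \lesssim h^{C_3}$ and combines them.

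With the literal $\leq$ direction, the lemma would be nearly vacuous: it just says the sandwich event is not almost sure, and your reduction to $\mathbb{P}(\wt Z > C_2\sqrt{|\log h|}) \geq h^{C_3}$ followed by the single-coordinate Gaussian tail bound does make that step rigorous. But that is not what is needed downstream, and it leaves the actual content of the lemma unproved. You flagged the right diagnostic yourself: your argument never uses (F4). In the paper's proof, (F4) supplies the packing-number lower bound $N_p(\mathcal{S}_{x_0,r_0},\|\cdot\|,\eta)\gtrsim\eta^{-\phi_0}$, which feeds into a Sudakov minoration to get $\mathbb{E}\wt Z \gtrsim\sqrt{|\log h|}$; Borell's inequality then converts this into the left-tail bound $\mathbb{P}(\wt Z\le \tfrac12\mathbb{E}\wt Z)\lesssim h^{C_3}$. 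The matching upper tail uses a Dudley entropy bound to show $\mathbb{E}\wt Z\lesssim\sqrt{|\log h|}$, again followed by Borell. Both pieces are essential; your proposal covers neither, so you would need to redo the argument along these lines to match the lemma's intended use.
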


The implication of this lemma is that for any fixed $\alpha$, the $(1-\alpha)$-quantile of $\wt Z$ exceeds $\bar\sigma_{0}$ for small $h$. As a result, for large $n$ (and hence small $h$) the range of small $t$ excluded in Theorem~\ref{gaussianappanti} has little impact on comparing fixed quantiles of the distributions of the suprema of the nonridgeness and its Gaussian approximation.

The next corollary gives theoretical confidence regions for $\mathcal{S}$ based on the Gaussian approximation developed above. Let $t_{1-\alpha}$ be the $(1-\alpha)$-quantile of distribution of $\omega_n^{-1} \wt Z$. 
%

\begin{corollary}\label{theoryci}
Assume that (F1) - (F4), (K1), (K2) and (H) hold. Then 
\begin{align}\label{theorycia}
\mathbb{P}(\mathcal{S}\subset \wh{\mathcal{S}}(t_{1-\alpha})) = (1-\alpha) + O( (\log n)\delta_{n}^+ ).
\end{align}
\end{corollary}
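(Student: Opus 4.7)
The plan is to decompose the event $\{\mathcal{S}\subset \wh{\mathcal{S}}(t_{1-\alpha})\}$ as the intersection of the nonridgeness event $A_n := \{\sup_{x\in\mathcal{S}} \wh p(x) \leq t_{1-\alpha}\}$ and the sign event $B_n := \{\sup_{x\in\mathcal{S}} \wh\lambda_{r+1}(x) < 0\}$, because $x\in \wh{\mathcal{S}}(t_{1-\alpha})$ is by definition the conjunction of $\wh p(x)\leq t_{1-\alpha}$ and $\wh\lambda_{r+1}(x)<0$. From $|\mathbb{P}(A_n\cap B_n)-\mathbb{P}(A_n)|\leq \mathbb{P}(B_n^\complement)$, the task reduces to showing $\mathbb{P}(A_n)=1-\alpha+o(1)$ together with $\mathbb{P}(B_n^\complement)=o(1)$.

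The sign event is handled via Weyl's perturbation inequality: $\sup_{x\in\mathcal{H}}|\wh\lambda_{r+1}(x)-\lambda_{r+1}(x)|\leq \sup_{x\in\mathcal{H}}\|\nabla^2\wh f(x)-\nabla^2 f(x)\|_{\text{op}}\leq \sqrt{\varepsilon_n^{(2)}}$. Assumption (F2) gives $\lambda_{r+1}(x)\leq -e_0$ on $\mathcal{S}\subset \mathcal{P}_0\oplus \delta_0$, and assumption (H) combined with Remark~\ref{ridgeleading} forces $\varepsilon_n^{(2)}\to 0$ in probability (directly for case~(a) via $\gamma_{n,h}^{(2)}\to 0$, and via $\gamma_{n,h}^{(2+\epsilon)}\to 0$ together with $h^\beta\to 0$ for case~(b)). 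Hence $\mathbb{P}(B_n^\complement)\to 0$.

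For the nonridgeness event, since $t_{1-\alpha}$ is the $(1-\alpha)$-quantile of $\omega_n^{-1}\wt Z$, the scaled threshold $\omega_n t_{1-\alpha}$ is the $(1-\alpha)$-quantile of $\wt Z$ itself. Lemma~\ref{wtzhquantile} shows that $\wt Z\geq C_1\sqrt{|\log h|}$ with probability at least $1-h^{C_3}$, so its $(1-\alpha)$-quantile diverges as $h\to 0$ and exceeds the fixed constant $\bar\sigma_0$ for all large $n$. Theorem~\ref{gaussianappanti} is then applicable at $t=\omega_n t_{1-\alpha}$, yielding
\begin{align*}
\bigl|\mathbb{P}(A_n) - \mathbb{P}(\wt Z\leq \omega_n t_{1-\alpha})\bigr| = O\bigl((\log n)\delta_n^+\bigr) = o(1),
\end{align*}
where the final equality follows from assumption (H). The supremum of a separable centered Gaussian process with $\bar\sigma^2>0$ (Lemma~\ref{sigmah2bounds}) has an absolutely continuous distribution, so $\mathbb{P}(\wt Z\leq \omega_n t_{1-\alpha})=1-\alpha$ exactly, and hence $\mathbb{P}(A_n)=1-\alpha+o(1)$. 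Combining with $\mathbb{P}(B_n^\complement)=o(1)$ delivers \eqref{theorycia}.

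The main obstacle is ensuring that the Kolmogorov comparison in Theorem~\ref{gaussianappanti}, which is only valid on $[\bar\sigma_0,\infty)$, can be invoked at the specific level $\omega_n t_{1-\alpha}$; this is precisely the role of Lemma~\ref{wtzhquantile} and explains why assumption (F4) is needed in the hypothesis of the corollary even though it plays no role in the earlier approximation steps.
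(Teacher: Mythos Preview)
Your proposal is correct and follows essentially the same route as the paper: decompose $\{\mathcal{S}\subset \wh{\mathcal{S}}(t_{1-\alpha})\}$ into the nonridgeness event and the sign event, control the latter via Weyl's inequality together with assumption~(F2) and the uniform rates for second-derivative estimation, and handle the former by invoking Lemma~\ref{wtzhquantile} to guarantee $\omega_n t_{1-\alpha}\geq \bar\sigma_0$ so that Theorem~\ref{gaussianappanti} applies at the relevant level. Your added remark on the absolute continuity of the law of $\wt Z$ makes explicit a point the paper leaves implicit when equating $\mathbb{P}(\wt Z\leq \omega_n t_{1-\alpha})$ with $1-\alpha$.
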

The quantile $t_{1-\alpha}$ used in $\wh{\mathcal{S}}(t_{1-\alpha})$ is not directly computable, because not only the distribution of $\wt Z$ depends on the unknown covariance structures of $G_P$, but also the classes of functions $\mathcal{F}$ involve the unknown ridge $\mathcal{S}$. This issue can be addressed using a bootstrap method studied in the next section.

\subsection{Bootstrap confidence regions for filaments}\label{bootstrapsec}
%

Recall that $t_{1-\alpha}^e$ is the $(1-\alpha)$-quantile of $\sup_{x\in\wh{\mathcal{S}}(\rho_n)} |\wh p^e (x) - \wh p (x)|$ conditional on $\X_n$. Next we show the asymptotic validity of $\wh{\mathcal{S}}(t_{1-\alpha}^e)$ as a confidence region for $\mathcal{S}$. We suppose that $\beta^\prime$ is known to us. See the examples in Remark \ref{assumptionremark}. 

Below is a result analogous to Theorem~\ref{gaussianappanti}. For any $\gamma,\kappa,q>0$, let 
\begin{align}
\zeta_n^{\gamma,\kappa,q} = (\log n)^{3/2} \frac{\rho_n^{1/\beta^\prime}}{h}  + \frac{(\log n)^{3/2}}{\gamma^{1+q}} (n^{1/q} \gamma_{n,h}^{(0)} + (\gamma_{n,h}^{(0)})^{1/2}) + \kappa^{-1} (\gamma+n^{-1}). \label{zetan2def}
%
\end{align}
%

\begin{proposition}\label{multiplierbootapp}
Assume that (F1) - (F3), (K1), (K2) and (H) hold. For every $\alpha\in(0,1)$, $\gamma\in(0,1)$, $\kappa\in(0,1)$ and $q\geq 4$ and $n$ large enough, with probability $1-\kappa$, there exists a constant $C>0$ such that
\begin{align}\label{zetan2res}
&\sup_{t\geq \bar\sigma_{0}} \Big|\mathbb{P}\Big(\omega_n \sup_{x\in\wh{\mathcal{S}}(\rho_n)} \wh p^e(x) \leq t \;|\;\X_n\Big) - \mathbb{P}(\wt Z \leq t ) \Big| \leq C\zeta_n^{\gamma,\kappa,q},
%
 %
\end{align}
%
 %
where $\bar\sigma_{0}$ is given in Theorem~\ref{gaussianappanti}.
\end{proposition}

The Gaussian approximation using $\wt Z$ serves as a bridge that connects the distribution of $\omega_n \sup_{x\in\mathcal{S}}\wh p(x)$  and its bootstrap approximation $\omega_n \sup_{x\in\wh{\mathcal{S}}(\rho_n)} \wh p^e(x)$, as can be seen from Theorem~\ref{gaussianappanti} and Proposition~\ref{multiplierbootapp}. Combining these two results, we can obtain the asymptotic validity of $\wh{\mathcal{S}}(t_{1-\alpha}^e)$ as a confidence region of $\mathcal{S}$.

\begin{theorem}\label{multiplierquantile}
Assume that (F1) - (F4), (K1), (K2) and (H) hold. Then 
\begin{align}
\label{multiplierquantileres}
\mathbb{P}( \mathcal{S}\subset \wh{\mathcal{S}}(t_{1-\alpha}^e) )  = (1-\alpha) + O(\tau_n).
\end{align}
where $\tau_n=(\log n)\delta_{n}^++(\sqrt{\log n} (\gamma_{n,h}^{(0)})^{1/6})^{1-\eta} +  (\log n)^{3/2}  h^{-1}\rho_n^{1/\beta^\prime}=o(1)$. In addition, there exist positive constants $C_1$ and $C_2$ such that when $n$ is large enough we have with probability at least $1-n^{-C_1}$,
\begin{align}\label{confiregsize}
\sup_{x\in \wh{\mathcal{S}}(t_{1-\alpha}^e)} d(x,\mathcal{S}) \leq C_2\sqrt{\log n}/\omega_n.
%
%
\end{align}
%
%
%
%
%
\end{theorem}
\begin{remark}\label{casebfaster}
{\em 1. Note that unlike the confidence region $\wh{\mathcal{S}}(t_{1-\alpha})$ given in Corollary~\ref{theoryci} where $t_{1-\alpha}$ explicitly depends on $\omega_n$ which are different for cases (a) and (b), the form of $\wh{\mathcal{S}}(t_{1-\alpha}^e)$ does not distinguish these two cases, although we have different requirements for the bandwidth $h$ as specified in assumption (H).


2. The confidence region $\wh{\mathcal{S}}(t_{1-\alpha}^e)$ is constructed by including the area where the value of $\wh p$ is slightly above zero. The result in (\ref{confiregsize}) describes the location of $\wh{\mathcal{S}}(t_{1-\alpha}^e)$, that is, asymptotically it is unlikely that this confidence region contains points far away from the true ridge $\mathcal{S}$.
}\end{remark}

\subsection{Estimation and Inference for the Unknowns}
The framework of deriving our confidence regions involves the parameter $\beta^\prime$ and the distinction between cases (a) and (b). Sometimes such information is unknown and requires estimation and inference, which is addressed in this section.

\label{unknowninference}

\subsubsection{Estimation of $\beta^\prime$}
\label{betaestimation}
The knowledge of $\beta^\prime$ is required to ensure that the sequence $\rho_n$ satisfies assumption (H). Below we consider the estimation of $\beta^\prime$. For a positive sequence $r_n$ converging to zero, define $\wh\beta^\prime = \log_{r_n}(R_n)$, where 
\begin{align}
R_n = \inf_{x\in\mathcal{H} } \sup_{y\in\mathcal{B}(x,r_n)} \wh p(y).
\end{align}
This type of {\em vernier} also appears in \cite{singh2009adaptive} for level set estimation.
\begin{theorem}
\label{betaprimeest}
Suppose that $(\gamma_{n,h}^{(2)} + h^\beta)^{1/\beta^\prime} = o(r_n)$ where $r_n\to0$ as $n\to\infty$. Assume that (F1) - (F3), and (K1) hold. There exists a positive constant $C$ such that for any $\ell$ and $n$ large enough, we have with probability at least $1-n^{\ell}$ that $|\wh\beta^{\prime} - \beta^{\prime}| \le C/|\log r_n|.$
\end{theorem}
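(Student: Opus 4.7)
The plan is to show that $R_n\asymp r_n^{\beta'}$ up to an additive stochastic error of smaller order than $r_n^{\beta'}$, and then extract $\wh\beta'=\log R_n/\log r_n=\beta'+O(1/|\log r_n|)$. Standard uniform bounds for kernel estimators of the first and second derivatives of $f$ (see Remark~\ref{ridgeleading}), combined with matrix perturbation bounds that are uniform in $x$ under the eigengap in (F2), yield: for every $\ell>0$ there is a constant $C$ such that with probability at least $1-n^{-\ell}$, $\eta_n:=\sup_{y\in\mathcal{H}}|\wh p(y)-p(y)|\le C(\gamma_{n,h}^{(2)}+h^\beta)$. The hypothesis $(\gamma_{n,h}^{(2)}+h^\beta)^{1/\beta'}=o(r_n)$ then gives $\eta_n=o(r_n^{\beta'})$. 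All remaining steps are carried out on this event.

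For the upper bound, take the specific point $x_1\in\mathcal{S}$ supplied by (F3) and apply \eqref{betaprime2}: when $r_n<\delta_0$, for every $y\in\mathcal{B}(x_1,r_n)$ we have $d(y,\mathcal{S})\le\|y-x_1\|\le r_n$, so $p(y)\le C_U r_n^{\beta'}$. Hence $R_n\le\sup_{y\in\mathcal{B}(x_1,r_n)}\wh p(y)\le C_U r_n^{\beta'}+\eta_n=(1+o(1))C_U r_n^{\beta'}$.

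The lower bound is the main technical step. I would show that for every $x\in\mathcal{H}$, $\sup_{y\in\mathcal{B}(x,r_n)}p(y)\gtrsim r_n^{\beta'}$, splitting on $d(x,\mathcal{S})$. If $d(x,\mathcal{S})\ge\delta_0$, continuity of $p$ on the compact set $\{z:d(z,\mathcal{S})\ge\delta_0\}$ gives $p(x)\ge c_\ast>0$, which dominates $r_n^{\beta'}$ as $n\to\infty$. If $r_n\le d(x,\mathcal{S})<\delta_0$, then \eqref{betaprime1} applied at $y=x$ yields $p(x)\ge C_L r_n^{\beta'}$. The hard sub-case is $d(x,\mathcal{S})<r_n$: one must exhibit $y\in\mathcal{B}(x,r_n)$ with $d(y,\mathcal{S})\gtrsim r_n$ and then invoke \eqref{betaprime1}. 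Here I would use that (F2) forces $\mathcal{S}$ to be locally contained in a finite union of smooth $r$-dimensional strata (by the implicit function theorem applied to $V(\cdot)^\top\nabla f(\cdot)$ under the eigengap), so its Hausdorff dimension is at most $r<d$. A tube-type volume estimate then gives $\mathrm{Vol}\bigl(\mathcal{B}(x,r_n)\cap(\mathcal{S}\oplus cr_n)\bigr)\lesssim r_n^{r}(cr_n)^{d-r}=c^{d-r}r_n^{d}$, which is strictly less than $\mathrm{Vol}(\mathcal{B}(x,r_n))\asymp r_n^d$ for $c$ sufficiently small, forcing the existence of a $y$ in the ball at distance at least $cr_n$ from $\mathcal{S}$. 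This sub-case is the main obstacle because the bound must be uniform in $x$ and must accommodate stratification/intersection points of $\mathcal{S}$; its resolution hinges on a local rectifiability statement extracted from (F2). Combining the three sub-cases yields $R_n\ge c_0 r_n^{\beta'}-\eta_n=(1-o(1))c_0 r_n^{\beta'}$.

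On the high-probability event above there exist positive constants $a,b$ with $ar_n^{\beta'}\le R_n\le br_n^{\beta'}$. Since $r_n\in(0,1)$, $\log r_n<0$, and dividing $\log R_n=\beta'\log r_n+O(1)$ by $\log r_n$ gives
\begin{align*}
\wh\beta'=\frac{\log R_n}{\log r_n}=\beta'+\frac{O(1)}{\log r_n}=\beta'+O\!\left(\frac{1}{|\log r_n|}\right),
\end{align*}
which is the claimed bound.
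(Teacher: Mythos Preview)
Your plan coincides with the paper's: control $\eta_n=\sup_{\mathcal H}|\wh p-p|=O(\gamma_{n,h}^{(2)}+h^\beta)=o(r_n^{\beta'})$, sandwich $R_n$ between constant multiples of $r_n^{\beta'}$, and divide logs. The paper organizes the lower bound slightly differently: it first shows that for $x$ with $d(x,\mathcal S)>cr_n$ (with $c=((C_U+2)/C_L)^{1/\beta'}+1$) one has $\wh a_n(x)>(C_U+2)r_n^{\beta'}-\eta_n>\wh a_n(x_1)$, so the minimizer $x_2$ lies in $\mathcal S\oplus(cr_n)$; it then picks $\wt x_2\in\mathcal S$ within $cr_n$ of $x_2$, uses continuity of $p$ to compare $\wh a_n(x_2)$ with $a_n(\wt x_2):=\sup_{\mathcal B(\wt x_2,r_n)}p$, and asserts $a_n(\wt x_2)\ge C_L r_n^{\beta'}$. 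That last assertion is precisely your ``hard sub-case'' at a ridge point, and the paper gives no argument for it; you are actually more careful here in flagging it and sketching a tube-volume resolution. Note, though, that your rectifiability step needs more than the eigengap in (F2): the implicit-function theorem for $V(\cdot)^\top\nabla f(\cdot)$ requires a rank condition on its Jacobian, which is not part of (F1)--(F3).

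There is a genuine gap in your far case. The claim ``$p(x)\ge c_\ast>0$ on the compact set $\{z:d(z,\mathcal S)\ge\delta_0\}$'' does not follow from (F1)--(F3): the zero set $\mathcal P_0=\{p=0\}$ can contain points with $\lambda_{r+1}>0$, and such points need not lie near $\mathcal S$, so $p$ can vanish on that set and your $c_\ast$ may be $0$. (The lower bound in \eqref{betaprime1} is only asserted on $\mathcal S\oplus\delta_0$.) The paper's proof has the same silent issue---it applies \eqref{betaprime1} to all $y\notin\mathcal S\oplus((c-1)r_n)$ rather than only to $y\in\mathcal S\oplus\delta_0$---so the statement as written appears to rely on an implicit assumption that $p$ is bounded away from $0$ outside $\mathcal S\oplus\delta_0$ (equivalently, $\mathcal P_0\setminus\mathcal S$ does not interfere).
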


\begin{remark}
\label{betaprimeest}
{\em This result guarantees that if we choose $\rho_n$ satisfying a condition analogous to assumption (H), where we replace $\beta^\prime$ by $\wh\beta^{\prime} + |\epsilon_n^\prime|$ for some $\epsilon_n^\prime\to0$ such that $|\log r_n|=o(\epsilon_n^\prime)$, e.g., $\epsilon_n^\prime=|\log r_n|^\gamma$ for some $\gamma\in(0,1)$, then assumption (H) is also satisfied with probability at least $1-n^{\ell}$ for $n$ large enough, and the asymptotic validity of the confidence region in Theorem~\ref{multiplierquantile} still holds. Here $r_n$ can be chosen to satisfy the condition in the above theorem without exactly knowing $\beta^\prime$, such as $1/\log n$.}
\end{remark}

\subsubsection{Hypothesis testing between cases (a) and (b)}
As mentioned in Remark \ref{casebfaster}, although our confidence region has a uniform form for cases (a) and (b), there are different requirements for these two cases in assumption (H). Next we consider the hypothesis testing problem for these two cases, i.e.,
\begin{align*}
&H_0: \nabla f(x)=0 \text{ for all }x\in\mathcal{S},\\
\text{versus } &H_1: \text{there exists }x_0\in\mathcal{S} \text{ such that }\|\nabla f(x_0)\| >0.
\end{align*}
Define $t_n = \wh\beta^{\prime} + 1$.
Our test statistic is given by
\begin{align}
T_n = \sup_{x\in \wh{\mathcal{S}}(\rho_n)} \inf_{y\in \mathcal{B}(x,\rho_n^{1/t_n})}\|\nabla \wh f(y)\|.
\end{align}
For $\alpha\in(0,1)$, let $\tau^e_{1-\alpha}$ be the $(1-\alpha)$-quantile of $\sup_{x\in\wh{\mathcal{S}}(\rho_n)} \|\nabla \wh f^e (x) - \nabla \wh f (x)\|$ conditional on $\X_n$. We reject $H_0$ if $T_n \geq \tau^e_{1-\alpha}.$ The following result states that this hypothesis test asymptotically achieves type I error at most $\alpha$ and has power tending to 1.

\begin{theorem}
\label{testab}
Assume that (F1) - (F4), (K1), (K2) and (H) for case (a) hold, and $(\gamma_{n,h}^{(2)} + h^\beta)^{1/\beta^\prime} = o(r_n)$ where $r_n\to0$ as $n\to\infty$. Then,
\begin{align*}
\text{under } H_0: &\;\;\;\mathbb{P}(T_n < \tau^e_{1-\alpha}) \geq (1-\alpha) + o(1); \\
\text{under } H_1: &\;\;\;\mathbb{P}(T_n \geq \tau^e_{1-\alpha}) \rightarrow 1.
\end{align*}
\end{theorem}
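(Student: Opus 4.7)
My plan is to treat the two hypotheses separately, in each case reducing $T_n$ to a quantity whose distribution (or magnitude) is already controlled by the multiplier bootstrap and empirical process machinery of the preceding sections.

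\emph{Under $H_0$.} I would first invoke Theorem~\ref{betaprimeest} to get $|\wh\beta^\prime - \beta^\prime| = O(1/|\log r_n|)$ with probability $1 - O(n^{-\ell})$, so that $t_n = \wh\beta^\prime + 1$ satisfies $1/t_n < 1/\beta^\prime$ strictly and $\rho_n^{1/t_n}/\rho_n^{1/\beta^\prime} \to \infty$. Combined with (F3) and the uniform consistency of $\wh p$, this gives $\wh{\mathcal{S}}(\rho_n) \subset \mathcal{S} \oplus C\rho_n^{1/\beta^\prime}$ with high probability, so every $x \in \wh{\mathcal{S}}(\rho_n)$ admits some $y^*(x) \in \mathcal{S}$ with $\|y^*(x) - x\| < \rho_n^{1/t_n}$ for $n$ large. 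Since $\nabla f(y^*(x)) = 0$ under $H_0$ and $\mathcal{S} \subset \wh{\mathcal{S}}(\rho_n)$ with high probability,
\begin{align*}
T_n \;\leq\; \sup_{x\in \wh{\mathcal{S}}(\rho_n)} \|\nabla \wh f(y^*(x))\| \;\leq\; \sup_{y\in \wh{\mathcal{S}}(\rho_n)} \|\nabla \wh f(y) - \nabla f(y)\|.
\end{align*}
The next step would be a multiplier bootstrap Gaussian comparison analogous to Corollary~\ref{gaussianappcorollary} and Theorems~\ref{gaussianappanti}--\ref{multiplierbootapp}, but applied to the vector-valued process $\nabla\wh f - \nabla f$ indexed by the function class $\mathcal{F}^\prime = \{h^{-d/2}z^\top \nabla K((x-\cdot)/h): x\in\mathcal{S}, z\in\mathbb{S}_{d-1}\}$ (the case (b) structure, with the projection $L(x)^\top$ stripped off). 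The same empirical-process Gaussian coupling, anti-concentration via Lemma~\ref{newanticoncentration}, and multiplier step then yield that $\phi^e_{1-\alpha}$ matches the $(1-\alpha)$-quantile of $\sup_{y\in\wh{\mathcal{S}}(\rho_n)}\|\nabla\wh f(y) - \nabla f(y)\|$ up to $o(1)$, giving $\mathbb{P}(T_n < \phi^e_{1-\alpha}) \geq 1 - \alpha + o(1)$.

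\emph{Under $H_1$.} Pick $x_0 \in \mathcal{S}$ with $\|\nabla f(x_0)\| = c_* > 0$. By continuity of $\nabla f$ from (F1) and $\rho_n^{1/t_n} \to 0$, for $n$ large we have $\|\nabla f(y)\| \geq c_*/2$ on $\mathcal{B}(x_0,\rho_n^{1/t_n})$, so with $\varepsilon_n^{(1)} = o(1)$ one gets $\inf_{y\in\mathcal{B}(x_0,\rho_n^{1/t_n})}\|\nabla\wh f(y)\| \geq c_*/4$ with high probability. Uniform consistency of $\wh p$ and $\wh\lambda_{r+1}$, together with $p(x_0) = 0$ and $\lambda_{r+1}(x_0) < -e_0$, place $x_0 \in \wh{\mathcal{S}}(\rho_n)$ with high probability, giving $T_n \geq c_*/4$. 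By the same Gaussian-approximation argument used in the $H_0$ case together with Lemma~\ref{wtzhquantile}, $\phi^e_{1-\alpha} = O(\sqrt{\log n}\,\gamma_{n,h}^{(1)}) = o(1)$, so $\mathbb{P}(T_n \geq \phi^e_{1-\alpha}) \to 1$.

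\emph{The main obstacle} is establishing multiplier bootstrap validity for $\sup_{x\in\wh{\mathcal{S}}(\rho_n)}\|\nabla\wh f^e(x) - \nabla\wh f(x)\|$: under $H_0$ the entire ridge consists of critical points, so the limiting Gaussian process over $\mathcal{F}^\prime$ is degenerate in exactly the manner identified in Lemma~\ref{sigmah2bounds}, forcing use of the positive-supremum-variance anti-concentration Lemma~\ref{newanticoncentration} rather than a classical one, and requiring one to verify that $\bar\sigma_0 > 0$ for $\mathcal{F}^\prime$ (which follows from (K2) together with the positivity of $f$ in (F1)). A secondary subtlety is that $t_n$ is random; the control must be transferred from the deterministic $\beta^\prime + 1$ to $t_n$ on the high-probability event of Theorem~\ref{betaprimeest}, with verification that $\rho_n^{1/t_n}$ still dominates the localization radius $\rho_n^{1/\beta^\prime}$ there.
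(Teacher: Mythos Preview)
Your proposal is correct and follows essentially the same route as the paper: under $H_0$ you localize each $x\in\wh{\mathcal{S}}(\rho_n)$ to a nearby ridge point (using Theorem~\ref{betaprimeest} to ensure $\rho_n^{1/t_n}\gg\rho_n^{1/\beta'}$ and Lemma~\ref{ridgeinclusionlemma} for the inclusion), bound $T_n$ by a supremum of gradient errors, and then invoke the multiplier-bootstrap machinery; under $H_1$ you fix a non-critical ridge point, get a constant lower bound on $T_n$, and show $\phi^e_{1-\alpha}=o(1)$. The paper does exactly this, writing $T_n\le D_n:=\sup_{x\in\mathcal{S}}\|\nabla\wh f(x)-\nabla f(x)\|$ (you take the sup over the slightly larger $\wh{\mathcal{S}}(\rho_n)$, which is harmless).

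One correction to your discussion of the ``main obstacle'': the Gaussian process over your class $\mathcal{F}'=\{h^{-d/2}z^\top\nabla K((x-\cdot)/h)\}$ is \emph{not} degenerate. Its variance at $g_{x,z}$ is $f(x)\,z^\top\mathbf{S}\,z+O(h^2)$ with $\mathbf{S}=\int\nabla K\,\nabla K^\top$, and under (K2) and (F1) this is uniformly bounded below by $c_0\lambda_{\min}(\mathbf{S})>0$. The degeneracy in Lemma~\ref{sigmah2bounds} arises from the projection $L(x)^\top$ (or from the factor $\nabla f(x)$ inside $M(x)$), both of which you have stripped off here. So for this step you can in fact use the classical anti-concentration inequality with positive infimum variance; Lemma~\ref{newanticoncentration} is not needed, and the bootstrap validity for $\phi^e_{1-\alpha}$ is slightly easier than you anticipate.
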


\begin{remark}
\label{betaprimeest}
{\em Note that the requirement for $h$ and $\rho_n$ in assumption (H) is stronger for case (a) than (b). We use the stronger condition so that we can analyze both the type I error and power without having to determine the case at the beginning. Note that Remark~\ref{betaprimeest} also applies here, that is, the requirement for $\rho_n$ is still sufficient when $\beta^\prime$ is replaced by $\wh\beta^{\prime} + |\epsilon_n^\prime|$ for some $\epsilon_n^\prime\to0$ such that $|\log r_n|=o(\epsilon_n^\prime)$. }
\end{remark}

\subsection{Some Variants}
In this section we discuss three variants of the confidence region $\wh{\mathcal{S}}(t_{1-\alpha}^e)$. 
\subsubsection{Confidence regions based on empirical bootstrap}
\label{empiricalboot}

We can also use the empirical bootstrap to construct confidence regions for $\mathcal{S}$. Let $X_1^*,\cdots,X_n^*$ be an empirical bootstrap sample of $\X_n$. Define
\begin{align*}
\wh f^*(x) = \frac{1}{nh^d} \sum_{i=1}^n K\Big(\frac{x-X_i^* }{h}\Big) .
\end{align*}
Let $\wh\lambda_{1}^*(x)\geq\cdots\geq \wh\lambda_d^*(x)$ be the eigenvalues of $\nabla^2 \wh f^*(x)$ with associated orthonormal eigenvectors $\wh v_1^*(x),\cdots,\wh v_d^*(x)$. Denote $\wh V^*(x)=(\wh v_{r+1}^*(x),\cdots,\wh v_d^*(x))$ and $\wh L^* (x)=\wh V^*(x)\wh V^*(x)^\top $. Let $\wh p^* (x) = \|\wh L^* (x) \nabla \wh f^* (x)\| = \|\wh V^* (x)^\top  \nabla \wh f^* (x)\|$ and $t_{1-\alpha}^*$ be the $(1-\alpha)$-quantile of $\sup_{x\in\wh{\mathcal{S}}(\rho_n)} |\wh p^* (x) - \wh p(x)|$ conditional on $\X_n$ for $\alpha\in(0,1)$. Our $(1-\alpha)$ confidence region for $\mathcal{S}$ based on empirical bootstrap is given by $\wh{\mathcal{S}}(t_{1-\alpha}^*)$, which can be approximated based on Monte Carlo simulations.
The result in Theorem~\ref{multiplierquantile} still holds if we replace $\wh{\mathcal{S}}(t_{1-\alpha}^e)$ by $\wh{\mathcal{S}}(t_{1-\alpha}^*)$. Conditional on the sample $\X_n$, the bootstrap nonridgeness functions $\wh p^e(x)$ and $\wh p^*(x)$ on $\textsf{ridge}(\wh f\,)$ behave like a Gaussian process and an empirical process, respectively, which is attributed to the major difference between the theoretical justification for the asymptotic validity of the confidence regions based on multiplier bootstrap and empirical bootstrap, although the main steps of the proofs for these two theorems are similar. 

\subsubsection{Bootstrapping over the estimated ridge $\wh{\mathcal{S}}$}
\label{DirectBoot}

The confidence region $\wh{\mathcal{S}}(t_{1-\alpha}^e)$ relies on bootstrapping a supremum quantity over $\wh{\mathcal{S}}(\rho_n)$, a small neighborhood of $\wh{\mathcal{S}}$. The main technical reason for using this neighborhood is to guarantee $d_H(\mathcal{S}, \wh{\mathcal{S}}(\rho_n)) \le C\rho_n^{1/\beta^\prime}$ with a large probability (see Lemma \ref{ridgeinclusionlemma}), where $d_H$ is the Hausdorff distance between subsets of $\mathbb{R}^d$. In fact, under some further assumptions such that $d_H(\mathcal{S}, \wh{\mathcal{S}})\le C\rho_n^{1/\beta^\prime}$ with a large probability, it is sufficient to bootstrap the supremum over $\wh{\mathcal{S}}$ itself. For example, under the assumption $\beta=4$ and  $\nabla (L(x) \nabla f(x))$ has rank $d-r$ for all $x\in\mathcal{S}$, and using a 2-valid kernel, it is shown in Theorem 4 of \cite{qiao2021algorithms} that $d_H(\mathcal{S}, \wh{\mathcal{S}}) \le C(\gamma_{n,h}^{(2)} + h^2)$ with a large probability. This is enough to establish that $\wh{\mathcal{S}}(\wt t_{1-\alpha}^e)$ covers $\mathcal{S}$ with asymptotic probability $1-\alpha$, following the same procedure of proof as we have provided for $\wh{\mathcal{S}}(t_{1-\alpha}^e)$ in Theorem~\ref{multiplierquantile}. Here for any $\alpha\in(0,1)$, $\wt t_{1-\alpha}^e$ is the $(1-\alpha)$-quantile of the distribution of $\sup_{x\in\wh{\mathcal{S}}} |\wh p^e (x) - \wh p (x)|$ conditional on $\X_n$, which equals $t_{1-\alpha}^e$ when $\rho_n\equiv0$. 

\subsubsection{Confidence regions for ridges of log-density}

Often what is of interest is the estimation of the ridge of the log-density, that is, $\textsf{ridge}(\log( f))$. The application of the logarithm transformation has an advantage in stabilizing the ridge estimation, as argued in \cite{genovese2014nonparametric}. Also, in the special case that $f$ is Gaussian, $\textsf{ridge}(\log( f))$ is connected to PCA, which is also a reason that the a log transformation of the kernel density estimation is implicitly used in the original SCMS algorithm proposed by \cite{ozertem2011locally}. In fact, the two sets $\textsf{ridge}(f)$ and $\textsf{ridge}(\log( f))$ coincide under certain conditions. This can seen from the fact that the gradients of $f$ and $\log (f)$ have the same direction, and for $i=r+1,\cdots,d$, the eigenvector $v_i(x)$ of $\nabla^2 f(x)$ is an eigenvector of the Hessian of $\log( f (x))$ associated with eigenvalue $\lambda_i(x)/f^2(x)$, for $x\in\textsf{ridge}(f)$. 

The method that we construct confidence regions for $\textsf{ridge}( f)$ can be analogously applied to those of $\textsf{ridge}(\log( f))$: Let $p_\textsc{l}$ be the nonridgeness function of $\log(f)$, and $\lambda_{\textsc{l},r+1}$ be the $(r+1)$th largest eigenvalue of $\nabla^2 [\log(f)]$. For any $\epsilon\geq0$, define $\mathcal{S}_\textsc{l}(\epsilon)=\{x\in\mathcal{H}: p_\textsc{l}(x)\leq \epsilon, \lambda_{\textsc{l},r+1}(x)<0\}$. Let $\wh p_\textsc{l}$ and $\wh{\mathcal{S}}_\textsc{l}$ be the plug-in estimators of $p_\textsc{l}$ and $\mathcal{S}_\textsc{l}$, respectively.  The estimator $\wh p_\textsc{l}^e$ is similarly defined based on multiplier bootstrap. An asymptotic $(1-\alpha)$ confidence region for $\textsf{ridge}(\log( f))$ is given by $\wh{\mathcal{S}}_\textsc{l}(t_{\textsc{l},1-\alpha}^e)$, where $t_{\textsc{l},1-\alpha}^e$ is the $(1-\alpha)$-quantile of the distribution of $\sup_{x\in\wh{\mathcal{S}}_\textsc{l}(\rho_n)} |\wh p_\textsc{l}^e (x) - \wh p_\textsc{l} (x)|$. The asymptotic validity of the confidence region can be shown following the same proof for Theorem \ref{multiplierquantile}, while all the assumptions are with respect to $\log(f)$.  

We note that the above three variants of the confidence region $\wh{\mathcal{S}}(t_{1-\alpha}^e)$ correspond to three independent aspects and thus their combinations also exist. 
\subsection{Examples}\label{examplesec}

We only use two 2-dimensional examples below to give a proof of concept for our confidence regions. In our theoretical results we consider a region strictly contained in the support of the density function. For this reason, the confidence regions are restricted to the region where the kernel density estimates exceed the 5\% quantiles of the density estimates evaluated on the sample points. Below we only show the results using the empirical bootstrap method and the corresponding results based on the multiplier bootstrap look very similar. We have applied a logarithm transformation to the kernel density estimator and its bootstrap version in the examples. 

In the first example shown in Figure~\ref{fig: simpsimu}, we have two random samples, each of size 2000, generated from density functions with circle shape ridges and the 90\% confidence regions for the ridges are shown. These two examples correspond to the two cases (a) and (b), respectively. In this example, we see how the local geometry near the ridges impacts the shape of the confidence regions. Not surprisingly, the confidence region of the ridge corresponding to case (b) is a band of roughly constant width. Using the sample corresponding to case (a), the confidence band has varying local width, which reflects how steep the nonridgeness function increases away from the ridge. Also note that in this example the ridge for case (b) consists of all the modes of the density function. The modes are all degenerate in this example because the density function is flat on them, which means that the density function is not a Morse function. See the discussion of cases (a) and (b) in Section~\ref{Twocases}.

\begin{figure}[ht]
\begin{center}
\includegraphics[scale=0.31]{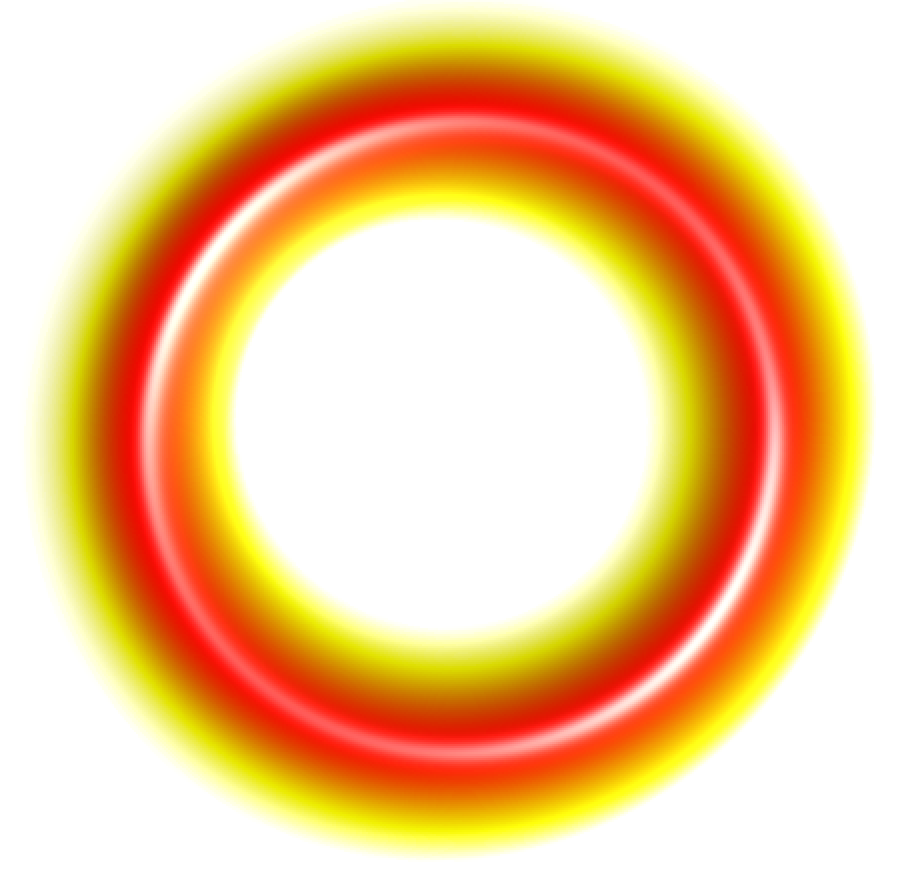}
\includegraphics[scale=0.225]{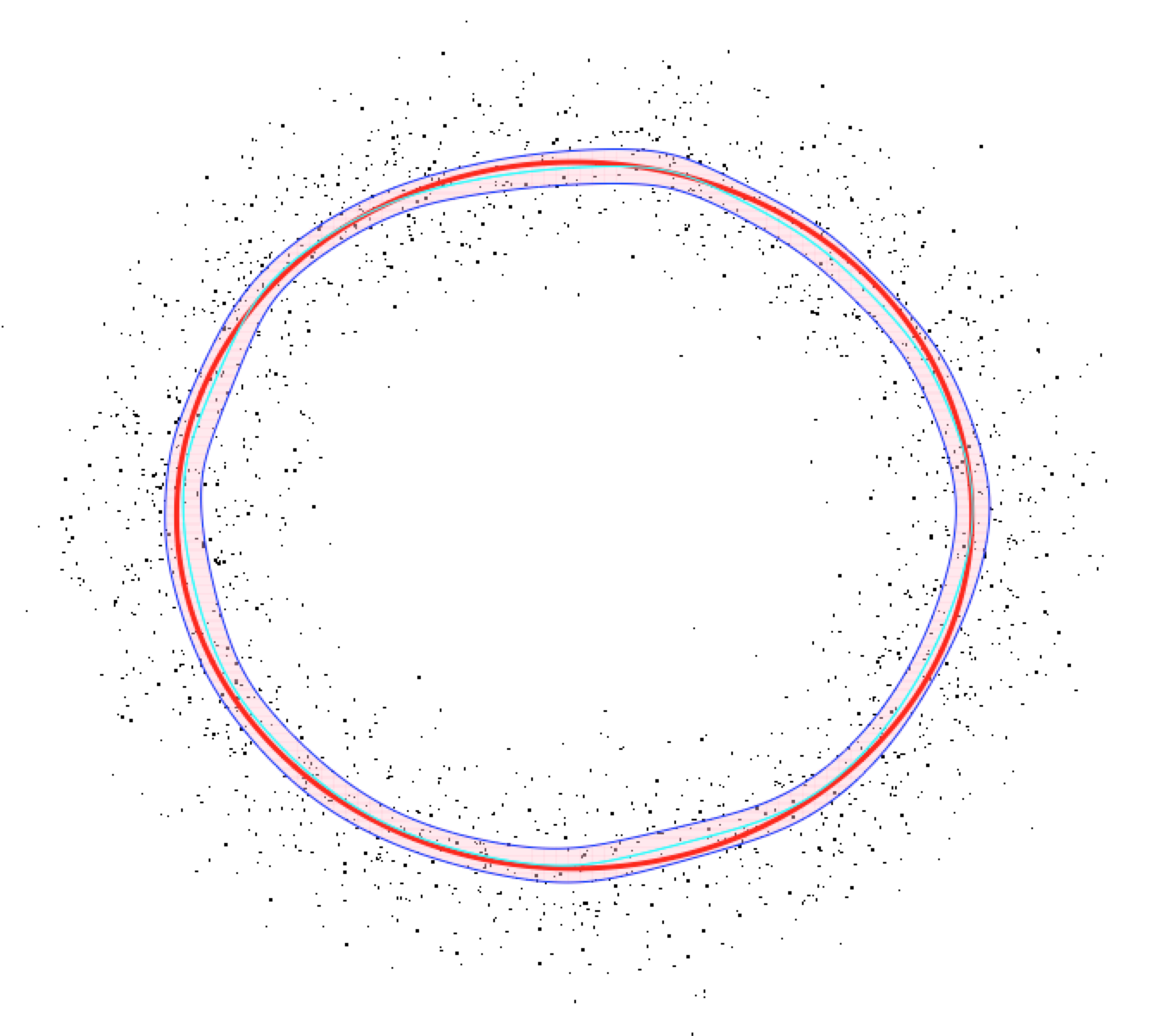}
\includegraphics[scale=0.316]{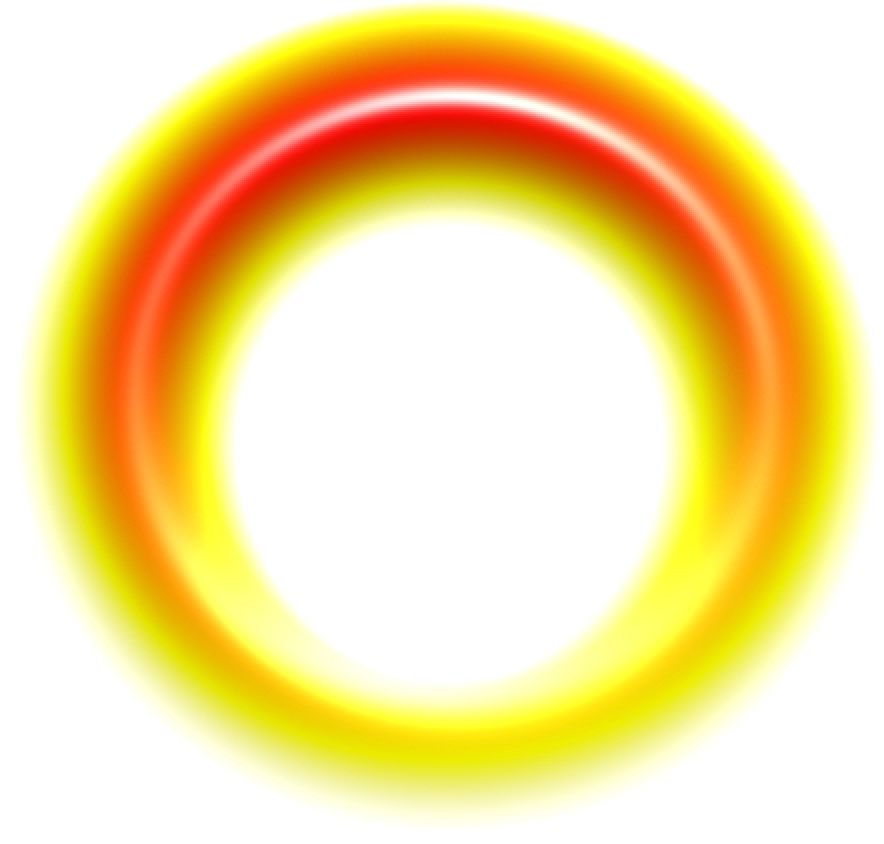}
\includegraphics[scale=0.2084]{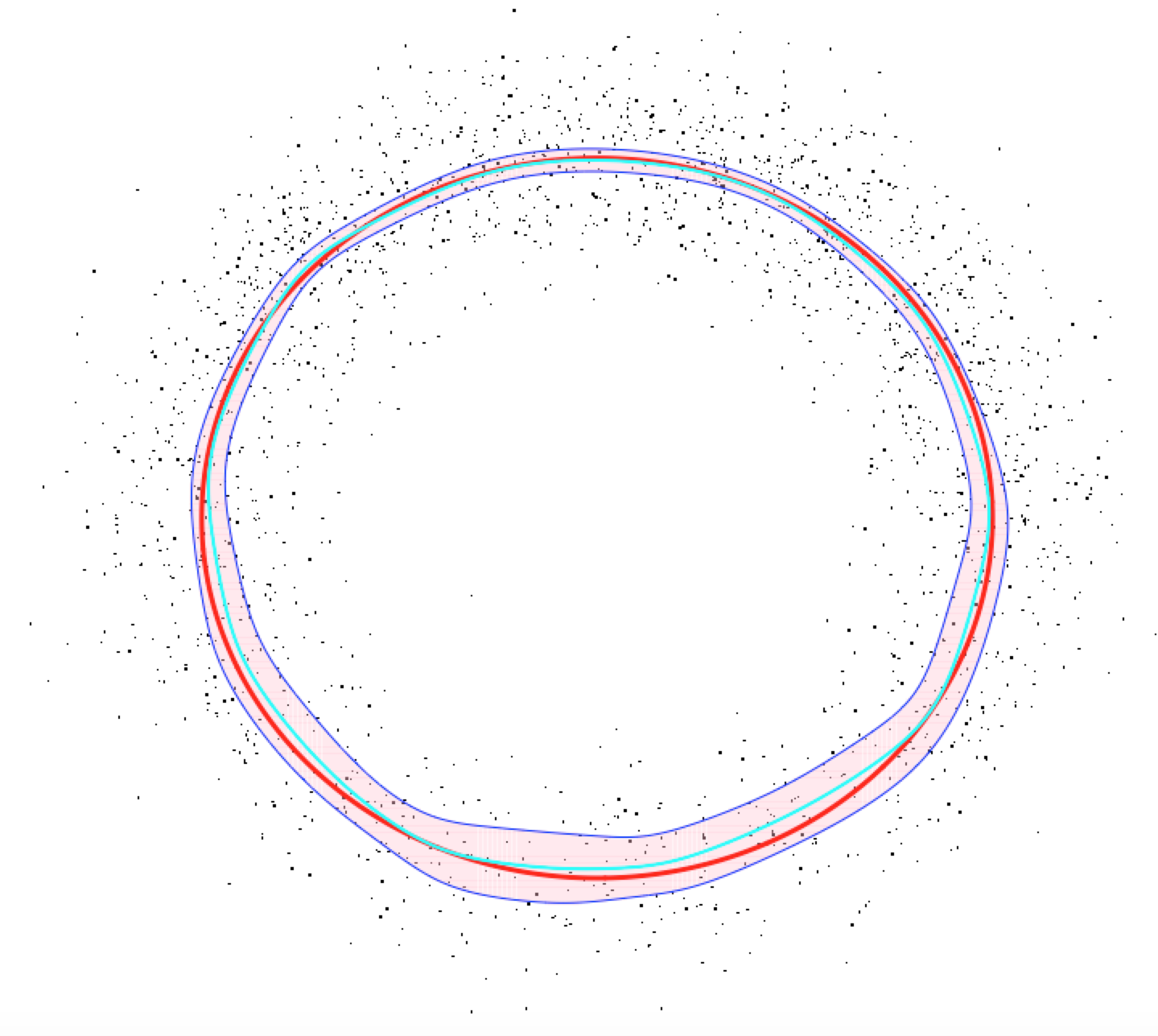}
\caption{First example. Upper panels: result for case (b); lower panels: result for case (a);  The left graphs show the 3D plots of density functions from which the sample points are generated. The right graphs show true ridge (red curve), estimated ridge (cyan curves), and 90\% confidence regions ridges using empirical bootstrap (shaded pink regions) with boundaries (blue curves). The corresponding confidence regions using multiplier bootstrap look similar.}
\label{fig: simpsimu}
\end{center}
\end{figure}

The second example belongs to case (b) but with self-intersections.  As shown in Figure~\ref{fig: simulation}, a sample of size 5000 is generated from a density whose ridge is a sun cross. The plug-in estimate of the ridge itself is not connected at the intersections as expected --- see the discussions in Section~\ref{introsection}. But we are able to use the 90\% bootstrap confidence region to capture the structure of the true ridge, especially the self-intersections.

\begin{figure}[ht]
\begin{center}
\includegraphics[scale=0.31]{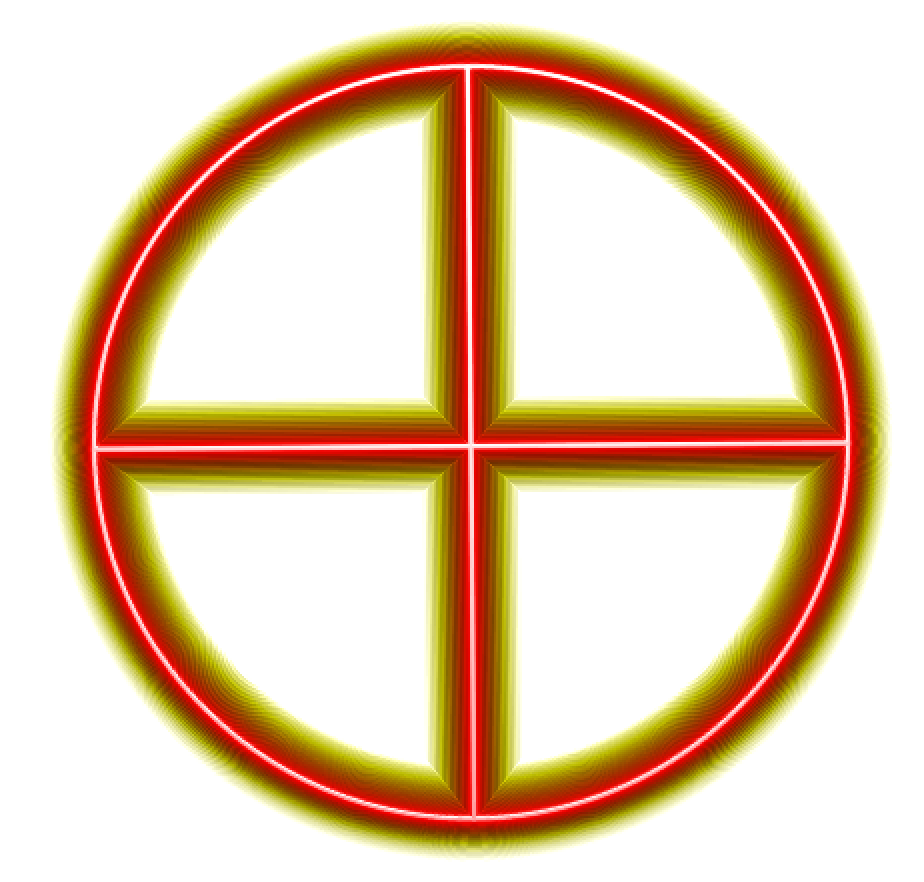}
\includegraphics[scale=0.245]{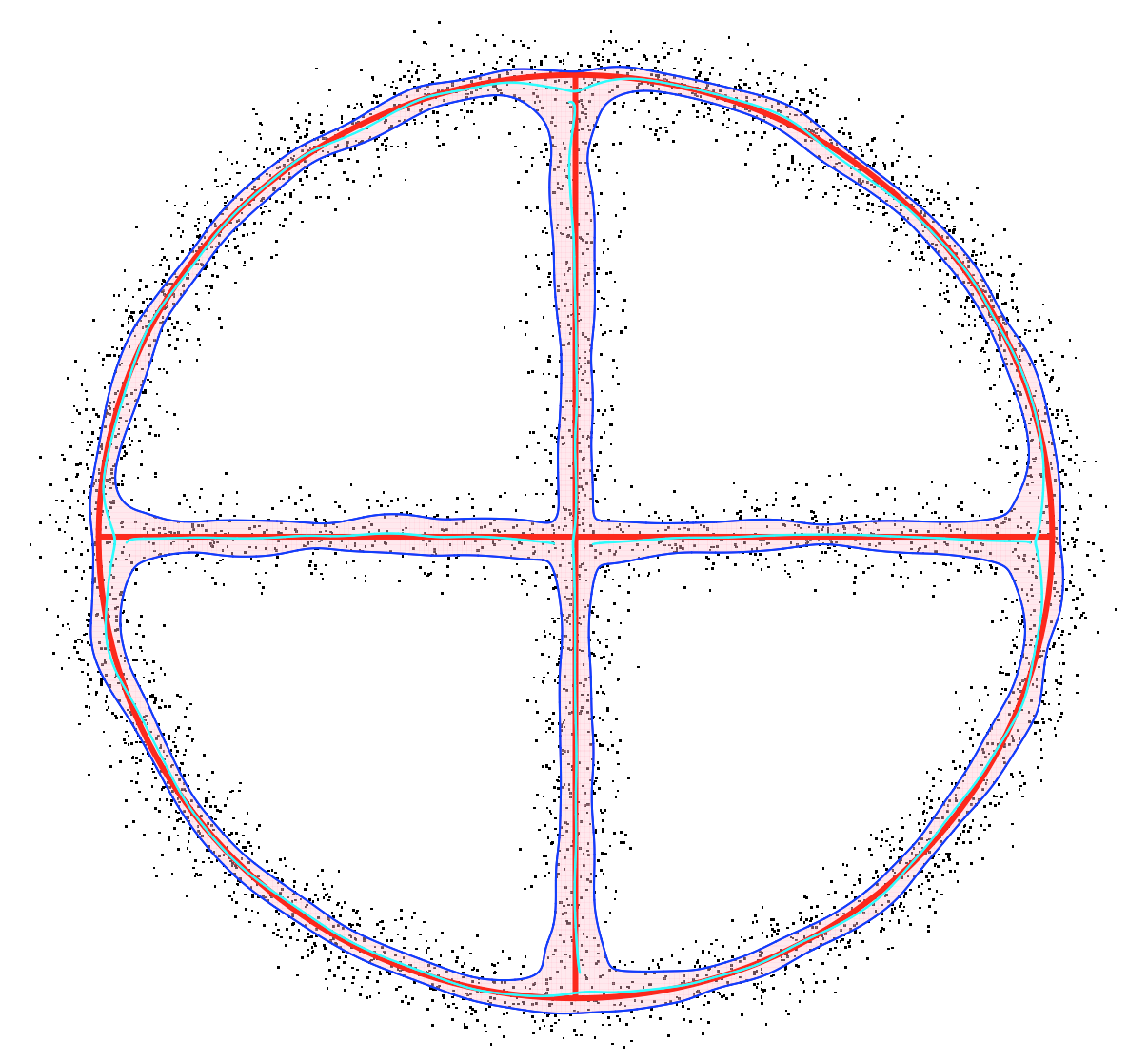}
\caption{Second example with a ridge in the shape of a sun cross. Left panel:  the 3D plot of density function; right panels: true ridge (red curve), estimated ridge (cyan curves), and 90\% confidence regions for ridges using empirical bootstrap (shaded pink regions) with boundaries (blue curves). The corresponding confidence region using multiplier bootstrap looks similar.}
\label{fig: simulation}
\end{center}
\end{figure}


\section{Discussion}

In this paper we develop bootstrap confidence regions for density ridges. Instead of restricting the study only to ridges as manifolds with positive reach, we consider ridges as filamentary structures by allowing the existence of self-intersections. Along the theoretical analysis of these confidence regions, as a critical step we adopt an anti-concentration inequality for the suprema of Gaussian processes without requiring the infimum variance to be positive. We cover two different cases depending on whether the ridge is completely flat, which gives a complete picture of the behaviors of the ridge estimators. 

An open and interesting question is the topological relation between our confidence regions and the filamentary structure of the true underlying model. We have shown in Theorem~\ref{multiplierquantile} that with a high probability the confidence regions are close to the true filament, so that the confidence regions can be viewed as tubes or bands around it. We expect that the confidence regions and the true filament have some topological similarity, and if so, the confidence regions can be used to make statistical inference for the topology of the filamentary structure. Here some ideas from \cite{genovese2014nonparametric} can be useful, where they show that under some regularity conditions $\textsf{ridge}(\wh f\,)\oplus\delta_n$ are $\mathcal{S}$ are {\em nearly homotopic}, for a $\delta_n>0$ depending on some unknown quantities of the model. 

Further topological information can be extracted from our confidence regions. For example, one can use the Reeb graphs (\citealt{ge2011data}) or the medial axes~\citep{genovese2012geometry} of our confidence regions as estimates of the filaments to represent the structure of the data. Again it is an open question to find out the topological relation between these estimates and the filaments of the true underlying model. 

\section{Proofs}\label{proofsec}

In the proofs we use $c,c_1,c_2,\cdots,C,C_1,C_2,\cdots$ to denote constants that may change from place to place.

Let $(T,\dbar)$ be an arbitrary (semi)metric space. For any $\epsilon$, a subset $T_\epsilon$ of $T$ is called an $\epsilon$-covering of $T$ if for any $t$, there exists a point $t_\epsilon\in T_\epsilon$ such that $\dbar(t,t_\epsilon)\leq \epsilon$. $T_\epsilon$ is called an $\epsilon$-packing of $T$ if $d(t_1,t_2)>\epsilon$ for any two different $t_1,t_2\in T_\epsilon$. $T_\epsilon$ is called an $\epsilon$-net of $T$ if $T_\epsilon$ is both an $\epsilon$-covering and $\epsilon$-packing of $T$. The $\epsilon$-covering number $N(T,\dbar,\epsilon)$ of $T$ is the minimum cardinality of $\epsilon$-covering of $T$. The $\epsilon$-packing number $N_p(T,\dbar,\epsilon)$ of $T$ is the maximum cardinality of $\epsilon$-packing of $T$. 

We use the following definition of VC type classes. Let $(\mathcal{X},\mathcal{A})$ be a measurable space. For all $m\geq 1$, any probability measure $Q$ on $(\mathcal{X},\mathcal{A})$ and all measurable functions $g:\mathcal{X}\mapsto \mathbb{R}$, let $\|g\|_{Q,m}=(\int_\mathcal{X} |g|^m d Q)^{1/m}$. For two measurable functions $g_1,g_2$, define $e_Q(g_1,g_2)=\|g_1-g_2\|_{Q,2}$. Let $\mathcal{L}$ be a class of measurable functions from $\mathcal{X}$ to $\mathbb{R}$, with an envelope $F:\mathcal{X}\mapsto \mathbb{R}$ meaning $\sup_{f\in \mathcal{L}} |f(x)|\leq F(x)$ for all $x\in\mathcal{X}$. $\mathcal{L}$ is called VC type if there exist constants $A,v>0$ such that $\sup_{Q} N(\mathcal{L},e_Q,\varepsilon \|F\|_{Q,2})\leq (A/\varepsilon)^v$ for all $\varepsilon\in(0,1]$, where the supremum is taken over all finitely discrete probability measures on $(\mathcal{X},\mathcal{A})$. We call $(A,v)$ the VC constants of $\mathcal{L}$. 

\subsection{Auxiliary Results}
We first give a few auxiliary results that will be used in the subsequent proofs. 

Our proofs use the Talagrand's inequality (see Corollary 2.2 in \citealt{gine2002rates}). The following version of this inequality is taken from Theorem B.1 in \cite{chernozhukov2014anti}.
\begin{lemma}[Talagrand's inequality]\label{talagrandineq}
Let $X_1,\cdots,X_n$ be i.i.d. random variables taking values in a measurable space $(S,\mathcal{S})$. Suppose that $\mathcal{G}$ is a nonempty, pointwise measurable class of functions on $S$ uniformly bounded by a constant $b$ such that there exist constants $a\geq e$ and $v>1$ with $\sup_Q N(\mathcal{G},e_Q,b\varepsilon)\leq (a/\varepsilon)^v$ for all $0<\varepsilon\leq 1$. Let $\sigma^2$ be a constant such that $\sup_{g\in\mathcal{G}}\text{Var}(g)\leq\sigma^2\leq b^2$. If $b^2v\log(ab/\sigma)\leq n\sigma^2$, then for all $t\leq n\sigma^2/b^2$,
\begin{align*}
\mathbb{P}\Big[ \sup_{g\in\mathcal{G}} |\mathbb{G}_n(g)| > A\sigma \sqrt{t\wedge (v\log (ab/\sigma))}\Big]\leq e^{-t},
\end{align*}
where $A>0$ is an absolute constant. 
\end{lemma}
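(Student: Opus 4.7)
The plan is to prove this by combining two classical ingredients: a concentration inequality for the supremum of the empirical process around its mean, and an entropy bound on that mean using the VC-type covering assumption.

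First I would invoke Bousquet's version of Talagrand's inequality for a uniformly bounded class. Since $\mathcal{G}$ is pointwise measurable, uniformly bounded by $b$, and $\sup_{g\in\mathcal{G}}\text{Var}(g)\le \sigma^2$, one obtains
\begin{align*}
\mathbb{P}\!\left(\sup_{g\in\mathcal{G}}|\mathbb{G}_n(g)| \ge \mathbb{E}\sup_{g\in\mathcal{G}}|\mathbb{G}_n(g)| + c_1\sigma\sqrt{t} + c_2\frac{b t}{\sqrt{n}}\right)\le e^{-t}
\end{align*}
for every $t>0$, with absolute constants $c_1,c_2$. Under the working regime $t\le n\sigma^2/b^2$ one has $bt/\sqrt{n}\le \sigma\sqrt{t}$, so the deviation part collapses to $C\sigma\sqrt{t}$.

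Next I would bound the expected supremum by Dudley's chaining entropy integral, tailored to the VC-type hypothesis. Writing $J(\delta)=\int_0^\delta \sqrt{\log N(\mathcal{G},e_{P_n},u\|F\|_{P_n,2})}\,du$, the uniform covering number bound $N(\mathcal{G},e_Q,b\varepsilon)\le (a/\varepsilon)^v$ yields $\sqrt{\log N}\le \sqrt{v\log(a/\varepsilon)}$. After symmetrization and a suitable truncation of the chaining at scale $\sigma/b$ (rather than the envelope scale), standard manipulations give
\begin{align*}
\mathbb{E}\sup_{g\in\mathcal{G}}|\mathbb{G}_n(g)| \le C\!\left( \sigma\sqrt{v\log(ab/\sigma)} + \frac{b\,v\log(ab/\sigma)}{\sqrt{n}} \right).
\end{align*}
The assumption $b^2 v\log(ab/\sigma)\le n\sigma^2$ then absorbs the second term into the first, leaving $\mathbb{E}\sup_g|\mathbb{G}_n(g)|\le C\sigma\sqrt{v\log(ab/\sigma)}$.

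Combining the two steps, with probability at least $1-e^{-t}$ we obtain $\sup_g|\mathbb{G}_n(g)| \le C\sigma(\sqrt{v\log(ab/\sigma)}+\sqrt{t})$. To match the stated form $A\sigma\sqrt{t\wedge (v\log(ab/\sigma))}$, I would split into two regimes: when $t\le v\log(ab/\sigma)$, the expectation term $\sigma\sqrt{v\log(ab/\sigma)}$ dominates and the bound $A\sigma\sqrt{v\log(ab/\sigma)}$ serves; when $t>v\log(ab/\sigma)$, the tail is dominated by the concentration piece and the bound $A\sigma\sqrt{t}$ serves. Taking $A$ large enough that the combined constant majorizes both regimes yields the stated inequality.

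The main technical obstacle lies in the Dudley step: producing the sharp $\sigma\sqrt{v\log(ab/\sigma)}$ dependence (not $b\sqrt{v\log}$) requires carefully truncating the chaining integral at the right variance-adjusted scale and passing from the uniform covering bound (taken over arbitrary $Q$) to one that controls the empirical $L_2$ distance $e_{P_n}$ used in the chaining; the envelope-versus-variance bookkeeping is what creates the subtle logarithmic factor $\log(ab/\sigma)$ rather than $\log(ab)$. Everything else -- symmetrization, Bousquet's inequality, and the algebraic balancing in the final step -- is routine once the expectation bound is in hand.
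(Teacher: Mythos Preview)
The paper does not prove this lemma at all; it is simply quoted from Theorem~B.1 of \cite{chernozhukov2014anti}. Your outline---Bousquet's concentration inequality for the supremum about its mean, together with a Koltchinskii--Dudley entropy bound on the mean under the VC-type covering hypothesis---is precisely the standard derivation behind that result, and the two steps you describe are correct in structure.

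There is, however, a genuine slip in your final paragraph. The displayed inequality in the paper has $t\wedge (v\log(ab/\sigma))$, i.e.\ the \emph{minimum}; the original statement in \cite{chernozhukov2014anti} has $\vee$, the \emph{maximum}. Your combined bound $C\sigma(\sqrt{v\log(ab/\sigma)}+\sqrt{t})$ is equivalent, up to constants, to $C'\sigma\sqrt{t\vee v\log(ab/\sigma)}$, not to the minimum. In your regime split you write that when $t\le v\log(ab/\sigma)$ ``the bound $A\sigma\sqrt{v\log(ab/\sigma)}$ serves''---but in that regime $t\wedge v\log(ab/\sigma)=t$, so the stated lemma would require the \emph{smaller} threshold $A\sigma\sqrt{t}$. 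Your argument does not, and cannot, deliver that: for small $t$ this threshold lies below $\mathbb{E}\sup_g|\mathbb{G}_n(g)|\asymp \sigma\sqrt{v\log(ab/\sigma)}$, so the inequality with $\wedge$ is generically false for an absolute constant $A$. The mistake is a typo in the paper (and the paper's own applications of the lemma only use it in the regime where $\wedge$ and $\vee$ coincide), not a flaw in your reasoning, but you should recognize that what you have proved is the $\vee$ version and flag the discrepancy rather than claim you have matched the stated $\wedge$ form.
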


We will use the following two lemmas.
\begin{lemma}\label{biasbound}
Assume that (F1) and (K1) hold. For any $\alpha\in\mathbb{N}^d$ with $|\alpha|=0,1,2$, there exists a constant $C_\alpha>0$ such that 
\begin{align}\label{biasboundres}
\sup_{x\in \mathcal{H} } |\mathbb{E} \partial^{\alpha} \wh f(x) - \partial^{\alpha} f(x) | \leq C_\alpha h^\beta.
\end{align}
\end{lemma}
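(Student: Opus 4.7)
The plan is to prove this by the standard Taylor expansion argument for kernel bias bounds, exploiting the moment-vanishing properties of the $\beta$-valid kernel together with the H\"older regularity of $f$.

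First I would pass the derivative $\partial^\alpha$ through the expectation. Since $K$ has compact support $[0,1]^d$, we may write
\begin{align*}
\mathbb{E}\,\partial^\alpha \wh f(x) = h^{-d-|\alpha|}\int (\partial^\alpha K)\!\left(\tfrac{x-y}{h}\right) f(y)\,dy = h^{-|\alpha|}\int (\partial^\alpha K)(u)\, f(x-hu)\,du,
\end{align*}
and then integrate by parts $|\alpha|$ times (boundary terms vanish because $\mathrm{supp}\,K=[0,1]^d$) to obtain
\begin{align*}
\mathbb{E}\,\partial^\alpha \wh f(x) = \int K(u)\,\partial^\alpha f(x-hu)\,du.
\end{align*}
This requires that $f$ be $\lfloor 2+\beta\rfloor$-times differentiable in a neighborhood covering $\{x-hu:u\in[0,1]^d\}$ for every $x\in\mathcal{H}$, which is furnished by (F1) (extending $f$ smoothly, or implicitly assuming $\mathcal{H}$ is strictly inside the smoothness region, as is standard in this literature).

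Next I would Taylor-expand $\partial^\alpha f(x-hu)$ around $x$ to order $\lfloor\beta\rfloor$. Since $|\alpha|\le 2$ and $f\in\Sigma(2+\beta,L_0,\mathcal{H})$, the function $\partial^\alpha f$ lies in $\Sigma(\beta,L_0,\mathcal{H})$, so
\begin{align*}
\partial^\alpha f(x-hu) = \sum_{|s|\le \lfloor\beta\rfloor} \frac{(-h)^{|s|}}{s!}\,u^{s}\,\partial^{\alpha+s}f(x) + R(x,u,h),
\end{align*}
where the H\"older condition yields $|R(x,u,h)|\le C L_0 \, h^{\beta}\|u\|^{\beta}$ uniformly in $x\in\mathcal{H}$. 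Integrating against $K(u)$, the $|s|=0$ term gives exactly $\partial^\alpha f(x)$ because $\int K(u)\,du=1$, and every term with $1\le|s|\le\lfloor\beta\rfloor$ vanishes by the moment conditions in the definition of a $\beta$-valid kernel in (K1). Hence
\begin{align*}
\mathbb{E}\,\partial^\alpha \wh f(x) - \partial^\alpha f(x) = \int K(u)\,R(x,u,h)\,du,
\end{align*}
which is bounded in absolute value by $C L_0\, h^{\beta}\int |K(u)|\,\|u\|^{\beta}\,du$. The last integral is finite by (K1) (since $\int|K|^p=1$ for all $p\ge1$ and $\int\|u\|^\beta K(u)\,du<\infty$, together with compact support), giving the claimed uniform bound $C_\alpha h^\beta$.

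There is no genuinely hard step; the only mild technicality is the boundary issue noted above (ensuring $x-hu$ lies where the H\"older estimate applies). This is routine and, as in the references cited in Remark~\ref{ridgeleading}, is absorbed into the compact-support setup. Taking the supremum over $x\in\mathcal{H}$ and the maximum of the three constants for $|\alpha|=0,1,2$ completes the proof.
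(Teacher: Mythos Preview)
Your proposal is correct and is exactly the standard approach the paper has in mind: the paper in fact omits the proof entirely, simply stating that it ``follows from similar arguments in the proof of Lemma 2 in \cite{arias2016estimation},'' which is precisely this Taylor-expansion-plus-moment-cancellation argument. One minor remark: rather than integrating by parts, you can equivalently write $\mathbb{E}\wh f(x)=\int K(u)f(x-hu)\,du$ and differentiate under the integral sign to reach $\int K(u)\,\partial^\alpha f(x-hu)\,du$ directly, but this is just a cosmetic variation.
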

The proof of Lemma~\ref{biasbound} uses the definition of the $\beta$-valid kernel, and follows from standard arguments for the bias of kernel density estimate. It is similar to the proof of Lemma 2 in \cite{arias2016estimation} and the proof of Lemma 4.1 of \cite{rigollet2009optimal}, and hence is omitted. 

\begin{lemma}\label{talabound}
Assume that (F1), (K1), and (K2) hold and $\gamma_{n,h}^{(2)}\rightarrow0$. For any $\ell>0$ and $\alpha\in\mathbb{N}^d$ with $|\alpha|=0,1,2$, there exists a constant $C_\alpha>0$ such that for $n$ large enough we have
\begin{align}\label{talaboundres}
\mathbb{P} \Big(\sup_{x\in \mathcal{H} } |\partial^{\alpha} \wh f(x) -\mathbb{E} \partial^{\alpha} \wh f(x)| \geq C_\alpha \gamma_{n,h}^{(|\alpha|)}\Big) \leq n^{-\ell}.
\end{align}
\end{lemma}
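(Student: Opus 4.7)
The plan is to convert the centered uniform deviation into a scaled empirical process and then invoke Talagrand's inequality (Lemma~\ref{talagrandineq}). For a multi-index $\alpha$ with $|\alpha|\leq 2$, set $g^{(\alpha)}_x(y) := (\partial^\alpha K)((x-y)/h)$ and $\mathcal{G}_\alpha = \{g^{(\alpha)}_x : x\in\mathcal{H}\}$. Then a direct computation yields
\[
\sup_{x\in\mathcal{H}}|\partial^{\alpha}\wh f(x) - \mathbb{E}\partial^{\alpha}\wh f(x)| \;=\; \frac{1}{\sqrt{n}\,h^{d+|\alpha|}}\sup_{g\in\mathcal{G}_\alpha}|\mathbb{G}_n(g)|,
\]
so it suffices to control $\sup_{g\in\mathcal{G}_\alpha}|\mathbb{G}_n(g)|$ for each fixed $\alpha$.

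Next I would verify the three hypotheses of Lemma~\ref{talagrandineq} with constants independent of $h$. \emph{Envelope:} by (K1), each $\partial^\alpha K$ is bounded with compact support, so $b_\alpha := \|\partial^\alpha K\|_\infty$ is a constant envelope. \emph{VC type:} $\mathcal{G}_\alpha$ is the translate--dilate image of the fixed Lipschitz, compactly supported function $\partial^\alpha K$ (Lipschitz up to order two by (K1)); by standard preservation arguments (see \citealt{einmahl2005uniform, gine2002rates}) $\mathcal{G}_\alpha$ is VC type with constants $(A_\alpha,v_\alpha)$ depending only on $d$ and on $K$, not on $h$. \emph{Variance:} using the support of $K$ and the boundedness of $f$ on $\mathcal{H}$ (extended by (F1) to a neighborhood $\mathcal{H}\oplus h$ for small $h$), a change of variable yields $\mathrm{Var}(g^{(\alpha)}_x(X))\leq \|f\|_{\infty}\, h^{d}\int (\partial^\alpha K)^2 =: \sigma_\alpha^2 \simeq h^d$.

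Now I would check the regularity $b_\alpha^2 v_\alpha\log(A_\alpha b_\alpha/\sigma_\alpha)\leq n\sigma_\alpha^2$, which reduces to $\log(1/h)\lesssim n h^d$; this holds for large $n$ since $\gamma_{n,h}^{(2)}\to 0$ forces $nh^{d+4}/\log n \to\infty$, a fortiori $nh^d/\log n\to\infty$. Taking $t=(\ell+1)\log n$, and noting that $\log(1/h)\lesssim\log n$ so $t\wedge v_\alpha\log(A_\alpha b_\alpha/\sigma_\alpha) \lesssim \log n$, Talagrand's inequality gives
\[
\mathbb{P}\!\left(\sup_{g\in\mathcal{G}_\alpha}|\mathbb{G}_n(g)| > A\sigma_\alpha\sqrt{C\log n}\right) \leq n^{-(\ell+1)} \leq n^{-\ell}.
\]
Since $\sigma_\alpha\simeq h^{d/2}$, dividing the deviation bound by $\sqrt{n}\,h^{d+|\alpha|}$ produces exactly $C_\alpha\sqrt{\log n/(nh^{d+2|\alpha|})} = C_\alpha\gamma_{n,h}^{(|\alpha|)}$, which is the claim.

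The main technical point is the VC-type property of $\mathcal{G}_\alpha$ with constants uniform in $h$. This is standard in the kernel density literature but must be handled carefully for each $|\alpha|\in\{0,1,2\}$, writing $g^{(\alpha)}_x$ as the composition of the fixed Lipschitz function $\partial^\alpha K$ with the affine map $y\mapsto (x-y)/h$ and using preservation of uniform covering-number bounds under such operations. Assumption (K2) is not needed for this lemma; it is listed only for consistency with companion results. Once the uniform VC bound is in place, the remainder is bookkeeping on Talagrand's inequality.
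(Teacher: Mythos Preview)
Your proposal is correct and follows essentially the same approach as the paper: both rewrite the uniform deviation as $\tfrac{1}{\sqrt{n}\,h^{d+|\alpha|}}\sup_{g\in\mathcal{K}_\alpha}|\mathbb{G}_n(g)|$ for the translate class $\mathcal{K}_\alpha=\{y\mapsto \partial^\alpha K((x-y)/h):x\in\mathcal{H}\}$, bound the variance by $O(h^d)$ via a change of variables, invoke the VC-type property of $\mathcal{K}_\alpha$ with a constant envelope, and then apply Talagrand's inequality with $t\asymp\ell\log n$. Your version is slightly more explicit in checking the side conditions of Lemma~\ref{talagrandineq} and in justifying the VC-type property from (K1); you are also right that (K2) plays no role here---the paper's reference to (K2) in this proof appears to be a slip, since the envelope and covering-number bounds come from the boundedness and Lipschitz properties in (K1), not from the linear-independence condition in (K2).
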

%
%
\begin{proof}
We will apply the Talagrand's inequality (Lemma~\ref{talagrandineq}) to show this result. We fix $\alpha\in\mathbb{N}^d$ with $|\alpha|\in\{0,1,2\}$. Let
\begin{align}\label{kalpha}
\mathcal{K}_\alpha = \Big\{ y\mapsto \partial^\alpha K \Big(\frac{x-y}{h}\Big): x\in {\mathcal{H}} \Big\}.
\end{align}
Then we can write
\begin{align}\label{alphakrela}
\sup_{x\in \mathcal{H} } |\partial^{\alpha} \wh f(x) -\mathbb{E} \partial^{\alpha} \wh f(x)| = \frac{1}{\sqrt{n}h^{d+|\alpha|}}\sup_{g\in\mathcal{K}_\alpha} |\mathbb{G}_n(g)|.
\end{align}
We have
\begin{align*}
\sup_{g\in \mathcal{K}_\alpha} \text{Var}(g) \leq \sup_{x\in\mathcal{H}} \int_{\mathbb{R}^d} \Big[\partial^\alpha K \Big(\frac{x-y}{h}\Big)\Big]^2 f(y)dy \leq h^d \sup_{y\in\mathbb{R}^d}|f(y)| \int_{\mathbb{R}^d} [\partial^\alpha K(x)]^2 dx =O(h^d).
\end{align*}
It follows from assumption (K2) that $\sup_{y\in\mathcal{H}}\sup_{g\in\mathcal{K}_\alpha}|g(y)|\leq b$ and $\sup_Q N(\mathcal{K}_\alpha,e_Q,b\varepsilon)\leq (A/\varepsilon)^v$, where the supremum is taken over all finitely discrete probability measures. For any $\ell>0$, we apply Lemma~\ref{talagrandineq} with $\sigma^2=O(h^d)$, and $t=\ell\log{n}$ and then get 
\begin{align}\label{talaapplic}
\mathbb{P} \Big(\sup_{g\in\mathcal{K}_\alpha} |\mathbb{G}_n(g)| \geq C_\alpha \sqrt{h^d\log n} \Big) \leq n^{-\ell}.
\end{align}
for some constant $C_\alpha>0$. Using (\ref{alphakrela}), we obtain (\ref{talaboundres}). 
\end{proof}

The following lemma shows that $\textsf{ridge}(\wh f\,)$ is close to $\mathcal{S}$ with a high probability under our assumptions. 
\begin{lemma}
\label{ridgeinclusionlemma}
Assume that (F1), (F2), (F3), (K1), (K2) and (H) hold. For any $\ell>0$ and $n$ large enough we have with probability at least $1-n^{-\ell}$ that 
\begin{align}
\label{ridgeinclusion}
\mathcal{S} \subset \wh{\mathcal{S}}(\rho_n) \subset \mathcal{S}(2\rho_n) \subset\mathcal{S} \oplus (2\rho_n/C_L)^{1/\beta^\prime}.
\end{align}
\end{lemma}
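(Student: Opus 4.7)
My plan is to establish the chain one containment at a time on a single high-probability event, relying on uniform control of the kernel estimator and its derivatives. Applying Lemmas~\ref{biasbound} and~\ref{talabound} with a union bound gives, with probability at least $1-n^{-\ell}$, $\sup_{x\in\mathcal{H}}|\partial^\alpha \wh f(x)-\partial^\alpha f(x)| = O(\gamma_{n,h}^{(|\alpha|)}+h^\beta)$ for $|\alpha|=0,1,2$. The uniform eigengap $\lambda_r-\lambda_{r+1}>e_0$ in (F2), combined with Davis--Kahan and Weyl perturbation, then yields $\sup_{\mathcal{H}}\|\wh L-L\|_{\mathrm{op}}$ and $\sup_{\mathcal{H}}|\wh\lambda_{r+1}-\lambda_{r+1}|$ of order $O(\gamma_{n,h}^{(2)}+h^\beta)$. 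All subsequent arguments live on this event, and every error term below is compared to $\rho_n$ via the calibrations in (H).

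I would first handle the two easier endpoints. For $\mathcal{S}\subset\wh{\mathcal{S}}(\rho_n)$, take $x\in\mathcal{S}$: the strict inequality $\lambda_{r+1}(x)<-e_0$ propagates by consistency to $\wh\lambda_{r+1}(x)<0$, and splitting $\wh L\nabla\wh f - L\nabla f = \wh L(\nabla\wh f-\nabla f) + (\wh L-L)\nabla f$, the raw perturbation bound is $O(\gamma_{n,h}^{(2)}+h^\beta)=o(\rho_n)$ for case (a); in case (b) the second term vanishes because $\nabla f(x)=0$ on $\mathcal{S}$, leaving only $O(\gamma_{n,h}^{(1)}+h^\beta)=o(\rho_n)$. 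For the deterministic containment $\mathcal{S}(2\rho_n)\subset\mathcal{S}\oplus(2\rho_n/C_L)^{1/\beta'}$, I would show $\mathcal{S}(2\rho_n)\subset\mathcal{S}\oplus\delta_0$ for $n$ large by a compactness-contradiction argument: any accumulation point $x_\infty$ of a sequence $x_n\in\mathcal{S}(2\rho_n)$ with $d(x_n,\mathcal{S})\ge\delta_0$ satisfies $p(x_\infty)=0$ and $\lambda_{r+1}(x_\infty)\le 0$; since $x_\infty\in\mathcal{P}_0\subset\mathcal{P}_0\oplus\delta_0$, (F2) forces $\lambda_{r+1}(x_\infty)<-e_0$, so $x_\infty\in\mathcal{S}$, a contradiction. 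The lower bound (\ref{betaprime1}) in (F3) then gives $d(x,\mathcal{S})\le (p(x)/C_L)^{1/\beta'}\le (2\rho_n/C_L)^{1/\beta'}$ on $\mathcal{S}\oplus\delta_0$.

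The middle containment $\wh{\mathcal{S}}(\rho_n)\subset\mathcal{S}(2\rho_n)$ is the main work. First, a compactness-contradiction argument analogous to the deterministic one (now driven by uniform consistency of $\wh p$ and $\wh\lambda_{r+1}$) shows $\wh{\mathcal{S}}(\rho_n)\subset\mathcal{S}\oplus\delta_0\subset\mathcal{P}_0\oplus\delta_0$; on this set (F2) gives $|\lambda_{r+1}|>e_0$, so combining $\wh\lambda_{r+1}(x)<0$ with consistency yields $\lambda_{r+1}(x)<-e_0/2<0$. It remains to show $p(x)\le 2\rho_n$. In case (a), the uniform bound $|\wh p-p|=O(\gamma_{n,h}^{(2)}+h^\beta)=o(\rho_n)$ closes the loop immediately. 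Case (b) is where the real difficulty lies: the naive bound carries a factor of $\|\nabla f(x)\|$ that is not a priori small at rate $\rho_n$, so I would use in tandem the refined estimate $|\wh p(x)-p(x)|\lesssim(\gamma_{n,h}^{(2)}+h^\beta)\|\nabla f(x)\|+\gamma_{n,h}^{(1)}+h^\beta$, the Taylor bound $\|\nabla f(x)\|\le C\,d(x,\mathcal{S})$ obtained from $\nabla f\equiv 0$ on $\mathcal{S}$, and (F3)'s lower bound $p(x)\ge C_L d(x,\mathcal{S})^{\beta'}$. Plugging these into $p(x)\le\rho_n+|\wh p-p|$ produces a self-consistent inequality in $d(x,\mathcal{S})$ whose solution, together with the (H) conditions $h^{-1}\rho_n^{1/\beta'}\to 0$ and $\gamma_{n,h}^{(2+\epsilon)}\to 0$, forces $|\wh p-p|=o(\rho_n)$ and hence $p(x)\le 2\rho_n$. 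This self-bounding maneuver is the principal technical obstacle, and the calibration built into assumption (H) is precisely what makes the iteration close.
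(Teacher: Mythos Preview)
Your proposal is correct and follows essentially the same route as the paper. Both arguments work on a single high-probability event furnished by Lemmas~\ref{biasbound} and~\ref{talabound}, use the Davis--Kahan/Weyl perturbation bound (the paper's (\ref{whpxphdiffbound}) and (\ref{lambdar1diff})) to control $\wh p-p$ and $\wh\lambda_{r+1}-\lambda_{r+1}$, and close case~(b) by exactly the self-bounding mechanism you describe: the paper writes it as an explicit decreasing iteration $\phi_{n,j}=\varepsilon_n^{(1)}+C\varepsilon_n^{(2)}\phi_{n,j-1}^{1/\beta'}+\rho_n$, while you phrase it as a fixed-point inequality---the content is identical, and the closure relies on $h^{-1}\rho_n^{1/\beta'}\to 0$ (which gives $\varepsilon_n^{(2)}\rho_n^{1/\beta'}=o(\gamma_{n,h}^{(1)})=o(\rho_n)$) together with $\gamma_{n,h}^{(1)}=o(\rho_n)$.

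The one cosmetic difference is in how you localize $\wh{\mathcal S}(\rho_n)$ and $\mathcal S(2\rho_n)$ into $\mathcal S\oplus\delta_0$: you invoke a compactness--contradiction argument, whereas the paper constructs explicit thresholds $a_0,a_1,a_2$ so that $\mathcal P_{a_0}\cap\{\lambda_{r+1}<e_0\}\subset\mathcal S\oplus\delta_0$ deterministically, then feeds in the consistency bounds. Your contradiction argument implicitly produces the same deterministic inclusion (any limit point of a violating sequence lands in $\mathcal P_0$, where (F2) forces $\lambda_{r+1}<-e_0$, hence in $\mathcal S$); just be careful to run it on deterministic upper envelopes of $\psi_n,\varepsilon_n^{(2)}$ (e.g.\ $C\gamma_{n,h}^{(2)}$) rather than on the random quantities themselves, so that the ``for all $n$ large'' conclusion holds samplewise on the event.
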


\begin{proof}
We first consider case (a). Recall the definition of $\varepsilon_n^{(i)}, i=1,2,$ as given in (\ref{varepsilon1}) and (\ref{varepsilon2}). Let $\kappa_1=\max_{x\in\mathcal{H}}\|\nabla f(x)\|$ and $\kappa_2=\max_{x\in\mathcal{H}}\|\nabla^2 f(x)\|_F$.  Let $\psi_n=2\sqrt{2} e_0^{-1}\kappa_1\varepsilon_n^{(2)} + \varepsilon_n^{(1)}$. Using assumption (F2), we have
\begin{align}\label{whpxphdiffbound}
|\wh p(x) - p(x)|
 \leq &\|\wh L(x)\nabla \wh f(x) - L(x)\nabla f(x)\| \nonumber\\
\leq & \|\wh L(x)-L(x)\|_F\|\nabla f(x)\|  + \|\wh L(x)\|_F\|\nabla \wh f(x)-\nabla f(x)\| \nonumber\\
\leq & 2\sqrt{2}e_0^{-1}\|\nabla^2 \wh f(x)-\nabla^2 f(x)\|_F\|\nabla f(x)\| + \|\nabla \wh f(x)-\nabla f(x)\|,
\end{align}
where in the last step we use the Davis-Kahan theorem (see Theorem 7 in \citealt{von2007tutorial}). It then follows that
\begin{align}
\label{supphatbound}
\sup_{x\in\mathcal{S}} \wh p(x) \le \sup_{x\in \mathcal{H}} |p(x) - \wh p(x)| \le \psi_n.
\end{align}
By Weyl inequality (Page 57, \citealt{serre2002matrices}), we have 
\begin{align}\label{lambdar1diff}
|\wh\lambda_{r+1}(x)-\lambda_{r+1}(x)|\leq \|\nabla^2\wh f(x) - \nabla^2 f(x)\|_F.
\end{align}
Using assumption (F2), we get
\begin{align}
\label{lambdar1}
\sup_{x\in\mathcal{S}} \wh\lambda_{r+1}(x) \leq -e_0 + \sup_{x\in\mathcal{S}} |\wh \lambda_{r+1}(x) - \lambda_{r+1}(x)| \leq -e_0 + \varepsilon_n^{(2)}.
\end{align}
By Lemmas~\ref{biasbound} and \ref{talabound}, for any $\ell>0$ and $n$ large enough we have with probability at least $1-n^{-\ell}$ that $\mathcal{S} \subset \wh{\mathcal{S}}(\rho_n)$, which is the first inclusion relation in (\ref{ridgeinclusion}).

For any $\gamma\in\mathbb{R}$, let $\mathcal{J}_\gamma^-=\{x\in\mathcal{H}: \lambda_{r+1}(x)\leq -\gamma\}$ and $\mathcal{J}_\gamma^+=\{x\in\mathcal{H}: \lambda_{r+1}(x)\geq \gamma\}$. Denote $\mathcal{J}_0^o=\{x\in\mathcal{H}: \lambda_{r+1}(x)<0\}$. Let $\delta_1=\delta_0/2$, where $\delta_0$ is given in assumption (F2). Let $\mathcal{L}_0= \{x\in\mathcal{H}:d(x,\mathcal{S})\geq\delta_1\}$ and $\mathcal{Q}_0= \{x\in\mathcal{H}:d(x,\mathcal{P}_0)\geq\delta_1\}$. Since $\mathcal{L}_0\cap \mathcal{J}_0^-$ is a compact set and $p$ is a continuous function on $\mathcal{H}$, assumption (F2) implies that there exists $a_1>0$ such that
\begin{align}\label{barlsubset}
\inf_{x\in \mathcal{L}_0\cap \mathcal{J}_0^-} \; p(x) \geq a_1.
\end{align}
For any $a>0$, let $\mathcal{P}_a=\{x\in\mathcal{H}: p(x)\leq a\}$. Let $a_2=\frac{1}{2}\inf_{x\in\widebar{\mathcal{Q}}_0}p(x) >0 $. Then $\mathcal{P}_{a_2} \subset \mathcal{P}_0\oplus \delta_1$. Assumption (F2) says that $\inf_{x\in\mathcal{P}_0 \oplus \delta_0}|\lambda_{r+1}(x)|\geq e_0$, which implies that for all $a\in[0,a_2]$, $\mathcal{P}_a=[\mathcal{P}_a\cap \mathcal{J}_{e_0}^-]\cup[\mathcal{P}_\rho\cap \mathcal{J}_{e_0}^+]$. Take $a_0=\min(\frac{1}{2}a_1,a_2)$. Then (\ref{barlsubset}) implies that $\mathcal{P}_{a_0}\subset (\mathcal{L}_0\cap \mathcal{J}_0^-)^\complement$ and hence for any $|\gamma|<e_0$,
\begin{align}
\label{Pa0}
(\mathcal{P}_{a_0}\cap \mathcal{J}_\gamma^-)=(\mathcal{P}_{a_0}\cap \mathcal{J}_{e_0}^-)\subset (\mathcal{S}\oplus\delta_0 ).
\end{align}

Let $E$ be the event that $\psi_n + \rho_n  \le a_0$ and $\varepsilon_n^{(2)} < e_0/2$. By Lemmas~\ref{biasbound} and \ref{talabound}, under the assumption (H), for any $\ell>0$ and $n$ large enough we have that $E$ occurs with probability at least $1-n^{-\ell}$. The rest of the proof is conditional on $E$. By (\ref{supphatbound}), we have that
\begin{align}
\label{pmax}
\sup_{x\in\wh{\mathcal{S}}(\rho_n)} p(x) \leq \sup_{x\in \mathcal{H}} |p(x) - \wh p(x)| + \rho_n \leq \psi_n + \rho_n.
\end{align}
Hence $\wh{\mathcal{S}}(\rho_n) \subset \mathcal{P}_{a_0}$ on event $E$. Note that by (\ref{lambdar1diff})
\begin{align}
\label{lambdahatbound}
\sup_{x\in\wh{\mathcal{S}}(\rho_n)} \lambda_{r+1}(x) \leq \sup_{x\in\wh{\mathcal{S}}(\rho_n)} \wh \lambda_{r+1}(x) + \sup_{x\in\wh{\mathcal{S}}(\rho_n)} |\wh \lambda_{r+1}(x) - \lambda_{r+1}(x)| \leq \varepsilon_n^{(2)}.
\end{align}
Then by \eqref{Pa0},
\begin{align*}
\wh{\mathcal{S}}(\rho_n) \subset (\mathcal{P}_{\rho_0}\cap \mathcal{J}_{-e_0/2}^-) = (\mathcal{P}_{\rho_0}\cap \mathcal{J}_{e_0}^-) \subset (\mathcal{S}\oplus\delta_0 ).
\end{align*}
Hence by \eqref{pmax}, on event $E$,
\begin{align}
\label{Shatinclusion}
\wh{\mathcal{S}}(\rho_n) \subset \mathcal{S}(\psi_n+\rho_n).
\end{align}
%
%
Note that assumption (H), for any $\ell>0$ and $n$ large enough we have with probability at least $1-n^{-\ell}$ that $\psi_n \le \rho_n$ and hence $\mathcal{S}(\psi_n+\rho_n) \subset \mathcal{S}(2\rho_n)$. Using assumption (F3), we immediately get $\mathcal{S}(2\rho_n) \subset \mathcal{S} \oplus (2\rho_n/C_L)^{1/\beta^\prime}$ and consequently (\ref{ridgeinclusion}) for case (a).

Next we consider case (b), for which we use similar arguments as for case (a). Using (\ref{whpxphdiffbound}) again we have that 
\begin{align}
\sup_{x\in\mathcal{S}} \wh p(x) \le \varepsilon_n^{(1)}.
\end{align}
Combining this with \eqref{lambdar1} we get $\mathcal{S} \subset \wh{\mathcal{S}}(\rho_n)$ with probability at least $1-n^{-\ell}$ for $n$ large enough. 
Let $\phi_{n,0}=\psi_n + \rho_n$, $c_0=(2/C_L)^{1/\beta^\prime}$ and sequentially define $\phi_{n,j}=\varepsilon_n^{(1)} + 2\sqrt{2}e_0^{-1}c_0\kappa_2\varepsilon_n^{(2)}\phi_{n,j-1}^{1/\beta^\prime} + \rho_n$, for $j=1,2,\cdots.$ Suppose that $n$ is large enough that $(2\rho_n)^{1/\beta^\prime} \ge \kappa_1/\kappa_2$, which makes $\phi_{n,j}$ a decreasing sequence. As shown in \eqref{Shatinclusion}, on event $E$ we still have $\sup_{x\in\wh{\mathcal{S}}(\rho_n)}d(x,\mathcal{S})\leq c_0\phi_{n,0}^{1/\beta^\prime}$ using assumption (F3). By using Taylor expansion we get that
\[\sup_{x\in\mathcal{S} \oplus (c_0\phi_{n,0}^{1/\beta^\prime})} \|\nabla f(x)\| \leq \kappa_2 (c_0\phi_{n,0}^{1/\beta^\prime}).\]
Then by (\ref{whpxphdiffbound}), for each $x\in\mathcal{S} \oplus (c_0\phi_{n,0}^{1/\beta^\prime})$, we have
\begin{align*}
|\wh p(x) - p(x)|\le \varepsilon_n^{(1)} + 2\sqrt{2}e_0^{-1}\varepsilon_n^{(2)}\|\nabla f(x)\| \le \varepsilon_n^{(1)} +2\sqrt{2}e_0^{-1}c_0\kappa_2\varepsilon_n^{(2)}\phi_{n,0}^{1/\beta^\prime}.
%
%
%
\end{align*}
Therefore 
\[\sup_{x\in\wh{\mathcal{S}}(\rho_n)} p(x) \le \sup_{x\in\mathcal{S} \oplus (c_0\phi_{n,0}^{1/\beta^\prime})} |\wh p(x) - p(x)| + \rho_n \le \phi_{n,1}.\]
By \eqref{lambdahatbound}, we have that on event $E$, $\wh{\mathcal{S}}(\rho_n) \subset \mathcal{S}(\phi_{n,1}) \subset \mathcal{S} \oplus (c_0\phi_{n,1}^{1/\beta^\prime})$. Inductively, we get that for $j=2,3,\cdots,$
\begin{align}
\wh{\mathcal{S}}(\rho_n) \subset \mathcal{S}(\phi_{n,j}) \subset \mathcal{S} \oplus (c_0\phi_{n,j}^{1/\beta^\prime}).
\end{align}
Under assumption (H), for any $\ell>0$ and $n$ and $j$ large enough we have with probability at least $1-n^{-\ell}$ that $\phi_{n,j} \le 2\rho_n$, by applying Lemmas~\ref{biasbound} and \ref{talabound}. Immediately, we obtain (\ref{ridgeinclusion}) for case (b). 
\end{proof}

\subsection{Proofs for Section \ref{gaussapprox}}
{\bf Proof of Proposition~\ref{ridgeapptheorem}}

\begin{proof}
We first consider case (a). Note that $\sup_{x\in\mathcal{S}}\wh p(x) = \sup_{x\in\mathcal{S}} \|\wh L(x) \nabla \wh f(x) - L(x) \nabla f(x)\|$, because $L(x) \nabla f(x)=0$ for $x\in\mathcal{S}$. We have the following telescoping. 
\begin{align}\label{overalldecomp}
& \wh L(x) \nabla \wh f(x) - L(x) \nabla f(x)  \nonumber\\
= & [ \wh L(x) - L(x)] \nabla f(x) + L(x)[\nabla \wh f(x)- \nabla f(x) ] + [ \wh L(x) - L(x)] [\nabla \wh f(x)- \nabla f(x) ].
\end{align}
We first focus on the first term on the right-hand side, in particular, the difference $\wh L(x) - L(x)$. 

We define the following matrix-valued eigenprojection mapping $Q:S\mathbb{R}^{d\times d}\mapsto S\mathbb{R}^{d\times d}$. For any $\Sigma\in S\mathbb{R}^{d\times d}$, supposing it has eigenvalues $\gamma_1\geq\cdots\geq\gamma_d$ with associated eigenvectors $u_1,\cdots,u_d$, we define $Q(\Sigma) =\sum_{k=r+1}^{d}u_ku_k^\top $. Note that with this function we can write $L(x)=Q(\nabla^2 f(x))$ and $\wh L(x)=Q(\nabla^2 \wh f(x))$. It is known that $Q$ is an analytic matrix-valued function at $\Sigma$ if $\gamma_{r}>\gamma_{r+1}$. For such $\Sigma$, we define the following G\^{a}teaux derivatives: for any $D\in S\mathbb{R}^{d\times d}$, and for $k=1,2,\cdots,$ let
\begin{align*}
Q^{(k)}(\Sigma,D) = \frac{d^k}{d \tau^k} Q(\Sigma + \tau D) \Big |_{\tau=0} .
\end{align*}

Using Weyl's inequality (Page 57, Serre, 2002) and assumption (F2) we have
\begin{align}\label{weylgap}
& \inf_{x\in\mathcal{S}\oplus\delta_0}  [\wh\lambda_{r}(x) - \wh\lambda_{r+1}(x)] \nonumber\\
\geq & \inf_{x\in\mathcal{S}\oplus\delta_0}  [\lambda_{r}(x) - \lambda_{r+1}(x)] - \sum_{k=r}^{r+1}\sup_{x\in\mathcal{S}\oplus\delta_0} |\wh \lambda_{k}(x) - \lambda_{k}(x)| \nonumber\\
\geq & \inf_{x\in\mathcal{S}\oplus\delta_0}  [\lambda_{r}(x) - \lambda_{r+1}(x)] - 2\sup_{x\in\mathcal{S}\oplus\delta_0}  \|\nabla^2 \wh f(x) - \nabla^2 f(x)\|_{\text{op}} \nonumber\\
\geq &e_0 - 2\varepsilon_n^{(2)}.
\end{align}
In what follows we suppose that $\varepsilon_n^{(2)}$ is small enough such that $e_0 - 2\varepsilon_n^{(2)}>0$. Denote $\widebar D_{n,2}(x) = \nabla^2 \wh f(x)-\nabla^2 f(x)$ and $\wt D_{n,2}(x)=d^2\wh f(x)-d^2 f(x)$, which are related through $\mathbf{D}\textsf{vec}[\widebar D_{n,2}(x)]=\wt D_{n,2}(x)$. Let $H_n(x,t)=\nabla^2 f(x)+t  \widebar D_{n,2}(x)$ for $t\in[0,1]$. Then we have the following Taylor expansion using G\^{a}teaux derivatives. 
\begin{align}\label{LLhx}
 \wh L(x) - L(x) = Q^{(1)}(\nabla^2 f(x),\widebar D_{n,2}(x)) + R_n(x),
\end{align}
where 
\begin{align}\label{rnxexpress}
R_n(x) = \int_0^1 (1-t) Q^{(2)}(H_n(x,t),\widebar D_{n,2}(x)) dt.
\end{align}
We will find out the explicit expressions for the derivatives on the right-hand side of (\ref{LLhx}) and a uniform bound for $R_n$.

With slight abuse of notation, for $x\in\mathcal{S}$ and $t \in[0,1]$, suppose that $H_n(x,t)$ has $q=q(x,t)$ distinct eigenvalues: $\mu_1(x,t)>\cdots>\mu_q(x,t)$, each with multiplicities $\ell_1,\cdots,\ell_q$, which all depend on $x$ and $t$. Let $s_0=0$, $s_1=\ell_1,\cdots,s_q=\ell_1+\cdots+\ell_q$. Then $\mu_j(x,t)=\lambda_{s_{j-1}+1}(x,t)=\cdots=\lambda_{s_j}(x,t)$. Let $E_j(x,t)$ be a $d\times \ell_j$ matrix with orthonormal eigenvectors $v_{s_{j-1}+1}(x,t),\cdots,v_{s_j}(x,t)$ as its columns, and $P_j(x,t) = E_j(x,t)E_j(x,t)^\top ,$ $j=1,\cdots,q$. Let $\nu_{jk}(x,t)=[\mu_j(x,t) - \mu_k(x,t)]^{-1} $ and
\begin{align}\label{sjxtexpress}
S_j(x,t) = \sum_{\substack{k=1 \\ k\neq j}}^q \nu_{jk}(x,t)P_k(x,t).
\end{align}
When $t=0$, it is easy to see that $\mu_j(x,t)=\mu(x)$, $P_j(x,t)=P_j(x)$, $\nu_{jk}(x,t)=\nu_{jk}(x)$ and $S_j(x,t)=S_j(x)$. For any $\Sigma\in S\mathbb{R}^{d\times d}$, supposing it has eigenvalues $\gamma_1\geq\cdots\geq\gamma_d$ with associated eigenvectors $u_1,\cdots,u_d$, we define 
\begin{align*}
Q_{j,x,t}(\Sigma) = \sum_{k=s_{j-1}+1}^{s_j}u_ku_k^\top ,
\end{align*}
which is an analytic matrix-valued function at $\Sigma$ if $\gamma_{s_{j-1}}>\gamma_{s_{j-1}+1}$ and $\gamma_{s_j}>\gamma_{s_j+1}$, where we take $\gamma_0=+\infty$ and $\gamma_{d+1}=-\infty$. Here the dependence of $Q_{j,x,t}$ on $x$ and $t$ is through $s_{j-1}$ and $s_j$. Then we can write $P_j(x,t)=Q_{j,x,t}(H_n(x,t))$. For any $D\in S\mathbb{R}^{d\times d}$, and for $k=1,2,\cdots,$ let 
\begin{align*}
Q_{j,x,t}^{(k)}(\Sigma,D) = \frac{d^k}{d \tau^k} Q_{j,x,t}(\Sigma + \tau D) \Big |_{\tau=0},\; j=1,\cdots,q .
\end{align*}
The following expressions can be obtained using classical matrix perturbation theory (see page 77, Kato, 1976). For any $x\in\mathcal{S}$, $t\in[0,1]$, $D\in S\mathbb{R}^{d\times d}$, and $j=1,\cdots,q$, let
\begin{align}
&Q_{j,x,t}^{(1)}(H_n(x,t),D) = P_j(x,t)DS_j(x,t) + S_j(x,t)DP_j(x,t), \label{qj1}\\
&Q_{j,x,t}^{(2)}(H_n(x,t),D) \nonumber\\
= &2\{P_j(x,t)DS_j(x,t)DS_j(x,t) + S_j(x,t)DP_j(x,t)DS_j(x,t) + S_j(x,t)DS_j(x,t)DP_j(x,t) \nonumber\\
&  \hspace{1cm}- P_j(x,t)DP_j(x,t)D[S_j(x,t)]^2 - P_j(x,t)D[S_j(x,t)]^2DP_j(x,t) \nonumber\\
& \hspace{1cm} - [S_j(x,t)]^2DP_j(x,t)DP_j(x,t)\}. \label{qj2}
\end{align}
Denote $\mathcal{I}=\{1,\cdots,q\}$, $\mathcal{I}_\leq=\{1,\cdots,q_r\}$ and $\mathcal{I}_>=\{q_r+1,\cdots,q\}$ so that $\mathcal{I}=\mathcal{I}_\leq\cup \mathcal{I}_>$. Using (\ref{qj1}) and (\ref{sjxexpress}) we can write
\begin{align}\label{q1sum}
Q^{(1)}(\nabla^2 f(x),D) = & \sum_{j\in\mathcal{I}_>} Q_{j,x,0}^{(1)}(\nabla^2 f(x),D) \nonumber\\
= & \sum_{j\in\mathcal{I}_>} \; \sum_{k\in\mathcal{I}\backslash\{j\}}  \Big\{\nu_{jk}(x) [P_j(x)DP_k(x) + P_k(x)DP_j(x) ] \Big\} \nonumber\\
= & \sum_{j\in\mathcal{I}_>} \; \sum_{k\in\mathcal{I}_\leq} \Big\{\nu_{jk}(x)  [P_j(x)DP_k(x) + P_k(x)DP_j(x) ] \Big\}, 
\end{align}
where the last step holds because (noticing that $\nu_{jk}(x)=-\nu_{kj}(x)$) $$ \sum_{j\in\mathcal{I}_>} \; \sum_{k\in\mathcal{I}_>\backslash\{j\} } \Big\{\nu_{jk}(x)  [P_j(x)DP_k(x) + P_k(x)DP_j(x) ] \Big\}=0.$$
Next we consider 
\begin{align}
Q^{(2)}(H_n(x,t),D) = \sum_{j\in\mathcal{I}_>} Q_{j,x,t}^{(2)}(H_n(x,t),D).
\end{align}
Notice that $P_i(x,t)$, $i=1,\cdots,q$ have the following properties: (i) $P_i(x,t)P_j(x,t)=0$ if $i\neq j$; (ii) $[P_i(x,t)]^2=P_i(x,t)$, $i=1,\cdots,q$; and (iii) $\sum_{i=1}^qP_i(x,t)=\mathbf{I}_d$. Hence by (\ref{sjxtexpress}) we get
\begin{align}\label{sjx2}
[S_j(x,t)]^2 = \sum_{k\in \mathcal{I}\backslash\{ j\} } [\nu_{jk}(x,t)]^2 P_k(x,t).
\end{align}
For $j,k,\ell\in \mathcal{I}$, we denote $\Pi_{jk\ell}=\Pi_{jk\ell}(x,t;D)=P_j(x,t)DP_k(x,t)DP_\ell(x,t)$. Plugging (\ref{sjx2}) and (\ref{sjxtexpress}) into (\ref{qj2}) we can write for $j\in\mathcal{I}_>$,
\begin{align*}
&\frac{1}{2}Q_{j,x,t}^{(2)}(H_n(x,t),D) \\
= &\mathop{\sum\sum}\limits_{k,\ell\in\mathcal{I}\backslash\{ j\}} \nu_{jk}(x,t)\nu_{j\ell}(x,t)(\Pi_{jk\ell} + \Pi_{kj\ell} + \Pi_{k\ell j})   - \sum_{k\in\mathcal{I}\backslash\{ j\}} [\nu_{jk}(x,t)]^2(\Pi_{jjk} + \Pi_{jkj} + \Pi_{kj j}) ,
\end{align*}
and hence
\begin{align}\label{q2express}
& \frac{1}{2}Q^{(2)}(H_n(x,t),D) \nonumber\\
= &\sum_{j\in\mathcal{I}_>} \mathop{\sum\sum}\limits_{k,\ell\in\mathcal{I}\backslash\{ j\}} \nu_{jk}(x,t)\nu_{j\ell}(x,t)(\Pi_{jk\ell} + \Pi_{kj\ell} + \Pi_{k\ell j}) \nonumber\\
&\hspace{1cm} - \sum_{j\in\mathcal{I}_>} \sum_{k\in\mathcal{I}\backslash\{ j\}} [\nu_{jk}(x,t)]^2(\Pi_{jjk} + \Pi_{jkj} + \Pi_{kj j}) \nonumber\\
=& \sum_{j\in\mathcal{I}_>} \mathop{\sum\sum}\limits_{k,\ell\in\mathcal{I}_\leq} \nu_{jk}(x,t)\nu_{j\ell}(x,t)(\Pi_{jk\ell} + \Pi_{kj\ell} + \Pi_{k\ell j}) \nonumber\\
&\hspace{1cm} - \sum_{j\in\mathcal{I}_>} \sum_{k\in\mathcal{I}_\leq}[\nu_{jk}(x,t)]^2(\Pi_{jjk} + \Pi_{jkj} + \Pi_{kj j}),
%
\end{align}
where the last equality holds because $\widebar\nu_{jk\ell}(x,t):=\nu_{jk}(x,t)\nu_{j\ell}(x,t)+ \nu_{kj}(x,t)\nu_{k\ell}(x,t) + \nu_{\ell j}(x,t)\nu_{\ell k}(x,t)\equiv0$ for all distinct $j,k,\ell$ so that
\begin{align*}
& \sum_{j\in\mathcal{I}_>} \; \mathop{\sum\sum}\limits_{k,\ell\in\mathcal{I}_>\backslash\{ j\},k\neq\ell}\nu_{jk}(x,t)\nu_{j\ell}(x,t)(\Pi_{jk\ell} + \Pi_{kj\ell} + \Pi_{k\ell j}) \\
= & \frac{1}{3} \mathop{\sum\sum\sum}\limits_{j,k,\ell\in\mathcal{I}_>,j\neq k,k\neq\ell,j\neq \ell} \widebar\nu_{jk\ell}(x,t) (\Pi_{jk\ell} + \Pi_{kj\ell} + \Pi_{k\ell j}) \\
%
%
=&0.
\end{align*}
%
For $k\in\mathcal{I}$, let $\mathcal{U}_k(x,t)$ be the subspace spanned by the column vectors of $E_k(x,t)$. Since $P_k(x,t)a=a$ if $a\in \mathcal{U}_k(x,t)$ and $P_k(x,t)a=0$ if $a\in \mathcal{U}_\ell(x,t)$ for $\ell\neq k$, it is clear that $\|P_k(x,t)\|_{\text{op}}=1$. Hence $\|P_k(x,t)\|_F\leq \sqrt{d}\|P_k(x,t)\|_{\text{op}}=\sqrt{d}$. For $j,k,\ell\in \mathcal{I}$, notice that 
\begin{align}\label{pijkellf}
\|\Pi_{jk\ell}(x,t)\|_F\leq \|D\|_F^2\|P_j(x,t)\|_F\|P_k(x,t)\|_F\|P_\ell(x,t)\|_F\leq d^{3/2} \|D\|_F^2 .
\end{align}
Hence by (\ref{q2express}),
\begin{align}\label{q2bound}
\frac{1}{2}\|Q^{(2)}(H_n(x,t),D)\|_F \leq 3 d^{3/2} [q_r^2 (q-q_r) + q_r(q-q_r)]\frac{1}{[e_0-2\varepsilon_n^{(2)}]^2}\|D\|_F^2.
\end{align}
%
%
When $\max\{\varepsilon_n^{(0)},\varepsilon_n^{(1)},\varepsilon_n^{(2)}\}$ is small enough, using (\ref{rnxexpress}) and (\ref{q2bound}), we get
\begin{align}\label{rnxbound}
\sup_{x\in\mathcal{S}} \|R_n(x) \|_F \lesssim \sup_{x\in\mathcal{S}} \|\nabla^2 \wh f(x)-\nabla^2 f(x)\|_F^2 \lesssim (\varepsilon_n^{(2)})^2.
\end{align}
%
%
%
%
%
%
%
%
Similarly using (\ref{q1sum}), we can show that $\sup_{x\in\mathcal{S}}\|Q^{(1)}(\nabla^2 f(x),\widebar D_{n,2}(x))\|_F\lesssim \varepsilon_n^{(2)}$. It then follows from (\ref{LLhx}) and (\ref{rnxbound}) that when $\varepsilon_n^{(2)}$ is small enough we have
\begin{align}\label{whlxdiffbound}
\|\wh L(x) - L(x)\|_F \lesssim \varepsilon_n^{(2)}.
\end{align}
With the expansion in (\ref{LLhx}), next we study $[\wh L(x) - L(x)]\nabla f(x)$, the first term on the right-hand side of (\ref{overalldecomp}). Notice that
\begin{align}\label{qj1nablafx}
& \Big[ \sum_{j=q_r+1}^q Q_{j}^{(1)}(\nabla^2 f(x),\widebar D_{n,2}(x)) \Big] \nabla f(x) \nonumber\\
= & \sum_{j=q_r+1}^q \Big[  P_j(x)\widebar D_{n,2}(x) S_j(x) + S_j(x)\widebar D_{n,2}(x) P_j(x)\Big] \nabla f(x) \nonumber\\
%
%
\stackrel{(*)}{=}& \sum_{j=q_r+1}^q \{  P_j(x) \otimes [S_j(x) \nabla f(x)]^\top  + S_j(x) \otimes [P_j(x) \nabla f(x)]^\top  \} \textsf{vec} [\widebar D_{n,2}(x)] \nonumber\\
= & M(x)^\top  \wt D_{n,2}(x).
\end{align}
Here step (*) holds because of the following property of Kronecker products: for matrices $A,B$ and a column vector $v$ which are compatible, we have $ABv=(A\otimes v^\top )\textsf{vec} B^\top $ (see page 30 of \citealt{magnus2007matrix}).

%
Note that for $x\in\mathcal{S}$, $M(x)$ has simpler expression given in (\ref{MhTexpress}). Since $M(x)$ and $L(x)$ are uniformly bounded on $\mathcal{S}$ under assumptions (F1) and (F2), from (\ref{overalldecomp}), (\ref{whlxdiffbound}) and (\ref{qj1nablafx}) we obtain that when $\max\{\varepsilon_n^{(0)},\varepsilon_n^{(1)},\varepsilon_n^{(2)}\}$ is small enough,
\begin{align*}
\sup_{x\in\mathcal{S}} \| [\wh L(x) \nabla \wh f(x) - L(x) \nabla f(x) ] -  M(x)^\top \wt D_{n,2}(x) \| \lesssim \varepsilon_n^{(1)} + (\varepsilon_n^{(2)} )^2.
\end{align*}
Also note that $\sup_{x\in\mathcal{S}}\|\wt D_{n,2}(x) - D_{n,2}(x)\| = O(h^\beta)$ by Lemma~\ref{biasbound}. 
%
%
%
We then obtain 
\begin{align*}
|\sup_{x\in\mathcal{S}}\wh p(x) - \sup_{x\in\mathcal{S}} \|M(x)^\top D_{n,2}(x)\|| 
\leq &\sup_{x\in\mathcal{S}} |\wh p(x) - \|M(x)^\top D_{n,2}(x)\| | \\
\leq & \sup_{x\in\mathcal{S}} \| [\wh L(x) \nabla \wh f(x) - L(x) \nabla f(x) ] -  M(x)^\top D_{n,2}(x) \| \\
\lesssim & h^\beta +\varepsilon_n^{(1)} + (\varepsilon_n^{(2)} )^2.
\end{align*}
This is case (a). 
%
%
%
%
%
Next we consider the case (b). When $\nabla f(x)=0$, for all $x\in\mathcal{S}$, 
%
%
from (\ref{overalldecomp}) and (\ref{whlxdiffbound}) we have
\begin{align*}
\sup_{x\in\mathcal{S}} \| [\wh L(x) \nabla \wh f(x) - L(x) \nabla f(x) ] -  L(x)\wt D_{n,1}(x) \| \lesssim \varepsilon_n^{(1)} \varepsilon_n^{(2)} , 
%
\end{align*}
where $\wt D_{n,1}(x)=\nabla \wh f(x)- \nabla f(x)$. Since $\sup_{x\in\mathcal{S}}\|\wt D_{n,1}(x) - D_{n,1}(x)\| = O(h^\beta)$ by Lemma~\ref{biasbound}, we then have
\begin{align*}
|\sup_{x\in\mathcal{S}}\wh p(x) - \sup_{x\in\mathcal{S}} \|L(x)^\top D_{n,1}(x)\|| 
\leq &\sup_{x\in\mathcal{S}} |\wh p(x) - \|L(x)^\top D_{n,1}(x)\| | \\
\leq & \sup_{x\in\mathcal{S}} \| [\wh L(x) \nabla \wh f(x) - L(x) \nabla f(x) ] -  L(x)^\top D_{n,1}(x) \| \\
\lesssim & h^\beta + \varepsilon_n^{(1)} \varepsilon_n^{(2)}.
\end{align*}
This is case (b) and the proof is completed.
%
%
%
%
\end{proof}

{\bf Proof of Corollary~\ref{gaussianappcorollary}}
\begin{proof}

Lemmas~\ref{biasbound} and \ref{talabound} imply that for all $\ell>0$, there exist constants $C_k$ for $k=0,1,2$ such that
\begin{align}\label{originalepb}
\mathbb{P} ( \varepsilon_n^{(k)} \geq C_k (\gamma_{n,h}^{(k)}+h^\beta)) \leq n^{-\ell}.
\end{align}
We then obtain (\ref{firstapproximation}) by using (\ref{ridgenessapprox}). 
\end{proof}

{\bf Proof of Proposition~\ref{2ndapproxtheorem}}
\begin{proof}

Define the class of functions
\begin{align}\label{FGrelation}
\mathcal{G} = \{h^{d/2}g: g\in\mathcal{F} \}.
\end{align}
%
It is easy to verify that $\mathcal{G}$ is VC type under assumption (F1) and (K1). We will apply Theorem 2.1 in \cite{chernozhukov2016empirical} to the empirical process $\mathbb{G}_n$ indexed by $\mathcal{G}$. To this end, we need to verify that there exists a constant $\sigma>0$ such that $\sup_{g\in \mathcal{G}} \mathbb{E}|g(X)|^k \leq \sigma^2 b^{k-2}$ for $k=2,3,4$. For any $x\in\mathcal{S}$ and $z\in\mathbb{S}_{d-1}$ we have 
\begin{align*}
\mathbb{E}|h^{d/2}g_{x,z}(X)|^k = 
\begin{cases}
\int_{\mathbb{R}^d} |z^\top  M(x)^\top  d^2K(\frac{x-y}{h})|^k f(y)dy & \text{ for case (a)}\\
\int_{\mathbb{R}^d} |z^\top  L(x)^\top  \nabla K(\frac{x-y}{h})|^k f(y)dy & \text{ for case (b)}
\end{cases}.
\end{align*}
Using the change of variable $u=(x-y)/h$ to evaluate the above integrals, we get that $\sup_{g\in \mathcal{G}} \mathbb{E}|g(X)|^k =O(h^d )$ for $k=2,3,4$. Note that $h^{d/2}Z=\sup_{g\in \mathcal{G}} \mathbb{G}_n(g)$. Using Theorem 2.1 in \cite{chernozhukov2016empirical}, where we take $b=O(1)$, $\sigma=O(h^{d/2})$, $K_n=O(\log n)$, and have that for every $\gamma\in(0,1)$ and $q$ sufficiently large, there exists a random variable $\wt Z \stackrel{d}{=} \sup_{g\in\mathcal{F}} G_P(g)$ such that
\begin{align*}
\mathbb{P} (|h^{d/2}Z- h^{d/2} \wt Z| >C_1 \kappa_{n} ) \leq C_2 (\gamma+n^{-1}),
\end{align*}
for some constants $C_1,C_2>0$, where $\kappa_{n} =\gamma^{-1/3}(\log n)^{1/2}(\gamma_{n,h}^{(0)})^{1/3} + (\gamma/n)^{-1/q}(\log n)^{1/2}\gamma_{n,h}^{(0)}$. Combining this result with (\ref{firstapproximation}) we then have
\begin{align*}
\mathbb{P} \Big(  \Big| \omega_n \sup_{x\in\mathcal{S}}\wh p(x) - \wt Z\Big| > \wt C_1 \wt\delta_{n} \Big)  \leq \wt C_2 (\gamma+n^{-1}),
\end{align*}
for some constants $\wt C_1, \wt C_2>0$, where $\wt\delta_{n} = \delta_n + h^{-d/2}\kappa_{n}$. Choosing $\gamma=(\log n)^{9/8} (\gamma_{n,h}^{(0)})^{1/4}$ and $q$ sufficiently large, we then get (\ref{secondapproximation}).

%
%
\end{proof}

{\bf Proof of Lemma~\ref{newanticoncentration}}
\begin{proof}
We use the ideas in the proofs of Lemma 2.2 in \cite{chernozhukov2016empirical} and Theorem 3.2 in \cite{belloni2018high}. For any $\delta>0$, let $\{t_1,\cdots,t_N\}$ be a $\delta$-net of $(T,d)$ with $N=N(T,d,\delta)$. Let $\zeta=\sup_{(s,t)\in T_\delta}|W(t)-W(s)|$. Since $|\sup_{t\in T} W(t) - \max_{1\leq j\leq N} W(t_j)|\leq \zeta$, we have that for every $x\in\mathbb{R}$ and $\epsilon,\kappa^\prime>0$,
\begin{align}\label{concentrationbound}
\mathbb{P} \Big(\Big|\sup_{t\in T} W(t) - x\Big|\leq\epsilon\Big) \leq \mathbb{P} \Big(\Big|\max_{1\leq j\leq N} W(t_j) -x\Big|\leq\epsilon+\kappa^\prime\Big) + \mathbb{P}(\zeta>\kappa^\prime).
\end{align}
We take $\epsilon<\bar\sigma_0/16$ and $\kappa^\prime=\mathbb{E}(\zeta)+\kappa\delta\leq \bar\sigma_0/16$, so that $\epsilon+\kappa^\prime<\bar\sigma_0/8$. By Theorem 3.2 in \cite{belloni2018high}, we have
\begin{align}\label{concentrationbound1}
\mathbb{P} \Big(\Big|\max_{1\leq j\leq N} W(t_j) -x\Big|\leq\epsilon+\kappa^\prime\Big) \leq 2(1/\bar\sigma) (\epsilon + \phi(\delta) + \kappa\delta) \sqrt{\log N} (\sqrt{2\log N}+3).
\end{align}
Also by Borell's inequality (Proposition A.2.1 in \citealt{vaart1996weak}), for $\kappa>0$, we get
\begin{align}\label{concentrationbound2}
\mathbb{P}(\zeta>\kappa^\prime) \leq e^{-\kappa^2/2}.
\end{align}
Plugging (\ref{concentrationbound1}) and (\ref{concentrationbound2}) into (\ref{concentrationbound}), we then obtain the desired result in this lemma. 
\end{proof}

 {\bf Proof of Lemma~\ref{sigmah2bounds}}
 \begin{proof}
 
 We first consider case (a). For $g\in \mathcal{F}$, we denote $g(y)=h^{-d/2}z^\top  M(x)^\top  d^2 K(\frac{x-y}{h})$ for some $x\in\mathcal{S}$ and $z\in\mathbb{S}_{d-1}$. By the definition of $G_P(g)$, we have 
\begin{align*}
\text{Var}(G_P(g)) = \text{Var}\Big(h^{-d/2}z^\top  M(x)^\top  d^2K\Big(\frac{x-X}{h}\Big)\Big).
\end{align*}
Notice that using the expression of $M$ in (\ref{MhTexpress}) we have
\begin{align}\label{ztmxt}
z^\top M(x)^\top  =  \mathop{\sum\sum}\limits_{q\geq j\geq q_r+1>k\geq 1} \nu_{jk}(x) \{z^\top P_j(x)  \otimes [P_k(x) \nabla f(x)]^\top \} \mathbf{D} ,
\end{align}
where we have used the following property of Kronecker products of matrices: $(A\otimes B)(C\otimes D)=(A\otimes C)(B\otimes D)$ for any compatible matrices $A,B,C$ and $D$, and we have treated $z^\top $ as $z^\top \otimes 1.$ Recall that $\mathcal{V}^\perp(x)$ is the subspace spanned by $v_{1}(x),\cdots,v_{r}(x)$. When $z\in \mathbb{S}_{d-1}\cap \mathcal{V}^\perp(x)$, we have $z^\top M(x)^\top  =0$ and hence $\text{Var}(G_P(g))=0$, which implies $\ubar\sigma^2=0$.  

Next we show (\ref{barsignabound}). 
%
%
%
We compute $\text{Var}(G_P(g))$ below.
\begin{align}\label{variancecasea}
& \text{Var}(G_P(g)) \nonumber\\
= & \text{Var}\Big(h^{-d/2}z^\top  M(x)^\top  d^2K\Big(\frac{x-X}{h}\Big)\Big) \nonumber\\
= & h^{-d} \int_{\mathbb{R}^d} \Big[z^\top M(x)^\top  d^2K\Big(\frac{x-y}{h}\Big)\Big]^2  f(y) dy - h^{-d} \Big[\int_{\mathbb{R}^d} z^\top M(x)^\top  d^2K\Big(\frac{x-y}{h}\Big) f(y) dy \Big]^2 \nonumber\\
=& z^\top M(x)^\top  \int_{\mathbb{R}^d} d^2K(u) d^2K(u)^\top f(x-hu) du M(x) z \nonumber\\
& \hspace{2cm} - h^d \Big[ z^\top M(x)^\top  \int_{\mathbb{R}^d}d^2K(u) f(x-hu) du \Big]^2 \nonumber\\
=& f(x) z^\top M(x)^\top  \mathbf{R} M(x) z + O(h^2) ,
\end{align}
where $\mathbf{R} = \int_{\mathbb{R}^d} d^2K(u) [d^2K(u)]^\top  du$, and the $O$-term is uniform in $g\in\mathcal{S}$. In the above calculation we have used the change of variable $u=(x-y)/h$ and a Taylor expansion for $f(x-hu)$.
%
Note that $\mathbf{R}$ is a positive definite matrix under assumptions (K1) and (K2) because for any $a\in\mathbb{R}^{d(d+1)/2}$, $a^\top \mathbf{R}a\geq \int_{\mathcal{B}}(a^\top d^2 K(x))dx>0$, where $\mathcal{B}$ is given in assumption (K2). Recall that $\mathcal{V}(x)$ is the subspace spanned by $v_{r+1}(x),\cdots,v_{d}(x)$. 
%
Note that $\mathbb{S}_{d-1}\cap \mathcal{V}(x) = \{V(x)a:\; a\in \mathbb{S}_{d-r-1}\}$. In other words, for any $z\in\mathbb{S}_{d-1}\cap \mathcal{V}(x)$, there always exists $a\in \mathbb{S}_{d-r-1}$ such that $z=V(x)a$. For $x\in\mathcal{S}$, denote $B(x)=\mathop{\sum\sum}\limits_{q\geq j\geq q_r+1>k\geq 1}  \nu_{jk}(x)\{P_j(x)^\top V(x)  \otimes [P_k(x) \nabla f(x)]\}$. Using (\ref{ztmxt}), we have 
\begin{align*}
z^\top  M(x)^\top  =  \mathop{\sum\sum}\limits_{q\geq j\geq q_r+1>k\geq 1} \nu_{jk}(x)\{a^\top V(x)^\top P_j(x)  \otimes [P_k(x) \nabla f(x)]^\top \} \mathbf{D} 
=  a^\top  B(x)^\top  \mathbf{D}.
\end{align*}
Note that we can write $V(x)=(E_1(x),\cdots,E_q(x))$ and $$P_j(x)^\top V(x)=E_j(x)E_j(x)^\top V(x)=(0,\cdots,0,E_j(x),0,\cdots,0),$$
where $E_j(x)$ occupies the same position as in $V(x)$. For $i=1,\cdots,d$, let $\theta_i$ be such that $\mu_{\theta_i}(x)=\lambda_i(x)$. Therefore we can write $B(x)=(b_{r+1}(x),\cdots,b_{d}(x))$ with $b_{i}(x)=v_{i}(x)\otimes \sum_{j=1}^r [\nu_{\theta_i\theta_j}(x)v_{j}(x)^\top  \nabla f(x) v_{j}(x)]$, $i=r+1,\cdots,d.$ 
Recall that $x_0\in\mathcal{S}$ and $\|\nabla f(x_0)\|>0$ as given in assumption (F4). Then we obtain
%
\begin{align}\label{quadraticbound}
\sup_{z\in\mathbb{S}_{d-1}, x\in\mathcal{S}} z^\top M(x)^\top  \mathbf{R} M(x) z 
\geq & \sup_{z\in\mathbb{S}_{d-1}\cap \mathcal{V}(x_0)} z^\top  M(x_0)^\top  \mathbf{R}M(x_0) z \nonumber\\
= &\sup_{a\in \mathbb{S}_{d-r-1}} a^\top  B(x_0)^\top  \mathbf{D} \mathbf{R} \mathbf{D}^\top  B(x_0) a \nonumber\\
\geq &\lambda_{\min}(\mathbf{R})\|\mathbf{D}^\top  B(x_0)\|_{\text{op}}^2.
\end{align}
For $x\in\mathcal{S}$, let $\wt b_{i}(x)= \sum_{j=1}^r [\nu_{ij}(x)v_{j}(x)^\top  \nabla f(x) v_{j}(x)] \otimes v_{i}(x)$, $i=r+1,\cdots,d,$ and $\wt B(x)=(\wt b_{r+1}(x),\cdots,\wt b_{d}(x))$. Then $\wt b_i(x)=K_{d^2} b_i(x)$, where $K_{d^2}$ is the $d^2\times d^2$ commutation matrix such that $K_{d^2}(s\otimes t)=t\otimes s$ for any $s,t\in\mathbb{R}^d$. It is known by \cite{magnus2007matrix} (Theorem 12, page 57) that $(\mathbf{D}^+)^\top \mathbf{D}^\top =\frac{1}{2}(\mathbf{I}_{d^2} + K_{d^2})$. Hence
\begin{align}\label{l2normbound}
\|B(x_0) + \wt B(x_0)\|_{\text{op}}^2 = \|2(\mathbf{D}^+)^\top \mathbf{D}^\top B(x_0)\|_{\text{op}}^2 \leq 4 \|(\mathbf{D}^+)^\top \|_{\text{op}}^2 \|\mathbf{D}^\top B(x_0)\|_{\text{op}}^2.
\end{align}
Notice that $\|(\mathbf{D}^+)^\top \|_{\text{op}}>0$ because $(\mathbf{D}^+)^\top $ has full rank. Also notice that $$\frac{1}{4}\|B(x_0) + \wt B(x_0)\|_{\text{op}}^2=\max_{i\in\{r+1,\cdots,d\}}\sum_{j=1}^r [\nu_{\theta_i\theta_j}(x_0)v_{j}(x_0)^\top  \nabla f(x_0)]^2\geq [\nu_{1q}(x_0)]^2\| \nabla f(x_{0})\|^2.$$  
Combining (\ref{quadraticbound}) and (\ref{l2normbound}) we get
\begin{align}
\sup_{z\in\mathbb{S}_{d-1}, x\in\mathcal{S}} z^\top M(x)^\top  \mathbf{R} M(x) z 
\geq & \frac{\lambda_{\min}(\mathbf{R})}{4 \|(\mathbf{D}^+)^\top \|_{\text{op}}^2} \|B(x_0) + \wt B(x_0)\|_{\text{op}}^2\nonumber\\
%
\geq & \frac{\lambda_{\min}(\mathbf{R})}{\|(\mathbf{D}^+)^\top \|_{\text{op}}^2}[\nu_{1q}(x_0)]^2\| \nabla f(x_{0})\|^2 
=:b_1>0. \label{varupperb}
\end{align}
Hence by (\ref{variancecasea}), $\bar\sigma^2 \geq \frac{1}{2}b_1$ when $h$ is small enough. Also notice that using (\ref{MhTexpress})
\begin{align*}
&\sup_{z\in\mathbb{S}_{d-1}, x\in\mathcal{S}} z^\top M(x)^\top  \mathbf{R} M(x) z \\
\leq &  \lambda_{\max}(\mathbf{R}) \sup_{ x\in\mathcal{S}} \|M(x)\|_{\text{op}}^2 \\
\leq & \lambda_{\max}(\mathbf{R}) \|\mathbf{D}\|_{\text{op}}^2\sup_{ x\in\mathcal{S}} \Big\|\mathop{\sum\sum}\limits_{q\geq j\geq q_r+1>k\geq 1} \nu_{jk}(x) \{  P_j(x) \otimes [P_k(x) \nabla f(x)] \}\Big\|_{\text{op}}^2\\
\leq & \frac{1}{e_0^2}  \lambda_{\max}(\mathbf{R}) \|\mathbf{D}\|_{\text{op}}^2\sup_{ x\in\mathcal{S}} \Big\|\mathop{\sum\sum}\limits_{q\geq j\geq q_r+1>k\geq 1} \{  P_j(x) \otimes [P_k(x) \nabla f(x)] \}\Big\|_{\text{op}}^2\\
= &  \frac{\sup_{ x\in\mathcal{S}}\|\nabla f(x)\|}{e_0^2}  \lambda_{\max}(\mathbf{R}) \|\mathbf{D}\|_{\text{op}}^2 =:b_2<\infty.
\end{align*}
Then $\bar\sigma^2 <\frac{3}{2}b_2<\infty$ when $h$ is small enough.

Next we consider case (b). For $g\in \mathcal{F}$, we denote $g(y)=h^{-d/2}z^\top  L(x)^\top  d^2 K(\frac{x-y}{h})$ for some $x\in\mathcal{S}$ and $z\in\mathbb{S}_{d-1}$. By the definition of $G_P(g)$, we have 
\begin{align*}
\text{Var}(G_P(g)) = \text{Var}\Big(h^{-d/2}z^\top  L(x)^\top  d^2K\Big(\frac{x-X}{h}\Big)\Big).
\end{align*}
When $z\in \mathbb{S}_{d-1}\cap \mathcal{V}^\perp(x)$, we have $z^\top L(x)^\top  =0$ and hence $\text{Var}(G_P(g))=0$, which implies $\ubar \sigma^2=0$. 
Next we show (\ref{barsignabound}). Similar to (\ref{variancecasea}) we have 
\begin{align}\label{wpgapprox}
 \text{Var}(G_P(g)) = z^\top L(x)^\top  \mathbf{S} L(x) z + O(h^2),
\end{align}
where $\mathbf{S} = \int_{\mathbb{R}^d} \nabla K(x) [\nabla K(x)]^\top dx$, and the $O$-term is uniform in $g\in\mathcal{S}$. Note that $\mathbf{S}$ is a positive definite matrix under assumptions (K1) and (K2), for a similar reason given above for $\mathbf{R}$. Then 
\begin{align}\label{case2lowerbound}
 \sup_{z\in\mathbb{S}_{d-1}\cap \mathcal{V}(x), x\in\mathcal{S}} z^\top  L(x)^\top  \mathbf{S}L(x) z 
= \sup_{a\in \mathbb{S}_{d-r-1}, x\in\mathcal{S}} a^\top  V(x)^\top  \mathbf{S} V(x) a 
\geq  \lambda_{\min}(\mathbf{S})>0.
\end{align}
Then it can be seen that $\bar \sigma^2 \geq \frac{1}{2}\lambda_{\min}(\mathbf{S})$ 
when $h$ is small enough, using (\ref{wpgapprox}). Also notice that 
\begin{align*}
\sup_{z\in\mathbb{S}_{d-1}, x\in\mathcal{S}} z^\top  L(x)^\top  \mathbf{S}L(x) z\leq \lambda_{\max}(\mathbf{S})\|L(x)\|_{\text{op}}^2 = \lambda_{\max}(\mathbf{S}).
\end{align*}
Hence $\bar \sigma^2 \leq \frac{3}{2}\lambda_{\max}(\mathbf{S})$ when $h$ is small enough. 
\end{proof}

{\bf Proof of Theorem~\ref{gaussianappanti}}
\begin{proof}
We first show (\ref{thirdapproximation}) for case (a). By applying Proposition~\ref{2ndapproxtheorem} and Lemma~\ref{Kolmogrovgeneral}, we have that for some constant $C_1>0$,
\begin{align}\label{firstconclusion}
 \sup_{t\geq \bar\sigma_{0}} \Big| \mathbb{P}( \sqrt{nh^{d+4}} \sup_{x\in\mathcal{S}}\wh p(x) \leq t) - \mathbb{P}(\wt Z \leq t) \Big| 
\lesssim \sup_{t\geq \bar\sigma_{0}} \mathbb{P} (|\wt Z-t|\leq C_1\delta_{n,a} ) + (\log n)^{9/8} (\gamma_{n,h}^{(0)})^{1/4}.
\end{align}
We need to show that the first term on the right-hand side is a small quantity. Let $N(\delta) \equiv  N(\mathcal{F},e_P,\delta)$, where $e_P$ is a semi-metric such that for $g_1,g_2\in \mathcal{F}$, 
\begin{align}\label{canonicalmetric}
e_P(g_1,g_2)=\sqrt{\mathbb{E}[(G_P(g_1)-G_P(g_2))^2]}.
\end{align} 
Also let $\phi(\delta):=\mathbb{E}[\sup_{e_P(g_1,g_2) \leq \delta}|G_P(g_1)-G_P(g_2)|]$. From Lemma~\ref{sigmah2bounds} we know that $\bar\sigma_{0}>C_2$ for some constant $C_2>0$. Using Lemma~\ref{newanticoncentration}, when $n$ is large enough, we have $C_1\delta_{n,a} < \frac{1}{16}\bar\sigma_{0}$, and 
\begin{align}\label{lemma3.7result}
 \sup_{t\geq \bar\sigma_{0}} \mathbb{P}(|\wt Z - t|\leq C_1\delta_{n,a}) \leq \inf_{\substack{\delta,\eta>0\\\phi(\delta) + \eta\delta\leq \bar\sigma_0/16 }} \hbar (\delta,\eta) ,
%
\end{align}
where 
\begin{align*}
\hbar (\delta,\eta) = 2(1/\bar\sigma) (C_1\delta_{n,a}  + \phi(\delta) + \eta\delta) \sqrt{\log N(\delta)} (\sqrt{2\log N(\delta)}+3) + e^{-\eta^2/2}.
\end{align*}
We will bound the right-hand side of (\ref{lemma3.7result}). Using Dudley's maximal inequality (Corollary 2.2.8, \citealt{vaart1996weak}) we have
\begin{align}\label{phideltabound}
\phi(\delta) & \lesssim \int_0^{2\delta} \sqrt{\log N(\varepsilon)}d\varepsilon.
\end{align}
Using the relation between $\mathcal{F}$ and $\mathcal{G}$ in (\ref{FGrelation}), we have that there exist $A_0,v_0>0$ such that
\begin{align}\label{cvbound}
N(\varepsilon) \leq \sup_Q N(\mathcal{F},e_Q,\varepsilon) \leq  \sup_Q N(\mathcal{G},e_Q,h^{d/2}\varepsilon) \leq \Big( \frac{A_0}{h^{d/2} \varepsilon}\Big)^{v_0}.
\end{align}
Also we take $A_0/(2h^{d/2}\delta)\geq e$. Hence using integration by parts
\begin{align}\label{entropyintegral}
\int_0^{2\delta} \sqrt{\log N(\varepsilon)}d\varepsilon 
%
%
\leq & \int_0^{2\delta} \sqrt{ v_0\log (A_0/(h^{d/2}\varepsilon))}d\varepsilon \nonumber\\
=& \frac{\sqrt{v_0}A_0}{h^{d/2}} \Big[-(e^{-y^2}y)\Big|_{\sqrt{\log(A_0/(2h^{d/2}\delta))}}^{\infty}  + \int_{\sqrt{\log(A_0/(2h^{d/2}\delta))}}^{\infty} e^{-y^2}dy\Big] \nonumber\\
%
\lesssim & \; \delta \sqrt{\log (A_0/(2h^{d/2}\delta))}.
\end{align}
Using (\ref{phideltabound}) and (\ref{entropyintegral}) we then get
\begin{align}\label{hbarbound}
\hbar (\delta,\eta) \lesssim \log (1/(h^{d/2}\delta)) (\delta_{n,a}  + \delta \sqrt{\log (1/(h^{d/2}\delta))} +\eta\delta)  + e^{-\eta^2/2}.
\end{align}
We choose  $\delta=1/n$ and $\eta=\sqrt{\log n}$, which satisfies $\phi(\delta) + \eta\delta\leq \bar\sigma_0/16$ when $n$ is large enough. Note that by assumption (H), we have $\log n\simeq \log(n/h^{d/2})$. Using (\ref{lemma3.7result}) and (\ref{hbarbound}), we have
\begin{align}\label{anticoncenbound}
 \sup_{t\geq \bar\sigma_{0}} \mathbb{P}(|\wt Z - t|\leq C_1\delta_{n,a} ) \lesssim (\log n) (\delta_{n,a} ).
\end{align}
%
%
Plugging this result into (\ref{firstconclusion}), we then get (\ref{thirdapproximation}) for case (a). The proof of (\ref{thirdapproximation}) for case (b) is similar and hence omitted. 
\end{proof}

{\bf Proof of Lemma~\ref{wtzhquantile}}
\begin{proof}
Only the proof for case (a) is given because for case (b) the proof is similar. 
To show (\ref{gaussianexp1}), we need to prove that for some positive constants $C_1,C_2,$ and $C_3$,
\begin{align}
&\mathbb{P}(\wt Z \leq C_1\sqrt{|\log h|} ) \lesssim  h^{C_3}, \label{ztildelower}\\
&\mathbb{P}(\wt Z \geq C_2\sqrt{|\log h|} ) \lesssim  h^{C_3}.\label{ztildeupper}
\end{align}
We first show (\ref{ztildelower}). Recall that $\bar\sigma=\sup_{g\in\mathcal{F}}\text{Var}(G_p(g))$. We have shown that there exist constants $C_u,C_\ell>0$ such that $0<C_u<\bar\sigma<C_\ell<\infty$ in Lemma~\ref{sigmah2bounds}. Using the Borell's inequality (Proposition A.2.1 in \citealt{vaart1996weak}), we have
\begin{align}\label{Borell}
\max\Big\{\mathbb{P}\Big( \wt Z \leq  \frac{1}{2}\mathbb{E} \wt Z\Big), \mathbb{P}\Big( \wt Z \geq  \frac{3}{2}\mathbb{E} \wt Z\Big) \Big\}\leq \mathbb{P}\Big( |\wt Z -  \mathbb{E} \wt Z| \geq   \frac{1}{2}\mathbb{E} \wt Z \Big) \leq 2 \exp\Big(-\frac{[\mathbb{E} \wt Z]^2}{8\bar \sigma^2}\Big).
\end{align}
Using the Sudakov's minoration inequality (Theorem 7.4.1, \citealt{vershynin2018high}), we get
\begin{align}\label{minoration}
\mathbb{E} \wt Z \gtrsim \sup_{\varepsilon\geq 0} \varepsilon \sqrt{\log N(\mathcal{F},e_P,\varepsilon)},
\end{align} 
where $e_P$ is defined in (\ref{canonicalmetric}). We will find an upper bound for $N(\mathcal{F},e_P,\varepsilon)$. For $g_1,g_2\in\mathcal{F}$, we need to compute $e_P(g_1,g_2)$. Under assumption (F4), there exist $x_0\in\mathcal{S}$, $c_0>0$, and $r_0>0$ such that $\|\nabla f(x)\|>0$ and $f(x)\ge c_0$ for all $x\in \mathcal{S}_{x_0,r_0}.$ 

Under assumption (K2), there exists $b_0>0$ such that $\mathbf{R}_0 : = \int_{\mathcal{B}(\mathbf{0},b_0)} d^2K(u) [d^2K(u)]^\top du$ is a positive definite matrix, where $\mathbf{0}$ is the origin of $\mathbb{R}^d$.  In what follows we consider $h$ small enough such that $b_0h\leq r_0$. 
Recall that $\mathcal{V}(x)$ is the subspace spanned by $v_{r+1}(x),\cdots,v_{d}(x)$, for $x\in\mathcal{S}$. Let
\begin{align*}
\mathcal{F}_{x_0,r_0} = \{g_{x,z}(\cdot): x\in\mathcal{S}_{x_0,r_0},z\in\mathbb{S}_{d-1}\cap \mathcal{V}(x)\} \subset \mathcal{F}. 
\end{align*}
Next we consider any $g_{x_1,z_1}, g_{x_2,z_2}\in\mathcal{F}_{x_0,r_0}$ and will compute $e_P(g_{x_1,z_1}, g_{x_2,z_2})$. We consider $x_1$ and $x_2$ to be separated such that $\|x_1-x_2\|\geq h^\kappa$ for some $0<\kappa<1$. Then using change of variable $u=(x_1-y)/h$ below we have
%
\begin{align}\label{canonicmetricdiff}
& e_P(g_{x_1,z_1}, g_{x_2,z_2}) \nonumber\\
%
= &\frac{1}{h^d} \int_{\mathbb{R}^d} \Big( z_1^\top  M(x_1)^\top  d^2K\Big(\frac{x_1-y}{h}\Big) - z_2^\top  M(x_2)^\top  d^2K\Big(\frac{x_2-y}{h}\Big) \Big)^2 f(y)dy \nonumber\\
=& \int_{\mathbb{R}^d} \Big( z_1^\top  M(x_1)^\top  d^2K(u) - z_2^\top  M(x_2)^\top  d^2K\Big(\frac{x_2-x_1}{h} + u\Big) \Big)^2 f(x_1+hu)du \nonumber\\
\geq & \int_{\mathcal{B}(\mathbf{0},b_0)} \Big( z_1^\top  M(x_1)^\top  d^2K(u) - z_2^\top  M(x_2)^\top  d^2K\Big(\frac{x_2-x_1}{h} + u\Big) \Big)^2 f(x_1+hu)du \nonumber\\
\geq & \text{I} - \text{II},
\end{align}
where 
\begin{align*}
&\text{I} = \frac{1}{2} \int_{\mathcal{B}(\mathbf{0},b_0)}  ( z_1^\top  M(x_1)^\top  d^2K(u))^2 f(x_1+hu)du,\\
&\text{II} = \int_{\mathcal{B}(\mathbf{0},b_0)} \Big( z_2^\top  M(x_2)^\top  d^2K\Big(\frac{x_2-x_1}{h} + u\Big) \Big)^2 f(x_1+hu)du.
\end{align*}
%
%
%
%
For $\text{I}$, we have
\begin{align*}
\text{I} \geq \frac{1}{2} c_0 \int_{\mathcal{B}(\mathbf{0},b_0)}  ( z_1^\top  M(x_1)^\top  d^2K(u))^2 du = \frac{1}{2} c_0 z_1^\top  M(x_1)^\top  \mathbf{R}_0 M(x_1) z_1.  
\end{align*}
Since $x_1\in\mathcal{S}_{x_0,r_0}$ and $z_1\in \mathbb{S}_{d-1}\cap \mathcal{V}(x_1)$, similar to (\ref{varupperb}) we have
\begin{align}\label{Ibound}
\text{I} & \geq \frac{1}{2}c_0\frac{\lambda_{\min}(\mathbf{R}_0)}{\|(\mathbf{D}^+)^\top \|_{\text{op}}^2}[\nu_{1q}(x_1)]^2\| \nabla f(x_{1})\|^2>0.
%
%
\end{align}
%
For $\text{II}$, notice that for all $u\in\mathcal{B}(\mathbf{0},b_0)$, $\|(x_2-x_1)/h+u\| \geq h^{\kappa-1}-b_0\rightarrow\infty$, as $h\rightarrow0$. Hence under the assumption (K1), $d^2K((x_2-x_1)/h+u)\rightarrow 0$, and $\text{II}\rightarrow 0$ as $h\rightarrow0$. Then as a result of (\ref{canonicmetricdiff}) and (\ref{Ibound}), there exists a constant $C_0>0$ such that $e_P(g_{x_1,z_1}, g_{x_2,z_2}) \geq C_0$ when $h$ is small enough, for all $g_{x_1,z_1}, g_{x_2,z_2}\in\mathcal{F}_{x_0,r_0}$ such that $\|x_1-x_2\|\geq h^\kappa$ with $0<\kappa<1$. 

Let $\mathcal{J}=\{x_1,\cdots,x_{N_0}\}$ be an $h^\kappa$-packing of $\mathcal{S}_{x_0,r_0}$ based on the Euclidean distance, where $N_0$ is the cardinality, i.e., $\|x_i-x_j\|>h^\kappa$ for all $1\leq i\neq j\leq N_0$. We associate each $x_i$ with an arbitrarily chosen $z_i\in \mathbb{S}_{d-1}\cap \mathcal{V}(x_i)$. Then the above calculation implies that $\{g_{x_i,z_i}: i=1,\cdots,N_0\}$ forms a $C_0$-packing of $\mathcal{F}_{x_0,r_0}$ using $e_P$. 
%
Assumption (F4) implies that we can choose $\mathcal{J}$ such that $N_0 \gtrsim h^{-\kappa \phi_0}$. Using the relation between packing and covering numbers (see Lemma 5.2 in \citealt{niyogi2008finding}), we have
\begin{align*}
N(\mathcal{F},e_P,C_0/2) \geq N(\mathcal{F}^\star,e_P,C_0/2) \geq N_0 \gtrsim h^{-\kappa \phi_0}.
\end{align*}
Hence it follows from (\ref{minoration}) that
\begin{align}\label{eztileupper}
\mathbb{E} \wt Z \gtrsim \sqrt{|\log h|}.
\end{align}
Using (\ref{Borell}) we then get (\ref{ztildelower}). Next we show (\ref{ztildeupper}). By (\ref{Borell}) and (\ref{eztileupper}), it suffices to prove that $\mathbb{E} \wt Z \lesssim \sqrt{\log n}$. Since $\bar\sigma$ is an upper bound of the radius of $\mathcal{F}$ using $e_P$, using (\ref{cvbound}) and Dudley's maximal inequality (Corollary 2.2.8, \citealt{vaart1996weak}) we have for any $g_0\in \mathcal{F}$ and some positive constant $C$,
\begin{align*}
\mathbb{E} \wt Z \leq \mathbb{E} |G_P(g_0)| + C \int_0^{\bar\sigma} \sqrt{\log N(\mathcal{F},e_P,\varepsilon)} d\varepsilon \lesssim \sqrt{|\log h|},
\end{align*}
where the calculation is similar to (\ref{entropyintegral}) and we have used $\mathbb{E} |G_P(g_0)|=O(1)$. The proof is then completed. 
%
\end{proof}

{\bf Proof of Corollary~\ref{theoryci}}
\begin{proof}
We only give the proof for case (a), because it is similar for case (b). Recall that $t_{1-\alpha}$ is the $(1-\alpha)$ quantile of the distribution of $\omega_n^{-1} \wt Z$. Lemma~\ref{wtzhquantile} implies that for any fixed $\alpha\in(0,1)$, $t_{1-\alpha}\geq \omega_n^{-1} C_1\sqrt{\log n} \geq \omega_n^{-1} \bar\sigma_0$ for $n$ large enough, where $C_1>0$ is given in Lemma~\ref{wtzhquantile}. Hence by Theorem~\ref{gaussianappanti}, we get 
\begin{align}\label{ridgenessci}
 \Big| \mathbb{P}\Big( \omega_n \sup_{x\in\mathcal{S}}\wh p(x) \leq t_{1-\alpha}\Big) - (1-\alpha) \Big| = O( (\log n)\delta_{n}^+ ).
\end{align}
By Lemmas~\ref{biasbound} and \ref{talabound}, the result in \eqref{lambdar1diff} implies that for some constant $C>0$
\[\mathbb{P}\Big(\sup_{x\in\mathcal{S}} |\wh\lambda_{r+1}(x)-\lambda_{r+1}(x)| \leq C(\gamma_{n,h}^{(2)} +h^\beta)\Big)\geq 1- n^{-1}.\]
By assumption (F2), we get 
\begin{align}\label{eigenvaluesneg}
\mathbb{P}\Big(\sup_{x\in\mathcal{S}} \wh\lambda_{r+1}(x) < 0\Big)\geq 1-n^{-1},
\end{align}
when $n$ is large enough. Hence by using (\ref{ridgenessci}) and (\ref{eigenvaluesneg}) we get
\begin{align*}
\mathbb{P}(\mathcal{S}\subset \wh{\mathcal{S}}(t_{1-\alpha}) ) = & \mathbb{P}\Big( \omega_n \sup_{x\in\mathcal{S}} \wh p(x) \leq  t_{1-\alpha} \;\;\text{and}\;\; \sup_{x\in\mathcal{S}} \wh\lambda_{r+1}(x) < 0\Big) \\
= & 1-\alpha + O( (\log n)\delta_{n}^+ ).
\end{align*}
\end{proof}

\subsection{Proofs for Section \ref{bootstrapsec}}

Notice that 
\begin{align*}
\partial^{\alpha}\wh f^e (x) = \partial^{\alpha}\wh f(x) + \frac{1}{n} \sum_{i=1}^n e_i \Big( \frac{1}{h^{d+|\alpha|}} \partial^{\alpha} K\Big(\frac{x-X_i}{h}\Big) - \partial^{\alpha} \wh f(x)\Big),
\end{align*}
and $\mathbb{E} [\partial^{\alpha}\wh f^e (x) \;|\;\X_n] = \partial^{\alpha} \wh f(x).$ Let
\begin{align*}
&\varepsilon_{n,e}^{(0)} = \sup_{x\in\mathcal{H}} |\wh f^e(x) - \wh f(x)|,\\
&\varepsilon_{n,e}^{(1)} = \sup_{x\in\mathcal{H}}\|\nabla \wh f^e(x)- \nabla \wh f(x)\|, \\
&\varepsilon_{n,e}^{(2)} = \sup_{x\in\mathcal{H}} \|\nabla^2 \wh f^e(x)-\nabla^2 \wh f(x)\|_F^2.
\end{align*}
%
%
%
%
For any measurable function $g:\mathbb{R}^d\mapsto \mathbb{R}$, let 
\begin{align}\label{gne}
\mathbb{G}_n^e(g) = \frac{1}{\sqrt{n}} \sum_{i=1}^n e_i [g(X_i) - \mathbb{P}_n (g)].
\end{align}
 %
%
We will use the following two lemmas in the proof. Recall that $\sigma(\X_n)$ is the sigma-algebra generated by $\X_n$.

\begin{lemma}\label{multiplierboottalaglemma}
Assume that (F1), (K1), and (K2) hold and $\gamma_{n,h}^{(2)}\rightarrow0$. For any $\ell_1,\ell_2>0$ and $\alpha\in\mathbb{N}^d$ with $|\alpha|=0,1,2$, there exist a constant $C_\alpha>0$ and an event $E_1\in \sigma(\X_n)$ with probability $1-n^{-\ell_1}$ such that on the event $E_1$ for $n$ large enough we have
%
\begin{align}\label{multiplierboottalag}
\mathbb{P} \Big(\sup_{x\in\mathcal{H}} |\partial^{\alpha} \wh f^e (x) -\partial^{\alpha} \wh f(x)| > C_\alpha \gamma_{n,h}^{(|\alpha|)} \;|\;\X_n \Big) \leq n^{-\ell_2}.
\end{align}

\end{lemma}
\begin{proof}
Recall $\mathcal{K}_\alpha$ defined in (\ref{kalpha}) and $\mathbb{G}_n^e$ given in (\ref{gne}). Note that conditional on $\X_n$, $\{\mathbb{G}_n^e(g):g\in\mathcal{K}_{\alpha}\}$ is a centered Gaussian process indexed by $\mathcal{K}_{\alpha}$.
%
We can write
\begin{align}\label{multiplierbootscaled}
\sup_{x\in\mathcal{H}} |\partial^{\alpha}\wh f^e (x) - \partial^{\alpha}\wh f(x)| = \frac{1}{\sqrt{n}h^{d+|\alpha|}} \sup_{g\in \mathcal{K}_{\alpha}} |\mathbb{G}_n^e (g)|.
\end{align}
%
%
Consider the classes of functions $\mathcal{K}_{\alpha}^2 = \{g^2:g\in \mathcal{K}_{\alpha}\}.$ We have
%
\begin{align}
&\sup_{g\in \mathcal{K}_\alpha^2} \mathbb{E}[g(X)] = \sup_{x\in\mathcal{H}} \int_{\mathbb{R}^d} \Big[\partial^\alpha K \Big(\frac{x-y}{h}\Big)\Big]^2 f(y)dy \leq h^d \|f\|_\infty \int_{\mathbb{R}^d} [\partial^\alpha K(u)]^2 du =O(h^d),\label{expboundg}\\
&\sup_{g\in \mathcal{K}_\alpha^2} \text{Var}[g(X)] \leq \sup_{x\in\mathcal{H}} \int_{\mathbb{R}^d} \Big[\partial^\alpha K \Big(\frac{x-y}{h}\Big)\Big]^4 f(y)dy \leq h^d \|f\|_\infty \int_{\mathbb{R}^d} [\partial^\alpha K(u)]^4 du =O(h^d),\label{varboundg}
\end{align}
where we have used change of variable $u=(x-y)/h$ and assumption (K1).
%
%
Also assumption (K2) implies that $\sup_{y\in\mathbb{R}^d} \sup_{g\in\mathcal{K}_\alpha^2}|g(y)|\leq b^2$ and it follows from Corollary A.1 in \cite{chernozhukov2014gaussian} that $\sup_Q N(\mathcal{K}_\alpha^2,e_Q,2b^2\varepsilon)\leq (\frac{A}{\varepsilon})^v$, for $\varepsilon\in(0,1]$. Using Talagrand's inequality given in Lemma~\ref{talagrandineq}, for any $\ell_1>0$, we have with probability at least $1 - n^{-\ell_1}$,
\begin{align}\label{condevent1}
\sup_{|\alpha|\leq 3}\sup_{g\in \mathcal{K}_{\alpha}^2} |\mathbb{G}_n(g)| \lesssim h^{d/2} \sqrt{\log n}.
\end{align}
%
Denote the above event by $E_1$. The rest of the proof is conditional on the event $E_1$. Using (\ref{expboundg}) and (\ref{condevent1}), we have
\begin{align}\label{signan2cond}
\sigma_n^2  := \sup_{g\in \mathcal{K}_{\alpha} } \mathbb{P}_n (g^2) & \leq  \sup_{g\in \mathcal{K}_{\alpha}^2 } \mathbb{E} [g(X)] + n^{-1/2} \sup_{g\in \mathcal{K}_{\alpha}^2} |\mathbb{G}_n(g)| 
 \lesssim h^d + n^{-1/2} h^{d/2} \sqrt{\log n} 
 \lesssim h^d.
\end{align}
Note that $\sup_{g\in \mathcal{K}_{\alpha}} \text{Var}(\mathbb{G}_n^e (g) \;|\;\X_n) \leq \sigma_n^2$. Then applying the Borell's inequality (Proposition A.2.1 in \citealt{vaart1996weak}) we get that for any $\ell_2>0$ there exists a constant $C_1>0$ such that
\begin{align}\label{gegborell}
\mathbb{P} \Big(  \sup_{g\in \mathcal{K}_{\alpha}} |\mathbb{G}_n^e (g)| > \mathbb{E} [\sup_{g\in \mathcal{K}_{\alpha}} |\mathbb{G}_n^e (g)| \;|\;\X_n] + C_1h^{d/2} \sqrt{\log n} \;|\;\X_n\Big) \leq n^{-\ell_2}.
\end{align}
Next we will find an upper bound for $\mathbb{E} [\sup_{g\in \mathcal{K}_{\alpha}} |\mathbb{G}_n^e (g)| \;|\;\X_n]$. Denote
\begin{align}\label{gne12}
\mathbb{G}_n^{e,(1)} (g) = \frac{1}{\sqrt{n}} \sum_{i=1}^n e_i g(X_i),\; \text{ and } \mathbb{G}_n^{e,(2)} (g) =  \mathbb{P}_n(g)\Big[\frac{1}{\sqrt{n}} \sum_{i=1}^n e_i \Big].
\end{align}
Then obviously $\mathbb{G}_n^{e} (g) = \mathbb{G}_n^{e,(1)} (g) - \mathbb{G}_n^{e,(2)} (g).$ Hence
\begin{align}\label{gnerela}
\mathbb{E} \Big[\sup_{g\in \mathcal{K}_{\alpha}} |\mathbb{G}_n^e (g)| \;|\;\X_n\Big] \leq  \mathbb{E} \Big[\sup_{g\in \mathcal{K}_{\alpha}} |\mathbb{G}_n^{e,(1)} (g)| \;|\;\X_n\Big] + \mathbb{E} \Big[\sup_{g\in \mathcal{K}_{\alpha}} |\mathbb{G}_n^{e,(2)} (g)| \;|\;\X_n\Big].
\end{align}
Given $\X_n$, define the semimetric $e_{P_n}$ on $\mathcal{K}_{\alpha}$ as follows: for $g_1,g_2\in\mathcal{K}_{\alpha}$, let
\begin{align}\label{epndef}
e_{P_n}(g_1,g_2) = & \sqrt{\mathbb{E}[ (\mathbb{G}_n^{e,(1)}(g_1) - \mathbb{G}_n^{e,(1)}(g_2))^2 \;|\;\X_n]} 
= \sqrt{\frac{1}{n} \sum_{i=1}^n [g_1(X_i) - g_2(X_i)]^2} .
\end{align}
Take $\sigma_n=\sqrt{\sigma_n^2}$. Note that $\sigma_n$ is an upper bound of the radius of $\mathcal{K}_{\alpha}$ using the semimetric $e_{P_n}$. Also note that by using Jensen's inequality, $\sup_{g\in\mathcal{K}_\alpha}\mathbb{E}[|\mathbb{G}_n^{e,(1)}(g)|\, \;|\;\X_n]\leq\sigma_n.$ Since $N(\mathcal{K}_{\alpha}, e_{P_n},\varepsilon b) \leq \sup_Q N(\mathcal{K}_{\alpha},e_Q,\varepsilon b) \leq (A/\varepsilon)^v$, for $\varepsilon\in(0,1]$, using Dudley's maximal inequality (Corollary 2.2.8, \citealt{vaart1996weak}) we have
\begin{align}\label{ge1bound}
\mathbb{E} \Big[\sup_{g\in \mathcal{K}_{\alpha}} |\mathbb{G}_n^{e,(1)} (g)| \;|\;\X_n\Big] & \lesssim \int_0^{\sigma_n} \sqrt{\log N(\mathcal{K}_{\alpha},e_{P_n},\varepsilon)} d \varepsilon + \sup_{g\in\mathcal{K}_\alpha}\mathbb{E}[|\mathbb{G}_n^{e,(1)}(g)|\, \;|\;\X_n] \nonumber\\
%
%
& \lesssim \int_0^{\sigma_n/b} \sqrt{v \log (A/\varepsilon)} d \varepsilon + \sigma_n \nonumber\\
& \lesssim \sigma_n \sqrt{v \log (Ab/\sigma_n)}  \nonumber \\
& \lesssim h^{d/2} \sqrt{\log n},
\end{align}
where the computation of the integral is similar to (\ref{entropyintegral}). Furthermore, using Jensen's inequality, we have $\sup_{g\in \mathcal{K}_{\alpha}}|\mathbb{P}_n(g)| \leq \sigma_n$. Hence
\begin{align}\label{ge2bound}
\mathbb{E} \Big[\sup_{g\in \mathcal{K}_{\alpha}} |\mathbb{G}_n^{e,(2)} (g)| \;|\;\X_n\Big] \leq \sigma_n \mathbb{E} \Big|\frac{1}{\sqrt{n}} \sum_{i=1}^n e_i \Big| \lesssim h^{d/2}.
\end{align}
Plugging (\ref{gnerela}), (\ref{ge1bound}), and (\ref{ge2bound}) into (\ref{gegborell}) we get for some constant $C_2>0$,
\begin{align*}
\mathbb{P} \Big(  \sup_{g\in \mathcal{K}_{\alpha}} |\mathbb{G}_n^e (g)| >  C_2h^{d/2} \sqrt{\log n} \;|\;\X_n\Big) \leq n^{-\ell_2}.
\end{align*}
Using the relation in (\ref{multiplierbootscaled}) we then obtain (\ref{multiplierboottalag}). 
\end{proof}

For $x\in\mathcal{H}$, and $z\in\mathbb{S}_{d-1}$, denote 
\begin{align}\label{qjxzdef}
q(\cdot;x,z)=
\begin{cases}
z^\top  M(x)^\top  d^2 K(\frac{x - \cdot}{h})  & \text{ for case (a)},\\
z^\top  L(x)^\top  \nabla K(\frac{x - \cdot}{h})  & \text{ for case (b)}.
\end{cases}
\end{align}
Given $\X_n$ and any $\epsilon>0$, consider the classes of functions:
\begin{align}\label{djepsilon}
\mathcal{D}(\epsilon) = \{ q(\cdot;x,z) - q(\cdot;y,z): \; x\in \mathcal{S}, \|x-y\|\leq \epsilon, z\in\mathbb{S}_{d-1} \}.
\end{align}
%
Let 
\begin{align}\label{xinje}
\Xi_{n}^{e}(\epsilon) = \frac{1}{\sqrt{h^d}}\sup_{g\in \mathcal{D}(\epsilon) } \mathbb{G}_n^e(g).
\end{align} 

\begin{lemma}\label{differenceprocess}
Assume that (F1), (F2), (K1), and (K2) hold. Suppose $h\rightarrow0$ and $nh^q\rightarrow\infty$ for some $q>0$ as $n\rightarrow\infty$. Let $\varepsilon_n$ be a sequence such that $\varepsilon_n/h\rightarrow0$. For any $\ell_1,\ell_2>0$, there exit a constant $C>0$ and an event $E_2\in\sigma(\X_n)$ with $\mathbb{P}(E_2)\geq 1-n^{-\ell_1}$ such that on $E_2$, for $n$ large enough, we have 
\begin{align}\label{differenceprocessextreme}
\mathbb{P} \Big(  \Xi_{n}^{e}(\varepsilon_n)  >  C (\frac{\varepsilon_n}{h}) \sqrt{\log n} \;|\;\X_n\Big) \leq n^{-\ell_2}.
\end{align}
\end{lemma}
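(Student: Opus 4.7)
The plan closely parallels the proof of Lemma~\ref{multiplierboottalaglemma}, with the only substantive difference being that the localization $\|x-y\|\le\varepsilon_n$ in the definition of $\mathcal{D}_j(\varepsilon_n)$ is exploited to sharpen the envelope and variance estimates by a factor of $\varepsilon_n/h$. First I would identify a uniform envelope for $\mathcal{D}_j(\varepsilon_n)$. Writing a typical element as $g = q_j(\cdot;x,z) - q_j(\cdot;y,z)$, the Lipschitz continuity of $\nabla K$ and $d^2 K$ from (K1), combined with the smoothness of $x\mapsto M(x)$ and $x\mapsto L(x)$ (which follows from (F1), (F2), and the eigengap in (F2)), gives a pointwise bound of order $\varepsilon_n/h$ on a region of volume $O(h^d)$. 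Consequently, the key variance estimate $\sup_{g\in\mathcal{D}_j(\varepsilon_n)}\mathbb{E}[g(X)^2]\lesssim h^d(\varepsilon_n/h)^2$ follows by the change of variables $u=(x-\cdot)/h$, after splitting the integrand into a kernel-increment piece and a coefficient-increment piece $[M(x)-M(y)]$ or $[L(x)-L(y)]$.

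Next I would verify that $\mathcal{D}_j(\varepsilon_n)$ is VC-type with constants independent of $\varepsilon_n$ and $n$. Since each $q_j(\cdot;x,z)$ forms a VC-type class in $(x,z)$ (by the same reasoning used for $\mathcal{K}_\alpha$ in Lemma~\ref{talabound}, together with the boundedness of $M$ and $L$ on $\mathcal{H}$), the pairwise differences inherit VC-type covering bounds of the form $\sup_Q N(\mathcal{D}_j(\varepsilon_n), e_Q, \tau(\varepsilon_n/h))\leq (A/\tau)^v$ for $\tau\in(0,1]$, with $A,v$ absolute.

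With these preparations, Talagrand's inequality (Lemma~\ref{talagrandineq}) applied to the squared class $\{g^2:g\in\mathcal{D}_j(\varepsilon_n)\}$ produces an event $E_2\in\sigma(X_1^n)$ of probability at least $1-n^{-\ell_1}$ on which $\sigma_n^2:=\sup_{g\in\mathcal{D}_j(\varepsilon_n)}\mathbb{P}_n(g^2) \lesssim h^d(\varepsilon_n/h)^2$, paralleling the derivation of (\ref{signan2cond}). Conditional on $X_1^n$ and on $E_2$, I would decompose $\mathbb{G}_n^e = \mathbb{G}_n^{e,(1)} - \mathbb{G}_n^{e,(2)}$ as in (\ref{gne12}) and bound each term separately. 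For $\mathbb{G}_n^{e,(1)}$, the Dudley entropy integral against the semimetric $e_{P_n}$ defined in (\ref{epndef}), combined with the covering bound above and the radius $\sigma_n\lesssim h^{d/2}(\varepsilon_n/h)$, gives $\mathbb{E}[\sup_g|\mathbb{G}_n^{e,(1)}(g)|\mid X_1^n]\lesssim h^{d/2}(\varepsilon_n/h)\sqrt{\log n}$; for $\mathbb{G}_n^{e,(2)}$, the same order bound follows from $|\mathbb{P}_n(g)|\le \sigma_n$ together with the standard deviation of $n^{-1/2}\sum_i e_i$. Finally, Borell's Gaussian concentration inequality, applied conditionally on $X_1^n$, upgrades the expectation bound to a high-probability tail bound, and dividing through by $\sqrt{h^d}$ yields (\ref{differenceprocessextreme}).

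The main obstacle is securing the variance estimate $\sup_{g\in\mathcal{D}_j(\varepsilon_n)}\mathbb{E}[g(X)^2]\lesssim h^d(\varepsilon_n/h)^2$ uniformly in $(x,y,z)$ with $x\in\mathcal{S}$ and $\|x-y\|\leq\varepsilon_n$: both the kernel increment and the coefficient increment $M(x)-M(y)$ (respectively $L(x)-L(y)$) contribute, and one must carefully separate the $\|x-y\|/h$ pointwise smallness from the $h^d$ support-volume factor arising from the change of variables. Once this bound is in place, the remainder of the argument is a direct transcription of the Talagrand--Dudley--Borell scheme already executed in Lemma~\ref{multiplierboottalaglemma}.
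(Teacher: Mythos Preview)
Your proposal is correct and follows essentially the same route as the paper's proof: you establish the key moment bound $\sup_{g\in\mathcal{D}_j(\varepsilon_n)}\mathbb{E}[g(X)^2]\lesssim h^d(\varepsilon_n/h)^2$ by splitting into a kernel-increment piece and a coefficient-increment piece, verify the VC-type property, apply Talagrand's inequality to the squared class to obtain the event $E_2$ on which $\mathbb{P}_n(g^2)\lesssim h^d(\varepsilon_n/h)^2$, and then run the Borell--Dudley argument on the decomposition $\mathbb{G}_n^e=\mathbb{G}_n^{e,(1)}-\mathbb{G}_n^{e,(2)}$ exactly as in Lemma~\ref{multiplierboottalaglemma}. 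The paper additionally tracks the fourth-moment bound $\eta_{4,\sup}^{(j)}\lesssim h^d(\varepsilon_n/h)^4$ to feed Talagrand's inequality a variance bound for the squared class, and it uses a fixed constant envelope $2b_0$ (via embedding $\mathcal{D}_j(\varepsilon_n)$ into a larger $\varepsilon_n$-free class $\mathcal{W}_j$) rather than the $\varepsilon_n/h$-scaled envelope you suggest, but these are minor technical variants that lead to the same conclusion.
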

%
\begin{proof}
Given $\X_n$, notice that $\{\mathbb{G}_n^e(g):g\in \mathcal{D}(\varepsilon_n)\}$ is a centered Gaussian process indexed by $\mathcal{D}(\varepsilon_n)$. Let
\begin{align}\label{khalpha}
\mathcal{D}^2(\varepsilon_n) = \{g^2:g\in \mathcal{D}(\varepsilon_n)\}.
\end{align}
For $k=2,4$, let $\eta_k(x,y,z):= \mathbb{E}\{ [ q(X;x,z) - q(X;y,z) ]^k\}$, where $q(\cdot;x,z)$ is defined in (\ref{qjxzdef}). Next we show that for $k=2,4$,
\begin{align}\label{etakjxyz}
\eta_{k,\sup} := \sup_{x\in\mathcal{S}, \|x-y\|\leq\varepsilon_n,z\in\mathbb{S}_{d-1}} \eta_k(x,y,z) \lesssim  h^d (\frac{\varepsilon_n}{h})^k.
\end{align}
First we consider case (a), for which we have by using change of variables $u=(x-w)/h$,
\begin{align*}
\eta_k(x,y,z) = & \int_{\mathbb{R}^d} \Big( z^\top  M(x)^\top  d^2 K\Big(\frac{x-w}{h}\Big) - z^\top  M(y)^\top  d^2 K\Big(\frac{y - w}{h}\Big)\Big)^k f(w)dw \\
= & h^d \int_{\mathbb{R}^d} \Big( z^\top  M(x)^\top  d^2 K(u) - z^\top  M(y)^\top  d^2 K\Big(\frac{y - x}{h}+u\Big)\Big)^k f(x-hu)du \\
\leq & 2^{k-1}h^d [\kappa_{k,\text{I}}(x,y,z)  + \kappa_{k,\text{II}}(x,y,z) ],
\end{align*}
where 
\begin{align*}
&\kappa_{k,\text{I}}(x,y,z) = \int_{\mathbb{R}^d} \Big\{ z^\top  [M(x) - M(y)]^\top  d^2 K(u) \Big\}^k f(x-hu)du,\\
&\kappa_{k,\text{II}}(x,y,z) = \int_{\mathbb{R}^d} \Big\{ z^\top  M(y)^\top  \Big[d^2 K(u) - d^2 K\Big(\frac{y - x}{h}+u\Big)\Big]\Big\}^k f(x-hu)du.
\end{align*}
Let $\widebar D_1(x,y) = \nabla f(x) - \nabla f(y)$, $\widebar D_2(x,y) = \nabla^2 f(x) - \nabla^2 f(y)$ and $H(x,y,t)=\nabla^2 f(y) + tD_2(x,y)$ for $t\in[0,1]$. Using the definition of $T$ in (\ref{tsigmad0}) we can write
\begin{align*}
[M(x) - M(y)]^\top  d^2 K(u) = T(\nabla^2 f(x);\nabla^2 K(u))\nabla f(x) - T(\nabla^2 f(y);\nabla^2 K(u))\nabla f(y).
\end{align*}
Similar to (\ref{ttaylorexp}) and (\ref{tdiffbound}), when $\varepsilon_n$ is small enough we get for all $x\in\mathcal{S}$, $\|x-y\|\leq\varepsilon_n$, and $u\in\mathbb{R}^d$,
\begin{align*}
 \|T(\nabla^2 f(x);\nabla^2 K(u)) - T(\nabla^2 f(y);\nabla^2 K(u))\|_F
\leq &\sup_{t\in[0,1]}\| T^{(1)}(H(x,y,t),\widebar D_2(x,y); \nabla^2 K(u)) \|_F \\
\lesssim & \|\widebar D_2(x,y)\|_F \|\nabla^2 K(u)\|_F.
\end{align*}
Hence using the assumptions (F1) and (K1), we have
\begin{align*}
\sup_{x\in\mathcal{S}, \|x-y\|\leq\varepsilon_n,z\in\mathbb{S}_{d-1}} \kappa_{k,\text{I}}(x,y,z) \lesssim \sup_{x\in\mathcal{S}, \|x-y\|\leq\varepsilon_n} (\|\widebar D_1(x,y)\|_F + \|\widebar D_2(x,y)\|_F)^k \lesssim (\varepsilon_n)^k.
\end{align*}
Also the assumption (K1) leads to
\begin{align*}
\sup_{x\in\mathcal{S}, \|x-y\|\leq\varepsilon_n,z\in\mathbb{S}_{d-1}} \kappa_{k,\text{II}}(x,y,z) \lesssim (\frac{\varepsilon_n}{h})^k.
\end{align*}
Hence the result in (\ref{etakjxyz}) follows for case (a). For case (b), the role of $M$ is replaced by $L$, and the calculation is similar and omitted. 
Hence for both cases (a) and (b),
\begin{align}
&\sup_{g\in \mathcal{D}^2(\varepsilon_n)} \mathbb{E}(g(X)) = \eta_{2,\sup} \lesssim h^d (\frac{\varepsilon_n}{h})^2, \label{supgdj2e}\\
&\sup_{g\in \mathcal{D}^2(\varepsilon_n)} \text{Var}(g(X)) \leq \eta_{4,\sup} \lesssim h^d (\frac{\varepsilon_n}{h})^4.\label{supgdj2var}
\end{align}
Next we show that both $\mathcal{D}(\varepsilon_n)$ and $\mathcal{D}^2(\varepsilon_n)$ are VC type classes with constant envelopes. For $\delta_0$ given in assumption (F2), let
\begin{align*}
\mathcal{G}^{\delta_0} = \{q(\cdot;x,z): x\in\mathcal{S}\oplus\delta_0, z\in\mathbb{S}_{d-1}\}.
\end{align*}
Then following similar arguments in the proof of Proposition~\ref{2ndapproxtheorem}, we can show that $\mathcal{G}^{\delta_0}$ has a constant envelop $b_0$ and there exist $A_0>0$ and $v_0>0$ such that for every $0<\varepsilon\leq 1$, $\sup_Q N(\mathcal{G}^{\delta_0},e_Q,\varepsilon b_0) \leq (\frac{A_0}{\varepsilon})^{v_0},$ 
where the supremum is taken over all finitely discrete probability measures.
Given $\X_n$, let $\mathcal{W} = \{ q(\cdot;x,z) - q(\cdot;y,z): \; x,y\in\mathcal{S}\oplus\delta_0, z\in\mathbb{S}_{d-1} \}\subset\{g_1(\cdot)-g_2(\cdot): \; g_1,g_2\in\mathcal{G}^{\delta_0}\}.$ When $\varepsilon_n$ is small enough, we have $\mathcal{D}(\varepsilon_n)\subset \mathcal{W}$. Hence
\begin{align}\label{d1mhenv}
\sup_{g\in \mathcal{D}(\varepsilon_n)} |g(y)| \leq \sup_{g\in\mathcal{W}}|g(y)| \leq 2b_0.
\end{align}
Also using Lemma A.6 in \cite{chernozhukov2014gaussian} we have for $0<\varepsilon\leq 1$,
\begin{align}\label{d1mhcn}
\sup_Q N( \mathcal{D}(\varepsilon_n),e_Q,2b_0\varepsilon) \leq \sup_Q N(\mathcal{W},e_Q,2b_0\varepsilon) \leq (\frac{A_0}{\varepsilon})^{2v_0}.
\end{align}
Let $\mathcal{W}^2 =\{g^2: g\in \mathcal{W}\}$. When $\varepsilon_n$ is small enough, we have $\mathcal{D}^2(\varepsilon_n)\subset \mathcal{W}^2$. Then 
\begin{align*}
\sup_{g\in \mathcal{D}^2(\varepsilon_n)} |g(y)| \leq \sup_{g\in\mathcal{W}^2}|g(y)| \leq 4b_0^2,
\end{align*}
and using Corollary A.1 in \cite{chernozhukov2014gaussian} we have for $0<\varepsilon\leq 1$,
\begin{align}\label{d2mhcn}
\sup_Q N( \mathcal{D}^2(\varepsilon_n),e_Q,4b_0^2\varepsilon) \leq \sup_Q N(\mathcal{W}^2,e_Q,4b_0^2\varepsilon) \leq (\frac{A_0}{\varepsilon})^{2v_0}.
\end{align}
%
Using (\ref{d1mhcn}), (\ref{d2mhcn}), (\ref{supgdj2var}), and the Talagrand's inequality (Lemma~\ref{talagrandineq}), for any $\ell_1>0$, when $n$ is large enough, we have with probability at least $1-n^{\ell_1}$
\begin{align}\label{condevent2}
\sup_{g\in \mathcal{D}^2(\varepsilon_n) } |\mathbb{G}_n(g)| \lesssim h^{d/2} (\frac{\varepsilon_n}{h})^2 \sqrt{\log n} .
\end{align}
Denote the above event by $E_2$. The rest of the proof is conditional on the event $E_2$. Then using (\ref{supgdj2e}) it follows that
\begin{align}\label{wtsigma2}
\wt \sigma_n^2  := \sup_{g\in \mathcal{D}(\varepsilon_n) } \mathbb{P}_n (g^2) & \leq  \sup_{g\in \mathcal{D}^2(\varepsilon_n) } \mathbb{E} [g(X)] + n^{-1/2} \sup_{g\in \mathcal{D}^2(\varepsilon_n) } |\mathbb{G}_n(g)| \nonumber\\
& \lesssim h^d (\frac{\varepsilon_n}{h})^2 + n^{-1/2} h^{d/2} (\frac{\varepsilon_n}{h})^2 \sqrt{\log n} 
 \lesssim h^d (\frac{\varepsilon_n}{h})^2.
\end{align}
Note that $\sup_{g\in \mathcal{D}(\varepsilon_n) } \text{Var}(\mathbb{G}_n^e (g) \;|\;\X_n) \leq \wt\sigma_n^2$. Then applying the Borell's inequality (Proposition A.2.1, \citealt{vaart1996weak}) we have that for any $\ell_2>0$ there exists a constant $C_1>0$ such that
\begin{align}\label{gnegprob}
\mathbb{P} \Big(  \sup_{g\in \mathcal{D}(\varepsilon_n) } |\mathbb{G}_n^e (g)| > \mathbb{E} \Big[\sup_{g\in \mathcal{D}(\varepsilon_n) } |\mathbb{G}_n^e (g)| \;|\;\X_n\Big] + C_1h^{d/2}(\frac{\varepsilon_n}{h}) \sqrt{\log n} \;|\;\X_n\Big) \leq n^{-\ell_2}.
\end{align}
Next we compute $\mathbb{E} [\sup_{g\in \mathcal{D}(\varepsilon_n) } |\mathbb{G}_n^e (g)| \;|\;\X_n]$. Recall $\mathbb{G}_n^{e,(1)}$ and $\mathbb{G}_n^{e,(2)}$ defined in (\ref{gne12}). We have
\begin{align}\label{gegineq}
\mathbb{E} \Big[\sup_{g\in \mathcal{D}(\varepsilon_n) } |\mathbb{G}_n^e (g)| \;|\;\X_n\Big] \leq  \mathbb{E} \Big[\sup_{g\in \mathcal{D}(\varepsilon_n) } |\mathbb{G}_n^{e,(1)} (g)| \;|\;\X_n\Big] + \mathbb{E} \Big[\sup_{g\in \mathcal{D}(\varepsilon_n) } |\mathbb{G}_n^{e,(2)} (g)| \;|\;\X_n\Big].
\end{align}
Note that $\wt\sigma_n=\sqrt{\wt\sigma_n^2}$ is an upper bound of the radius of $\mathcal{D}(\varepsilon_n)$ using the semimetric $e_{P_n}$ defined in (\ref{epndef}). Also note that by Jensen's inequality, $\sup_{g\in\mathcal{D}(\varepsilon_n)}\mathbb{E}[|\mathbb{G}_n^{e,(1)}(g)|\, \;|\;\X_n]\leq\wt\sigma_n.$ Using (\ref{d1mhcn}) and Dudley's maximal inequality (Corollary 2.2.8, \citealt{vaart1996weak}) we have
\begin{align}\label{ge1gbound}
\mathbb{E} [\sup_{g\in \mathcal{D}(\varepsilon_n) } |\mathbb{G}_n^{e,(1)} (g)| \;|\;\X_n] & \lesssim \int_0^{\tilde{\sigma}_n} \sqrt{\log N(\mathcal{D}(\varepsilon_n) ,e_{P_n},\varepsilon)} d \varepsilon + \sup_{g\in \mathcal{D}(\varepsilon_n) } \mathbb{E}[|\mathbb{G}_n^{e,(1)} (g)| \;|\;\X_n] \nonumber\\
%
%
& \lesssim \int_0^{\tilde\sigma_n/(2b_0)} \sqrt{2v_0 \log (A_0/\varepsilon)} d \varepsilon + \tilde{\sigma}_n \nonumber\\
& \lesssim \wt\sigma_n \sqrt{v_0 \log (A_0b_0/\wt\sigma_n)} \nonumber  \\
& \lesssim h^{d/2} (\frac{\varepsilon_n}{h}) \sqrt{\log n},
\end{align}
where we have used (\ref{wtsigma2}). Furthermore, we have that $\sup_{g\in \mathcal{D}(\varepsilon_n) }|\mathbb{P}_n(g)| \leq \wt\sigma_n$ by using Jensen's inequality. Hence
\begin{align}\label{ge2gbound}
\mathbb{E} \Big[\sup_{g\in \mathcal{D}(\varepsilon_n) } |\mathbb{G}_n^{e,(2)} (g)| \;|\;\X_n\Big] \leq \wt\sigma_n \mathbb{E} \Big|\frac{1}{\sqrt{n}} \sum_{i=1}^n e_i \Big| \lesssim h^{d/2} (\frac{\varepsilon_n}{h}).
\end{align}
Plugging (\ref{gegineq}), (\ref{ge1gbound}), and (\ref{ge2gbound}) into (\ref{gnegprob}) we get for some constant $C_2>0$,
\begin{align*}
\mathbb{P} \Big(  \sup_{g\in \mathcal{D}(\varepsilon_n) } |\mathbb{G}_n^e (g)| >  C_2h^{d/2} (\frac{\varepsilon_n}{h}) \sqrt{\log n} \;|\;\X_n\Big) \leq n^{-\ell_2}.
\end{align*}
and hence (\ref{differenceprocessextreme}) follows and the proof is completed. 
\end{proof}

{\bf Proof of Proposition~\ref{multiplierbootapp}}
\begin{proof}
%
%
Let $E_\dagger$ be the intersection of the events $E_1$ and $E_2$ and the event specified in (\ref{ridgeinclusion}). Then $E_\dagger\in\sigma(\X_n)$ and $\mathbb{P}(E_\dagger)\geq 1- n^{-1}$ by choosing $\ell,\ell_1,\ell_2$ large enough in lemmas~\ref{ridgeinclusionlemma}, \ref{multiplierboottalaglemma} and \ref{differenceprocess}. We only focus on case (a); the proof for case (b) is similar and hence omitted. For clarity the rest of the proof is divided into the following five steps.

{\bf Step 1.} Lemma~\ref{multiplierboottalaglemma} implies that there exist constants $C_{k,e}>0$ for $k=0,1,2$ such that on the event $E_\dagger$ when $n$ is large enough 
\begin{align}\label{multiplierbootecondpb}
\mathbb{P} ( \varepsilon_{n,e}^{(k)} > C_{k,e} \gamma_{n,h}^{(k)} \;|\;\X_n) \leq n^{-1}.
\end{align}
Consequently,
\begin{align}\label{multiplierbootepb}
 \mathbb{P} ( \varepsilon_{n,e}^{(k)} > C_{k,e} \gamma_{n,h}^{(k)} ) 
= & \mathbb{E} [\mathbb{P} ( \varepsilon_{n,e}^{(k,e)} > C_{k,e} \gamma_{n,h}^{(k)} \;|\;\X_n )] \nonumber\\
\leq & \mathbb{E} [\mathbb{P} ( \varepsilon_{n,e}^{(k,e)} > C_{k,e} \gamma_{n,h}^{(k)} \;|\;\X_n )\mathbf{1}(\X_n\in E_\dagger)] + \mathbb{P}(E_\dagger^\complement) 
%
\leq  2n^{-1}.
\end{align}

{\bf Step 2.} We denote
\begin{align*}
%
Z^e = & \sup_{g\in \mathcal{F}} \mathbb{G}_n^e(g) = \omega_n \sup_{x\in\mathcal{S}} \|M(x)^\top D_{n,2}^e(x) \|,
\end{align*}
where $D_{n,2}^e(x)=d^2 \wh f^e(x)- d^2 \wh f(x)$. We want to show that there exists a constant $C_1>0$ such that
\begin{align}\label{multibootstep2}
\mathbb{P} \Big( \Big|\omega_n \sup_{x\in\wh{\mathcal{S}}(\rho_n)} |\wh p^e(x) - \wh p(x)| -  Z^e\Big| > C_1\sqrt{\log n} \frac{\rho_n^{1/\beta^\prime}}{h} \Big) \lesssim n^{-1},
\end{align}
We divide the proof in this step into the following three substeps.

{\bf Step 2a.} Let $\wh M$ the direct plug-in estimator of $M$ as defined below. Suppose that $\nabla^2 \wh f(x)$ has $\wh q=\wh q(x)$ distinct eigenvalues: $\wh \mu_1(x)>\cdots>\wh \mu_{\wh q}(x)$, each with multiplicities $\wh \ell_1,\cdots,\wh \ell_{\wh q}$. Let $\wh s_0=0$, $\wh s_1=\ell_1,\cdots,\wh s_{\wh q}=\wh \ell_1+\cdots+\wh \ell_{\wh q}$. Then for $j=1,\cdots,\wh q$, $\wh \mu_j(x)=\wh \lambda_{\wh s_{j-1}+1}(x)=\cdots=\wh \lambda_{\wh s_j}(x)$. We write $\wh P_j(x) = \wh E_j(x)\wh E_j(x)^\top ,$ where $\wh E_j(x)$ is a $d\times \wh\ell_j$ matrix with orthonormal eigenvectors $\wh v_{\wh s_{j-1}+1}(x),\cdots,\wh v_{\wh s_j}(x)$ as its columns. Suppose that there are $\wh q_r=\wh q_r(x)$ distinct eigenvalues large than $\wh{\lambda}_{r+1}(x)$. For $j=1,\cdots,\wh q$, let 
\begin{align}\label{sjxhatexpress}
\wh S_j(x) =  \sum_{\substack{k=1 \\ k\neq j}}^{\wh q}\frac{1}{\wh \mu_j(x) - \wh \mu_k(x)} \wh P_k(x), 
\end{align}
and
\begin{align}\label{MhatTexpress}
\wh M(x)^\top  =  \sum_{j=\wh q_r+1}^{\wh q} \{  \wh P_j(x) \otimes [\wh S_k(x) \nabla \wh f(x)]^\top  + \wh S_j(x) \otimes [\wh P_k(x) \nabla \wh f(x)]^\top  \} \mathbf{D}.
\end{align}
Define $\wh g_{x,z}(y) = h^{d/2}z^\top  \wh M(x)^\top  d^2K(\frac{x-y}{h})$ and the class of functions
\begin{align*}
\wh{\mathcal{F}} = \{\wh g_{x,z}(\cdot): x\in \wh{\mathcal{S}}(\rho_n),\; z\in\mathbb{S}_{d-1}  \}.
\end{align*}
Then similar to (\ref{ridgenessapprox}), conditional on $\X_n$ we have that when $\max\{\varepsilon_{n,e}^{(0)},\varepsilon_{n,e}^{(1)},\varepsilon_{n,e}^{(2)}\}\rightarrow 0$, 
\begin{align}\label{multipbootappbound}
\Big|\sup_{x\in\wh{\mathcal{S}}(\rho_n)} |\wh p^e(x) -\wh p(x) | - \sup_{x\in\wh{\mathcal{S}}(\rho_n)} \|\wh M(x)^\top D_{n,2}^e(x) \| \Big| = O(\varepsilon_{n,e}^{(1)} + (\varepsilon_{n,e}^{(2)})^2).
\end{align}
Note that compared with (\ref{ridgenessapprox}), here the $O(h^2)$-term does not appear on the right-hand side of (\ref{multipbootappbound}) because $\mathbb{E}[\partial^\alpha\wh f^e(x) \;|\;\X_n]=\partial^\alpha\wh f(x)$. Denote 
\[\wh Z^e = \sup_{g\in \wh{\mathcal{F}}} \mathbb{G}_n^e(g) = \omega_n \sup_{x\in\wh{\mathcal{S}}(\rho_n)} \|\wh M(x)^\top D_{n,2}^e(x) \|.\] 
By (\ref{multiplierbootecondpb}) and (\ref{multipbootappbound}), and using the assumption (H), on the event $E_\dagger$ we get that for some constant $C_2>0$,
\begin{align}\label{ridgenessapproxmulti}
& \mathbb{P} \Big( \Big| \omega_n \sup_{x\in\wh{\mathcal{S}}(\rho_n)} |\wh p^e(x) - \wh p(x)| - \wh Z^e \Big| > C_2 h\sqrt{\log n}  \;|\;\X_n\Big) \leq n^{-1}.
\end{align}
%
%
%
%
Next we will compare $\wh Z^e$ and $Z^e$. Note that $|\wh Z^e - Z^e| \leq \wh Z^{e,(1)} + \wh Z^{e,(2)},$ where
\begin{align*}
&\wh Z^{e,(1)} = \omega_n \Big |\sup_{x\in\wh{\mathcal{S}}(\rho_n)} \|\wh M(x)^\top D_{n,2}^e(x) \| - \sup_{x\in\wh{\mathcal{S}}(\rho_n)} \| M(x)^\top D_{n,2}^e(x) \| \Big|, \\
&\wh Z^{e,(2)} =  \omega_n \Big | \sup_{x\in\wh{\mathcal{S}}(\rho_n)} \| M(x)^\top D_{n,2}^e(x) \|  - \sup_{x\in\mathcal{S}} \| M(x)^\top D_{n,2}^e(x) \|  \Big| .
\end{align*}
We will study $\wh Z^{e,(1)}$ and $\wh Z^{e,(2)}$, respectively. 

{\bf Step 2b.} First we consider $\wh Z^{e,(1)}$. Notice that on the event $E_\dagger$, when $n$ is large enough,
\begin{align*}
\wh Z^{e,(1)} 
\leq &\omega_n\sup_{x\in\mathcal{S}\oplus\delta_0} \Big| \|\wh M(x)^\top D_{n,2}^e(x) \| - \| M(x)^\top D_{n,2}^e(x) \| \Big| \\
\leq &\omega_n\sup_{x\in\mathcal{S}\oplus\delta_0}  \|[\wh M(x)^\top   - M(x)^\top ]D_{n,2}^e(x) \| .
\end{align*}
When $\varepsilon_n^{(2)}$ is small enough we can show that 
\begin{align}\label{whmxoverbound}
\sup_{x\in\mathcal{S}\oplus\delta_0 } \|[\wh M(x)^\top   - M(x)^\top ]D_{n,2}^e(x) \| \lesssim (\varepsilon_n^{(1)} + \varepsilon_n^{(2)})\varepsilon_{n,e}^{(2)}.
\end{align}
The proof of (\ref{whmxoverbound}) is long and given in Step 5.
%
%
%
%
Using (\ref{originalepb}) and (\ref{multiplierbootepb}) and noticing that $\omega_n \gamma_{n,h}^{(2)}=\sqrt{\log n}$ and $\gamma_{n,h}^{(2)}/h^\beta\rightarrow\infty$ under the assumption (H), we have for some constant $C_3>0$,
\begin{align}\label{zhe1bound}
\mathbb{P} \Big( \wh Z^{e,(1)}  > C_3^2 \sqrt{\log n}\gamma_{n,h}^{(2)} \Big) 
%
%
%
\leq  \mathbb{P} (\varepsilon_{n}^{(1)} + \varepsilon_{n}^{(2)} > C_3\gamma_{n,h}^{(2)}) + \mathbb{P} ( \varepsilon_{n,e}^{(2)} > C_3\gamma_{n,h}^{(2)} )
\leq & 3n^{-1}.
\end{align}

{\bf Step 2c.} 
%
%
%
Define $\wh Z^{e,(3)} = \Xi_{n,1}^{e}(C_0\rho_n^{1/\beta^\prime})$, where $\Xi_{n,1}^{e}$ is given in (\ref{xinje}). We have that on the event $E_\dagger$,
\begin{align*}
%
\wh Z^{e,(2)} \leq & \omega_n \sup_{x\in\mathcal{S},y\in \mathcal{S} \oplus (C_0\rho_n^{1/\beta^\prime}) } \Big |  \| M(x)^\top D_{n,2}^e(x)\| - \| M(y)^\top D_{n,2}^e(y) \| \Big| \leq  \wh Z^{e,(3)}.
%
%
%
\end{align*}
Using Lemma~\ref{differenceprocess} we have that on the event $E_\dagger$, 
\begin{align}\label{zhe2bound}
\mathbb{P} \Big(  \wh Z^{e,(2)} >  CC_0\frac{\rho_n^{1/\beta^\prime}}{h}\sqrt{\log n}  \;|\;\X_n\Big) \leq \mathbb{P} \Big(  \wh Z^{e,(3)} >  CC_0\frac{\rho_n^{1/\beta^\prime}}{h}\sqrt{\log n}  \;|\;\X_n\Big) \leq 2n^{-1}.
\end{align}
Therefore using (\ref{zhe1bound}) and (\ref{zhe2bound}) we have
\begin{align*}
\mathbb{P} \Big( |\wh Z^e - Z^e|  >  CC_0 \frac{\rho_n^{1/\beta^\prime}}{h} \sqrt{\log n} \;|\;\X_n\Big) \leq 5n^{-1}.
\end{align*}
Combining this with (\ref{ridgenessapproxmulti}) we get on the event $E_\dagger$ for some constant $C_4>0$,
\begin{align*}
\mathbb{P} \Big( \Big|\omega_n \sup_{x\in\wh{\mathcal{S}}(\rho_n)} |\wh p^e(x) - \wh p(x)| -  Z^e\Big| > C_4 \frac{\rho_n^{1/\beta^\prime}}{h} \sqrt{\log n} \;|\;\X_n \Big) \leq 6n^{-1}.
\end{align*}
Hence (\ref{multibootstep2}) follows from a similar calculation for (\ref{multiplierbootepb}).
%
%
%

{\bf Step 3.} Recall $\wt Z = \sup_{g\in \mathcal{F}} G_P(g)$. We will show that for every $\gamma\in(0,1)$, there exists a random variable $\wt Z^e \stackrel{d\;|\;\X_n}{=} \wt Z $ such that for some constants $C_5,C_6>0,$
\begin{align}\label{unconditionres}
\mathbb{P} \Big(\Big|\omega_n\sup_{x\in\wh{\mathcal{S}}(\rho_n)} |\wh p^e(x) - \wh p(x)|-\wt Z^e\Big| >C_5 \alpha_n^e \Big) \leq C_6 (\gamma+n^{-1}),
\end{align}
where $\alpha_n^e = \sqrt{\log n} \frac{\rho_n^{1/\beta^\prime}}{h} + \frac{\sqrt{\log n}}{\gamma^{1+q}} (n^{1/q} \gamma_{n,h}^{(0)} + (\gamma_{n,h}^{(0)})^{1/2})$ for any $q\in[4,\infty)$. Here $\stackrel{d\;|\;\X_n}{=}$ means equality in conditional distribution given $\X_n.$ 
Using Theorem 2.2 in \cite{chernozhukov2016empirical}, where we take $b=O(1)$, $\sigma=O(h^{d/2})$, $K_n=O(\log n)$, we have that for every $\gamma\in(0,1)$ and $q$ sufficiently large, there exists a random variable $\wt Z^e \stackrel{d\;|\;\X_n}{=} \wt Z $ such that
\begin{align}\label{chern2.2}
\mathbb{P}(h^{d/2}|Z^e - \wt Z^e | > C_7b_n^e ) \leq C_8 (\gamma+n^{-1}),
\end{align}
for some constants $C_7,C_8>0,$ where $b_n^e = \frac{\log n}{\gamma^{1+1/q} n^{1/2-1/q}}  + \frac{h^{d/4} (\log n)^{3/4}}{\gamma^{1+1/q} n^{1/4}}$. 
%
%
Combining (\ref{multibootstep2}) with (\ref{chern2.2}), we then get (\ref{unconditionres}).

{\bf Step 4.} Applying Markov's inequality to (\ref{unconditionres}), for every $\kappa\in (0,1)$ we have that with probability at least $1-\kappa$,
\begin{align*}
\mathbb{P} \Big(\Big|\omega_n  \sup_{x\in\wh{\mathcal{S}}(\rho_n)} |\wh p^e(x) - \wh p(x)| -\wt Z^e\Big| >  C_5 \alpha_n^e \;|\;\X_n\Big) \leq \kappa^{-1} C_6 (\gamma+n^{-1}). 
\end{align*}
Using Lemma~\ref{Kolmogrovgeneral} we have that with probability at least $1-\kappa$
\begin{align*}
& \sup_{t\geq \bar\sigma_{0}} \Big|\mathbb{P}\Big(\omega_n  \sup_{x\in\wh{\mathcal{S}}(\rho_n)} |\wh p^e(x) - \wh p(x)| \leq t \;|\;\X_n\Big) - \mathbb{P}(\wt Z^e \leq t\;|\;\X_n) \Big| \\
\leq & \kappa^{-1} C_6 (\gamma+n^{-1}) + \sup_{t\geq \bar\sigma_{0}}\mathbb{P} (|\wt Z^e-t| \leq C_5 \alpha_n^e  \;|\;\X_n) ,
\end{align*}
which, due to the relation $\wt Z^e \stackrel{d\;|\;\X_n}{=} \wt Z $, is equivalent to 
\begin{align*}
&\sup_{t\geq \bar\sigma_{0}} \Big|\mathbb{P}\Big(\omega_n \sup_{x\in\wh{\mathcal{S}}(\rho_n)} |\wh p^e(x) - \wh p(x)| \leq t \;|\;\X_n\Big) - \mathbb{P}(\wt Z \leq t ) \Big| \\
\leq &\kappa^{-1} C_6 (\gamma+n^{-1}) + \sup_{t\geq \bar\sigma_{0}}\mathbb{P} (|\wt Z-t| \leq C_5 \alpha_n^e ).
\end{align*}
Similar to (\ref{anticoncenbound}) we have
\begin{align*}
\sup_{t\geq \bar\sigma_{0}}\mathbb{P} (|\wt Z-t| \leq C_5 \alpha_n^e ) \lesssim (\log n) \alpha_n^e,
\end{align*}
where we may increase the constant $C_5$ if necessary. Then (\ref{zetan2res}) follows immediately. 

{\bf Step 5.} We give the remaining proof to show that on the event $E_\dagger$, (\ref{whmxoverbound}) holds when $\varepsilon_n^{(2)}$ is small enough. Denote $\widebar D_{n,2}^e(x) = \nabla^2 \wh f^e(x)-\nabla^2 \wh f(x)$ so that we can write $\mathbf{D}\textsf{vec}[\widebar D_{n,2}^e(x)]= D_{n,2}^e(x)$. Then notice that by (\ref{MhTfullexpress}) and (\ref{MhatTexpress})
\begin{align*}
&M(x)^\top D_{n,2}^e(x) = \sum_{j= q_r+1}^q \Big[P_j(x) \widebar D_{n,2}^e(x) S_j(x) + S_j(x) \widebar D_{n,2}^e(x) P_j(x)\Big] \nabla f(x) , \\
%
&\wh M(x)^\top D_{n,2}^e(x) = \sum_{j= \wh q_r+1}^{\wh q}  \Big[\wh P_j(x) \widebar D_{n,2}^e(x) \wh S_j(x) + \wh S_j(x) \widebar D_{n,2}^e(x) \wh P_j(x) \Big] \nabla \wh f(x) .
\end{align*}
For any $\Sigma\in S\mathbb{R}^{d\times d}$, supposing it has eigenvalues $\gamma_1\geq\cdots\geq\gamma_r>\gamma_{r+1}\geq\cdots\geq\gamma_d$ with associated eigenvectors $u_1,\cdots,u_d$, for any given $D_0\in S\mathbb{R}^{d\times d}$, we define a matrix-valued mapping $T:S\mathbb{R}^{d\times d}\mapsto S\mathbb{R}^{d\times d}$ such that
\begin{align}\label{tsigmad0}
T(\Sigma;D_0) =\sum_{k=1}^r\sum_{j=r+1}^{d}\frac{1}{\gamma_j-\gamma_k}[u_ju_j^\top D_0u_ku_k^\top  + u_ku_k^\top D_0u_ju_j^\top ].
\end{align}
Then $T$ is an analytic matrix-valued function at $\Sigma$ and using (\ref{sjxexpress}) and (\ref{sjxhatexpress}) we can write 
\begin{align}
& M(x)^\top D_{n,2}^e(x) = T(\nabla^2 f(x);\widebar D_{n,2}^e(x)) \nabla f(x), \label{mxtdn2}\\
&\wh M(x)^\top D_{n,2}^e(x) =T(\nabla^2 \wh f(x);\widebar D_{n,2}^e(x)) \nabla \wh f(x).\label{hmxtdn2}
\end{align}
For any $D\in S\mathbb{R}^{d\times d}$, and for $\ell=1,2,\cdots,$ let
\begin{align}\label{ellderivative}
T^{(\ell)}(\Sigma,D;D_0) = \frac{d^\ell}{d \tau^\ell} T(\Sigma + \tau D;D_0) \Big |_{\tau=0} .
\end{align}
Recall that $\widebar D_{n,2}(x) = \nabla^2 \wh f(x)-\nabla^2 f(x)$, and $H_n(x,t)=\nabla^2 f(x)+t  \widebar D_{n,2}(x)$ for $t\in[0,1]$. By Taylor expansion, we get
\begin{align} \label{ttaylorexp}
&T(\nabla^2 \wh f(x);\widebar D_{n,2}^e(x)) - T(\nabla^2 f(x);\widebar D_{n,2}^e(x)) \nonumber\\
= &\int_0^1 (1-t) T^{(1)}(H_n(x,t),\widebar D_{n,2}(x); \widebar D_{n,2}^e(x)) dt. 
\end{align}
We use the notation defined between (\ref{rnxexpress}) and (\ref{sjxtexpress}).
For $1\leq j\neq k\leq q$, and any $D_0\in S\mathbb{R}^{d\times d}$, let 
\begin{align*}
T_{jk,x,t}(H_n(x,t);D_0) = \nu_{jk}(x,t) [  P_j(x,t) D_0 P_k(x,t) ].
\end{align*}
Define $T_{j,x,t}(H_n(x,t);D_0)=T_{j,x,t,\dagger}(H_n(x,t);D_0) + T_{j,x,t,\ddagger}(H_n(x,t);D_0)$, where
\begin{align*}
&T_{j,x,t,\dagger}(H_n(x,t);D_0)=P_j(x,t)D_0S_j(x,t)=\sum_{k\neq j}T_{jk,x,t}(H_n(x,t);D_0),\\
&T_{j,x,t,\ddagger}(H_n(x,t);D_0)=S_j(x,t)D_0P_j(x,t)=\sum_{k\neq j}T_{kj,x,t}(H_n(x,t);D_0),
\end{align*}
with $S_j(x,t)$ given in (\ref{sjxtexpress}). The $\ell$th G\^{a}teaux derivatives of $T_{kj,x,t}$, $T_{j,x,t,\dagger}$, $T_{j,x,t,\ddagger}$ and $T_{j,x,t}$, denoted by $T_{kj,x,t}^{(\ell)}$, $T_{j,x,t,\dagger}^{(\ell)}$, $T_{j,x,t,\ddagger}^{(\ell)}$ and $T_{j,x,t}^{(\ell)}$, respectively, can be defined in a way similar to (\ref{ellderivative}). Then notice that 
\begin{align}\label{thnsum}
T(H_n(x,t);D_0) = \sum_{j= q_r+1}^q T_{j,x,t}(H_n(x,t);D_0).
\end{align}
Let $\mathbb{C}$ be the set of all complex numbers. For any $z\in\mathbb{C}$ and $z\neq \mu_k(x,t)$, $k=1,\cdots,q$, the matrix $z\mathbf{I}_d - H_n(x,t)$ is nonsingular and 
\begin{align}\label{inversecomplex}
[z\mathbf{I}_d - H_n(x,t)]^{-1}=\sum_{\ell=1}^q (z-\mu_\ell(x,t))^{-1} P_\ell(x,t).
\end{align}
Let $\mathcal{C}_k=\mathcal{C}_k(x,t)$ be a sufficiently small circle in the complex plane centered at $\mu_k(x,t)$ and oriented in the counterclock direction. Let $\mathbf{i}=\sqrt{-1}$ be the imaginary unit. By Cauchy's theorem we can write
\begin{align}\label{tjkxthnxt}
T_{jk,x,t}(H_n(x,t);D_0) = \frac{1}{(2\pi\mathbf{i})^2} \oint_{\mathcal{C}_k} \oint_{\mathcal{C}_j} \frac{1}{z-\wt z} [z\mathbf{I}_d - H_n(x,t)]^{-1} D_0 [\wt z\,\mathbf{I}_d - H_n(x,t)]^{-1} dzd\wt z.
\end{align}
The idea of using contour integrals to represent matrix-valued functions has been used in classical matrix perturbation theory (\citealt{kato2013perturbation}). Also see \cite{shapiro2002differentiability}. Notice that for $z\in \cup_{k=1}^q\mathcal{C}_k$, when $\tau\rightarrow 0$,
\begin{align}\label{aidhnxt}
&[z\mathbf{I}_d - H_n(x,t) -\tau D]^{-1} \nonumber\\
= &[z\mathbf{I}_d - H_n(x,t)]^{-1} + \tau [z\mathbf{I}_d - H_n(x,t)]^{-1}D[z\mathbf{I}_d - H_n(x,t)]^{-1} + o(\tau\|D\|_F).
\end{align}
Using (\ref{tjkxthnxt}), (\ref{aidhnxt}) and the definition of G\^{a}teaux derivatives, we get
\begin{align*}
T_{jk,x,t}^{(1)}(H_n(x,t),D;D_0) = T_{jk,x,t,\text{I}}^{(1)}(H_n(x,t),D;D_0) + T_{jk,x,t,\text{II}}^{(1)}(H_n(x,t),D;D_0),
\end{align*}
where
\begin{align*}
& T_{jk,x,t,\text{I}}^{(1)}(H_n(x,t),D;D_0) \\
=& \frac{1}{(2\pi\mathbf{i})^2} \oint_{\mathcal{C}_k} \oint_{\mathcal{C}_j} \frac{1}{z-\wt z} [z\mathbf{I}_d - H_n(x,t)]^{-1} D [z\mathbf{I}_d - H_n(x,t)]^{-1} D_0 [\wt z\,\mathbf{I}_d - H_n(x,t)]^{-1} dzd\wt z,
\end{align*}
and
\begin{align*}
& T_{jk,x,t,\text{II}}^{(1)}(H_n(x,t),D;D_0) \\
=& \frac{1}{(2\pi\mathbf{i})^2} \oint_{\mathcal{C}_k} \oint_{\mathcal{C}_j} \frac{1}{z-\wt z} [z\mathbf{I}_d - H_n(x,t)]^{-1} D_0 [\wt z\,\mathbf{I}_d - H_n(x,t)]^{-1} D [\wt z\, \mathbf{I}_d - H_n(x,t)]^{-1} dzd\wt z.
\end{align*}
We will evaluate $T_{jk,x,t,\text{I}}^{(1)}(H_n(x,t),D;D_0)$ first. Using (\ref{inversecomplex}) we can write
\begin{align*}
& T_{jk,x,t,\text{I}}^{(1)}(H_n(x,t),D;D_0) \\
= & \sum_{\ell =1}^q \sum_{s=1}^q \sum_{w =1}^q \{\frac{1}{(2\pi\mathbf{i})^2} \oint_{\mathcal{C}_k} \oint_{\mathcal{C}_j} \frac{1}{(z-\wt z)(z-\mu_\ell(x,t))(z-\mu_s(x,t))(\wt z-\mu_w(x,t))}  dzd\wt z \;\\
&\hspace{2cm} \times [P_\ell(x,t) D P_s(x,t) D_0 P_w(x,t)]\}.
\end{align*}
Using Cauchy's theorem, we have for $k\neq j$,
\begin{align*}
&\frac{1}{(2\pi\mathbf{i})^2} \oint_{\mathcal{C}_k} \oint_{\mathcal{C}_j} \frac{1}{(z-\wt z)(z-\mu_\ell(x,t))(z-\mu_s(x,t))(\wt z-\mu_w(x,t))}  dzd\wt z \\
= &
\begin{cases}
0 & \text{if } (\ell\neq j \text{ and } s\neq j) \text{ or } w\neq k, \\
\nu_{jk}(x,t)\nu_{js}(x,t) &  \text{if } \ell = j \text{ and } s\neq j \text{ and } w=k,\\
\nu_{jk}(x,t)\nu_{j\ell}(x,t) &  \text{if } \ell \neq j \text{ and } s= j \text{ and } w=k,\\
-[\nu_{jk}(x,t)]^2& \text{if } \ell=s=j  \text{ and } w=k.
\end{cases}
\end{align*}
Similarly,
\begin{align*}
& T_{jk,x,t,\text{II}}^{(1)}(H_n(x,t),D;D_0) \\
= & \sum_{\ell =1}^q \sum_{s=1}^q \sum_{w =1}^q \{\frac{1}{(2\pi\mathbf{i})^2} \oint_{\mathcal{C}_k} \oint_{\mathcal{C}_j} \frac{1}{(z-\wt z)(z-\mu_\ell(x,t))(\wt z-\mu_s(x,t))(\wt z-\mu_w(x,t))}  dzd\wt z \;\\
&\hspace{2cm} \times  [P_\ell(x,t) D_0 P_s(x,t) D P_w(x,t)]\}.
\end{align*}
%
%
%
%
%
Hence overall
\begin{align*}
T_{j,x,t,\dagger}^{(1)}(H_n(x,t),D;D_0) = \sum_{m=1}^6 U_{jm,x,t,}(H_n(x,t),D;D_0),
\end{align*}
where
\begin{align*}
U_{j1,x,t}(H_n(x,t),D;D_0) = & \sum_{k\neq j} \sum_{s\neq j} \nu_{jk}(x,t)\nu_{js}(x,t) [P_j(x,t) D P_s(x,t) D_0 P_k(x,t)], \\
U_{j2,x,t}(H_n(x,t),D;D_0) =  & \sum_{k\neq j} \sum_{\ell\neq j} \nu_{jk}(x,t)\nu_{j\ell}(x,t) [P_\ell(x,t) D P_j(x,t) D_0 P_k(x,t)],\\
U_{j3,x,t}(H_n(x,t),D;D_0) =  & - \sum_{k\neq j} [\nu_{jk}(x,t)]^2 [P_j(x,t) D P_j(x,t) D_0 P_k(x,t)],\\
U_{j4,x,t}(H_n(x,t),D;D_0) =  & \sum_{k\neq j} \sum_{w\neq k} \nu_{kw}(x,t)\nu_{jk}(x,t) [P_j(x,t) D_0 P_k(x,t) D P_w(x,t)],\\
U_{j5,x,t}(H_n(x,t),D;D_0) =   & \sum_{k\neq j}\sum_{s\neq k} \nu_{jk}(x,t)\nu_{ks}(x,t)  [P_j(x,t) D_0 P_s(x,t) D P_k(x,t)],\\
U_{j6,x,t}(H_n(x,t),D;D_0) =  & \sum_{k\neq j} [\nu_{jk}(x,t)]^2 [P_j(x,t) D_0 P_k(x,t) D P_k(x,t)].
\end{align*}
Note that
\begin{align*}
\sum_{m=4}^6 U_{jm,x,t}(H_n(x,t),D;D_0) = & \sum_{k\neq j} \sum_{w\neq j} \nu_{jk}(x,t)\nu_{jw}(x,t)[P_j(x,t) D_0 P_k(x,t) D P_w(x,t)] \\
- & \sum_{k\neq j} [\nu_{jk}(x,t)]^2 [P_j(x,t) D_0 P_k(x,t) D P_j(x,t)]\\
- & \sum_{k\neq j} [\nu_{jk}(x,t)]^2 [P_j(x,t) D_0 P_j(x,t) D P_k(x,t)].
\end{align*}
For $j,k,\ell\in \mathcal{I}$, we denote $\Pi_{jk\ell}^{(\text{I})}=\Pi_{jk\ell}^{(\text{I})}(x,t;D_0,D)=P_j(x,t)D_0P_k(x,t)DP_\ell(x,t)$ and $\Pi_{jk\ell}^{(\text{II})}=\Pi_{jk\ell}^{(\text{II})}(x,t;D_0,D)=P_j(x,t)DP_k(x,t)D_0P_\ell(x,t)$. For $j\in\mathcal{I}_>$, we can write
\begin{align*}
&T_{j,x,t,\dagger}^{(1)}(H_n(x,t),D;D_0) \\
= &\mathop{\sum\sum}\limits_{k,\ell\in\mathcal{I}\backslash\{ j\}} \nu_{jk}(x,t)\nu_{j\ell}(x,t)[\Pi_{jk\ell}^{(\text{I})} + \Pi_{jk\ell}^{(\text{II})} + \Pi_{kj\ell}^{(\text{II})}] - \sum_{k\in\mathcal{I}\backslash\{ j\}} [\nu_{jk}(x,t)]^2[\Pi_{jjk}^{(\text{I})} + \Pi_{jjk}^{(\text{II})} + \Pi_{jkj}^{(\text{I})}]  .
\end{align*}
Similarly we get
\begin{align*}
&T_{j,\ddagger}^{(1)}(H_n(x,t),D;D_0) \\
= &\mathop{\sum\sum}\limits_{k,\ell\in\mathcal{I}\backslash\{ j\}} \nu_{jk}(x,t)\nu_{j\ell}(x,t)[\Pi_{jk\ell}^{(\text{II})} + \Pi_{jk\ell}^{(\text{I})} + \Pi_{kj\ell}^{(\text{I})}] - \sum_{k\in\mathcal{I}\backslash\{ j\}} [\nu_{jk}(x,t)]^2[\Pi_{jjk}^{(\text{II})} + \Pi_{jjk}^{(\text{I})} + \Pi_{jkj}^{(\text{II})}] .
\end{align*}
Hence
\begin{align*}
T_{j}^{(1)}(H_n(x,t),D;D_0) 
= &\sum_{i\in\{\text{I},\text{II}\}} \mathop{\sum\sum}\limits_{k,\ell\in\mathcal{I}\backslash\{ j\}} \nu_{jk}(x,t)\nu_{j\ell}(x,t)[\Pi_{jk\ell}^{(i)} + \Pi_{jk\ell}^{(i)} + \Pi_{kj\ell}^{(i)}] \\
&- \sum_{i\in\{\text{I},\text{II}\}} \sum_{k\in\mathcal{I}\backslash\{ j\}} [\nu_{jk}(x,t)]^2[\Pi_{jjk}^{(i)} + \Pi_{jjk}^{(i)} + \Pi_{jkj}^{(i)} ].
\end{align*}
By (\ref{thnsum}), we obtain
\begin{align}\label{t1finalexp}
T^{(1)}(H_n(x,t),D;D_0) 
= &\sum_{i\in\{\text{I},\text{II}\}} \sum_{j\in\mathcal{I}_>} \mathop{\sum\sum}\limits_{k,\ell\in\mathcal{I}_\leq} \nu_{jk}(x,t)\nu_{j\ell}(x,t)[\Pi_{jk\ell}^{(i)} + \Pi_{jk\ell}^{(i)} + \Pi_{kj\ell}^{(i)}] \nonumber\\
&- \sum_{i\in\{\text{I},\text{II}\}} \sum_{j\in\mathcal{I}_>} \sum_{k\in\mathcal{I}_\leq} [\nu_{jk}(x,t)]^2[\Pi_{jjk}^{(i)} + \Pi_{jjk}^{(i)} + \Pi_{jkj}^{(i)} ].
%
\end{align}
Similar to (\ref{pijkellf}) we have that for $i\in\{\text{I},\text{II}\}$, $1\leq j,k,\ell\leq q$, $x\in\mathcal{S}\oplus\delta_0$ and $t\in[0,1]$,
\begin{align*}
\|\Pi_{jk\ell}^{(i)}(x,t;D_0,D)\|_F\leq d^{3/2} \|D\|_F\|D_0\|_F .
\end{align*}
Therefore by (\ref{weylgap}) (\ref{ttaylorexp}), (\ref{t1finalexp}), when $\varepsilon_n^{(2)}$ is small enough we have
\begin{align}\label{tdiffbound}
\sup_{x\in\mathcal{S}\oplus\delta_0}\|T(\nabla^2 \wh f(x);D_{n,2}^e(x)) - T(\nabla^2 f(x);\widebar D_{n,2}^e(x))\|_F\lesssim \varepsilon_n^{(2)} \varepsilon_{n,e}^{(2)}.
\end{align}
Also it is clear that $\sup_{x\in\mathcal{S}\oplus\delta_0}\|T(\nabla^2 \wh f(x);D_{n,2}^e(x)) \|_F\lesssim \varepsilon_{n,e}^{(2)}$. Then immediately (\ref{whmxoverbound}) follows by observing (\ref{mxtdn2}) and (\ref{hmxtdn2}) and using a telescoping argument. 
\end{proof}

{\bf Proof of Theorem~\ref{multiplierquantile}}
\begin{proof}
For $\alpha\in(0,1)$, recall that $t_{1-\alpha}$ and $t_{1-\alpha}^e$ are the $(1-\alpha)$-quantiles of $\omega_n^{-1}\wt Z$ and $\sup_{x\in\wh{\mathcal{S}}(\rho_n)} |\wh p^e (x) - \wh p(x)|$, respectively. We denote the event in (\ref{zetan2res}) by $E_3$. Then $\mathbb{P}(E_3)\geq 1-\kappa$. 

We will show that for any fixed $\alpha$, $t_{1-\alpha}^e\geq \omega_n^{-1}\bar\sigma_{0}$ for $n$ large enough on the event $E_3$. Note with $\zeta_{n}^{\gamma,\kappa,q}$ given in (\ref{zetan2def}), on the event $E_3$, with the constant $C$ given in Proposition~\ref{multiplierbootapp},
\begin{align*}
& \mathbb{P}\Big(\sup_{x\in\wh{\mathcal{S}}(\rho_n)} |\wh p^e(x) - \wh p(x)| \leq t_{1-\alpha + C\zeta_{n}^{\gamma,\kappa,q}} \;|\;\X_n\Big) \\
\geq & \mathbb{P}(\omega_n^{-1} \wt Z \leq t_{1-\alpha + C\zeta_{n}^{\gamma,\kappa,q}} )-  C\zeta_{n}^{\gamma,\kappa,q} = 1-\alpha.
\end{align*}
Therefore on the event $E_3$,
\begin{align}\label{tewttplus}
t_{1-\alpha}^e \leq t_{1-\alpha + C\zeta_{n}^{\gamma,\kappa,q}}.
\end{align}
Similarly, we can show that on the event $E_3$, 
\begin{align}\label{tewttminus}
t_{1-\alpha}^e \geq t_{1-\alpha - C\zeta_{n}^{\gamma,\kappa,q}}.
\end{align}
Using Lemmas~\ref{wtzhquantile} and \ref{sigmah2bounds} we have for $n$ large enough (and therefore $h$ small enough), 
\begin{align*}
t_{1-\alpha - C\zeta_{n}^{\gamma,\kappa,q}} \geq C_1 \omega_n^{-1} \sqrt{|\log h|} \geq\omega_n^{-1} \bar\sigma_{0},
\end{align*}
where the constant $C_1$ is given in Lemma~\ref{wtzhquantile}. Therefore $t_{1-\alpha}^e\geq \omega_n^{-1} \bar\sigma_{0}$ for $n$ large enough on the event $E_3$. Hence using Theorem~\ref{gaussianappanti} and (\ref{tewttplus}), we have for some constant $\wt C>0,$
\begin{align*}
\mathbb{P}\Big(  \sup_{x\in\mathcal{S}} \wh p(x) \leq t_{1-\alpha}^e\Big) 
\leq & \mathbb{P}(\omega_n^{-1}\wt Z \leq t_{1-\alpha}^e ) +  \wt C(\log n)\delta_{n}^+  \\
%
%
\leq & \mathbb{P}(\{\omega_n^{-1} \wt Z \leq t_{1-\alpha + C\zeta_{n}^{\gamma,\kappa,q}} \}\cap E_3)  + \wt C(\log n)\delta_{n}^+ + \mathbb{P}(E_3^\complement) \\
\leq & 1-\alpha + \wt C(\log n)\delta_{n}^+  + C\zeta_{n}^{\gamma,\kappa,q} +\kappa .
\end{align*}
Similarly, using Theorem~\ref{gaussianappanti} and (\ref{tewttminus}) we have 
%
\begin{align*}
\mathbb{P}(  \sup_{x\in\mathcal{S}} \wh p(x) \leq t_{1-\alpha}^e) 
\geq & \mathbb{P}(\omega_n^{-1}\wt Z \leq t_{1-\alpha}^e ) -  \wt C(\log n)\delta_{n}^+  \\
\geq & \mathbb{P}(\{\omega_n^{-1}\wt Z \leq t_{1-\alpha - C\zeta_{n}^{\gamma,\kappa,q}}\}\cap E_3 )   - \wt C(\log n)\delta_{n}^+ -\mathbb{P}(E_3^\complement)  \\
\geq & 1-\alpha - \wt C(\log n)\delta_{n}^+  - C\zeta_{n}^{\gamma,\kappa,q}  -\kappa.
\end{align*}
Hence overall
\begin{align}\label{overallapprox}
\Big| \mathbb{P}\Big(  \sup_{x\in\mathcal{S}}\wh p(x) \leq  t_{1-\alpha}^e \Big)  - (1-\alpha)\Big| %
\leq  \wt C(\log n)\delta_{n}^+  + C\zeta_{n}^{\gamma,\kappa,q} +\kappa.
%
\end{align}
Choose $\kappa=\sqrt{\gamma} = \gamma^{-(1+1/q)}(\log n)^{3/2}(\gamma_{n,h}^{(0)})^{1/2}$, that is, $\gamma = (\log n (\gamma_{n,h}^{(0)})^{1/3})^{\frac{1}{1+2/(3q)}}.$ 
So for any arbitrarily small positive $\eta$, by taking $q=2/(3\eta)$ we have
\begin{align*}
\zeta_{n}^{\gamma,\kappa,q}  = O\Big( (\sqrt{\log n} (\gamma_{n,h}^{(0)})^{1/6})^{1-\eta} +  (\log n)^{3/2}  h^{-1}\rho_n^{1/\beta^\prime}\Big), 
\end{align*}
which then leads to
\begin{align}\label{ridgenessboundmu}
&\Big| \mathbb{P}\Big(  \sup_{x\in\mathcal{S}}\wh p(x) \leq  t_{1-\alpha}^e \Big)  - (1-\alpha)\Big| \nonumber\\
= &O\Big((\log n)\delta_{n}^++(\sqrt{\log n} (\gamma_{n,h}^{(0)})^{1/6})^{1-\eta} +  (\log n)^{3/2}  h^{-1}\rho_n^{1/\beta^\prime}\Big) \nonumber\\
= &O(\tau_n).
\end{align}
Hence by using (\ref{ridgenessboundmu}) and (\ref{eigenvaluesneg}) we get
\begin{align*}
\mathbb{P}(\mathcal{S}\subset \wh{\mathcal{S}}(t_{1-\alpha}^e) ) = & \mathbb{P}\Big( \sup_{x\in\mathcal{S}} \wh p(x) \leq  t_{1-\alpha}^e \;\;\text{and}\;\; \sup_{x\in\mathcal{S}} \wh\lambda_{r+1}(x) < 0\Big) \\
= & 1-\alpha + O(\tau_n).
\end{align*}
This is \eqref{multiplierquantileres}. 

Next we show (\ref{confiregsize}) for case (a). Notice that
\begin{align}\label{ridgenessonci}
\sup_{x\in \wh{\mathcal{S}}(t_{1-\alpha}^e)} p(x) & \leq \sup_{x\in \wh{\mathcal{S}}(t_{1-\alpha}^e)} \wh p(x) + \sup_{x\in \wh{\mathcal{S}}(t_{1-\alpha}^e)} |\wh p(x) - p(x)| 
 \leq t_{1-\alpha}^e + \sup_{x\in \mathcal{H} } |\wh p(x) - p(x)|.
\end{align}
With the choice of $\kappa$ and $\gamma$ above and using (\ref{tewttplus}) we have with probability at least $1-\kappa=1-\frac{1}{2}n^{-C_0}$ for some $C_0>0$ that $t_{1-\alpha}^e \leq t(1-\alpha + C\zeta_{n}^{\gamma,\kappa,q})\lesssim \omega_n^{-1}\sqrt{|\log h|} \simeq \gamma_{n,h}^{(2)}$ by using Lemma~\ref{wtzhquantile} and assumption (H). 
%
From (\ref{whpxphdiffbound}) we know
\begin{align}
\label{hatpdiff}
\sup_{x\in\mathcal{H}}|\wh p(x) - p(x)|\lesssim \varepsilon_n^{(1)} + \varepsilon_n^{(2)}.
\end{align}
By Lemmas~\ref{biasbound} and \ref{talabound} (also see (\ref{originalepb})), and using assumption (H), $\sup_{x\in\mathcal{H}}|\wh p(x) - p(x)|\lesssim\gamma_{n,h}^{(2)} +h^\beta\simeq\gamma_{n,h}^{(2)}$ with probability at least $1-\frac{1}{2}n^{-C_0}$.
Hence it follows from (\ref{ridgenessonci}) that $\sup_{x\in \wh{\mathcal{S}}(t_{1-\alpha}^e)} p(x)\lesssim \gamma_{n,h}^{(2)}$ with probability at least $1-n^{-C_0}$. 
Also notice that by (\ref{lambdar1diff})
\begin{align*}
\sup_{x\in\wh{\mathcal{S}}(t_{1-\alpha}^e)} \lambda_{r+1}(x) \leq \sup_{x\in\wh{\mathcal{S}}(t_{1-\alpha}^e)} \wh \lambda_{r+1}(x) + \sup_{x\in\wh{\mathcal{S}}(t_{1-\alpha}^e)} |\wh \lambda_{r+1}(x) - \lambda_{r+1}(x)| \leq \varepsilon_n^{(2)}.
\end{align*}
We are then able to use an argument similar to the proof of Lemma~\ref{ridgeinclusionlemma} to get (\ref{confiregsize}) for case (a). Case (b) can also be proved similarly by using the similar argument as in the proof of Lemma~\ref{ridgeinclusionlemma} and the details are omitted.
\end{proof}

\subsection{Proofs for Section \ref{unknowninference}}

{\bf Proof of Theorem \ref{betaprimeest}}
\begin{proof}
Let $\eta_n= \sup_{x\in\mathcal{H}} |\wh p(x) - p(x)|$, and denote the event $\eta_n \leq r_n^{\beta^\prime}$ by $E_0$. Using \eqref{hatpdiff}, Lemmas~\ref{biasbound} and \ref{talabound}, we have that for any $\ell>0$ and $n$ large enough 
\begin{align}
\label{etanbound}
\mathbb{P}(E_0) \geq 1- n^{-\ell}.
\end{align}
Let $c= ((C_U+2)/C_L)^{1/\beta^\prime} + 1$. Below we consider $r_n < \min(\delta_0/c,1)$ so that $\mathcal{S}\oplus (cr_n) \subset \mathcal{H}$ by assumption (F2). The following derivation will be conditional on $E_0$. 

For any $x\in \mathcal{H}\backslash[\mathcal{S}\oplus (cr_n)]$, and $y\in\mathcal{B}(x,r_n)$, we have that $y\in \mathcal{H}\backslash[\mathcal{S}\oplus ((c-1)r_n)]$ and hence 
\begin{align}
\label{phatlower}
\wh p(y) &\geq p(y) - |p(y) - \wh p(y)| \nonumber\\%
&\geq C_L (c-1)^{\beta^\prime}r_n^{\beta^\prime} - \eta_n \nonumber\\
&= (C_U+2) r_n^{\beta^\prime} - \eta_n \nonumber\\
&\geq (C_U+1) r_n^{\beta^\prime}.
\end{align}
For $x_*\in \mathcal{S}$ in assumption (F3), and any $y\in\mathcal{B}(x_*,r_n)$, we have 
\begin{align}
\label{phatupper}
\wh p(y) \leq p(y) + |p(y) - \wh p(y)| \leq C_U r_n^{\beta^\prime} +\eta_n \le (C_U+1) r_n^{\beta^\prime}.
\end{align}
Denote $a_n(x)=\sup_{y\in\mathcal{B}(x,r_n)} p(y)$ and $\wh a_n(x)=\sup_{y\in\mathcal{B}(x,r_n)} \wh p(y)$ so that $R_n=\inf_{x\in\mathcal{H} } \wh a_n(x)$. It follows from \eqref{phatlower} and \eqref{phatupper} that $\wh a_n$ achieves its infimum at a point in $\mathcal{S}\oplus (cr_n)$, denoted by $x_\dagger$, and therefore we can write $R_n = \wh a_n(x_\dagger).$ Note that assumptions (F1) and (F2) imply that $p\in\Sigma(\beta,\wt L_0,\mathcal{H})$ for some constant $\wt L_0>0$ using \eqref{whpxphdiffbound}. Let $\wt x_\dagger\in\mathcal{S}$ be a point such that $\|x_\dagger-\wt x_\dagger\|\leq cr_n$. For any $y\in\mathcal{B}(x_\dagger,r_n)$ and $\wt y\in\mathcal{B}(\wt x_\dagger,r_n)$, we have 
\begin{align}
|\wh p(y) - p(\wt y)| \leq |\wh p(y) - p(y)| + |p(y) - p(\wt y)|  \leq \eta_n + \wt L \|y-\wt y\|^\beta \leq \eta_n + \wt L (c+2)^\beta r_n^\beta. 
\end{align}
Consequently, on the one hand,
\begin{align}
R_n &\ge a_n(\wt x_\dagger) - |a_n(\wt x_\dagger) - \wh a_n(x_\dagger)| \nonumber\\
&\ge a_n(\wt x_\dagger) - \sup_{y\in\mathcal{B}(x_\dagger,r_n)} \sup_{\wt y\in\mathcal{B}(\wt x_\dagger,r_n)} |\wh p(y) - p(\wt y)| \nonumber\\
&\ge C_Lr_n^{\beta^\prime} - [ \eta_n+ \wt L (c+2)^\beta r_n^\beta].
\end{align}
On the other hand,
\begin{align}
R_n \le \wh a_n(x_*) \le a_n(x_*) +\sup_{y\in\mathcal{B}(x_*,r_n)} |\wh p(y) - p(y) | \le C_Ur_n^{\beta^\prime} + \eta_n.
\end{align}
%
%
%
%
%
Notice that $\wh\beta^\prime = \log_{r_n}(R_n)$. Therefore,
\begin{align}
\log_{r_n}(C_L-r_n^{-\beta^\prime}\eta_n - \wt L (c+2)^\beta r_n^{\beta-\beta^\prime} ) \geq \wh\beta^\prime - \beta^\prime \geq \log_{r_n}(C_U+r_n^{-\beta^\prime}\eta_n ) 
\end{align}
%
%
By \eqref{etanbound} and the assumption $\beta>\beta^\prime$, we immediately arrive at the conclusion of this theorem.
\end{proof}

{\bf Proof of Theorem \ref{testab}}
\begin{proof}
Case (b): Theorem \ref{betaprimeest} implies that for any $\ell>0$ and $n$ large enough, with probability at least $1-n^{\ell}$, $1/t_n-1/\beta^\prime< - 1/(2(\beta^\prime)^2)<0$ and hence $\rho_n^{1/t_n} / \rho_n^{1/\beta^\prime} = \rho_n^{1/t_n-1/\beta^\prime}\rightarrow\infty$. On this event, it follows from Lemma \ref{ridgeinclusionlemma} that 
for all $x\in \wh{\mathcal{S}}(\rho_n)$, $\mathcal{B}(x,\rho_n^{1/t_n})$ contains at least one ridge point, denoted by $z\in\mathcal{S}$, and hence \[\inf_{y\in \mathcal{B}(x,\rho_n^{1/t_n})}\|\nabla \wh f(y)\| \leq  \|\nabla \wh f(z)\|=\|\nabla \wh f(z) - \nabla f (z)\|.\] Consequently $T_n \leq \sup_{x\in \mathcal{S}} \|\nabla \wh f(x) - \nabla f (x)\|=:D_n$. Following similar arguments for Theorem \ref{multiplierquantile}, we can show that $\mathbb{P}(D_n \leq \tau^e_{1-\alpha}) = 1 - \alpha+o(1)$, which then leads to $\mathbb{P}(T_n < \tau^e_{1-\alpha}) \ge 1 - \alpha+o(1)$.

Case (a): Suppose that there exists $x_0\in\mathcal{S}$ such that $\|\nabla f(x_0)\| \neq 0.$ Due to the continuity of $\nabla f$, there exist $r_1\in(0,\delta_0]$ and $c_0>0$ such that $\|\nabla f(x)\|\geq c_0$ for all $x\in\mathcal{B}(x_0,r_1)$. Using Lemma \ref{ridgeinclusionlemma}, for any $\ell>0$ and $n$ large enough, $x_0\in\wh{\mathcal{S}}(\rho_n)$ with probability at least $1-n^{\ell}$. Conditional on this, when $n$ is large enough that $\rho_n^{1/t_n}\leq r_1$, we have 
\begin{align}
\label{Tnupperbound}
T_n \ge \inf_{y\in \mathcal{B}(x_0,\rho_n^{1/t_n})}\|\nabla \wh f(y)\| \ge \inf_{y\in \mathcal{B}(x_0,\rho_n^{1/t_n})}\|\nabla f(y)\| - \varepsilon_n^{(1)} \geq c_0 - \varepsilon_n^{(1)} .
\end{align}
Note that $\mathbb{P}(\varepsilon_n^{(1)} \ge c_0/2)\to0$ by Lemmas~\ref{biasbound} and \ref{talabound}, and $\mathbb{P}(\tau^e_{1-\alpha}\ge c_0/2)\to0$ by following similar arguments for Lemma \ref{wtzhquantile} and Proposition~\ref{multiplierbootapp}. From \eqref{Tnupperbound} we then arrive at the conclusion of this theorem. 
\end{proof}


\begin{thebibliography}{50}
\providecommand{\natexlab}[1]{#1}
\providecommand{\url}[1]{\texttt{#1}}
\expandafter\ifx\csname urlstyle\endcsname\relax
  \providecommand{\doi}[1]{doi: #1}\else
  \providecommand{\doi}{doi: \begingroup \urlstyle{rm}\Url}\fi

\bibitem[Aanjaneya et~al.(2012)Aanjaneya, Chazal, Chen, Glisse, Guibas, and
  Morozov]{accggm12}
M.~Aanjaneya, F.~Chazal, D.~Chen, M.~Glisse, L.~Guibas, and D.~Morozov.
\newblock Metric graph reconstruction from noisy data.
\newblock \emph{International Journal of Computational Geometry \&
  Applications}, 22:\penalty0 305--325, 2012.

\bibitem[Arias-Castro et~al.(2006)Arias-Castro, Donoho, and
  Huo]{arias2006adaptive}
Ery Arias-Castro, David~L Donoho, and Xiaoming Huo.
\newblock Adaptive multiscale detection of filamentary structures in a
  background of uniform random points.
\newblock \emph{The Annals of Statistics}, pages 326--349, 2006.

\bibitem[Arias-Castro et~al.(2016)Arias-Castro, Mason, and
  Pelletier]{arias2016estimation}
Ery Arias-Castro, David Mason, and Bruno Pelletier.
\newblock On the estimation of the gradient lines of a density and the
  consistency of the mean-shift algorithm.
\newblock \emph{The Journal of Machine Learning Research}, 17\penalty0
  (1):\penalty0 1487--1514, 2016.

\bibitem[Arias-Castro et~al.(2017)Arias-Castro, Lerman, and
  Zhang]{arias2017spectral}
Ery Arias-Castro, Gilad Lerman, and Teng Zhang.
\newblock Spectral clustering based on local pca.
\newblock \emph{The Journal of Machine Learning Research}, 18\penalty0
  (1):\penalty0 253--309, 2017.

\bibitem[Belloni and Oliveira(2018)]{belloni2018high}
Alexandre Belloni and Roberto~I Oliveira.
\newblock A high dimensional central limit theorem for martingales, with
  applications to context tree models.
\newblock \emph{arXiv preprint arXiv:1809.02741}, 2018.

\bibitem[Chazal et~al.(2015)Chazal, Huang, and Sun]{chazal2015gromov}
Fr{\'e}d{\'e}ric Chazal, Ruqi Huang, and Jian Sun.
\newblock Gromov--hausdorff approximation of filamentary structures using
  reeb-type graphs.
\newblock \emph{Discrete \& Computational Geometry}, 3\penalty0 (53):\penalty0
  621--649, 2015.

\bibitem[Chen et~al.(2015)Chen, Genovese, and Wasserman]{chen2015asymptotic}
YC~Chen, CR~Genovese, and L~Wasserman.
\newblock Asymptotic theory for density ridges.
\newblock \emph{Annals of Statistics}, 43\penalty0 (5):\penalty0 1896--1928,
  2015.

\bibitem[Chen et~al.(2016)Chen, Genovese, and Wasserman]{chen2016comprehensive}
Yen-Chi Chen, Christopher~R Genovese, and Larry Wasserman.
\newblock A comprehensive approach to mode clustering.
\newblock 2016.

\bibitem[Chen et~al.(2017)Chen, Genovese, and Wasserman]{chen2017density}
Yen-Chi Chen, Christopher~R Genovese, and Larry Wasserman.
\newblock Density level sets: Asymptotics, inference, and visualization.
\newblock \emph{Journal of the American Statistical Association}, 112\penalty0
  (520):\penalty0 1684--1696, 2017.

\bibitem[Cheng et~al.(2004)Cheng, Hall, and Hartigan]{cheng2004estimating}
Ming-Yen Cheng, Peter Hall, and John~A Hartigan.
\newblock Estimating gradient trees.
\newblock \emph{Lecture Notes-Monograph Series}, pages 237--249, 2004.

\bibitem[Chernozhukov et~al.(2014{\natexlab{a}})Chernozhukov, Chetverikov, and
  Kato]{chernozhukov2014anti}
Victor Chernozhukov, Denis Chetverikov, and Kengo Kato.
\newblock Anti-concentration and honest, adaptive confidence bands.
\newblock \emph{The Annals of Statistics}, 42\penalty0 (5):\penalty0
  1787--1818, 2014{\natexlab{a}}.

\bibitem[Chernozhukov et~al.(2014{\natexlab{b}})Chernozhukov, Chetverikov, and
  Kato]{chernozhukov2014gaussian}
Victor Chernozhukov, Denis Chetverikov, and Kengo Kato.
\newblock Gaussian approximation of suprema of empirical processes.
\newblock \emph{The Annals of Statistics}, 42\penalty0 (4):\penalty0 1564,
  2014{\natexlab{b}}.

\bibitem[Chernozhukov et~al.(2016)Chernozhukov, Chetverikov, and
  Kato]{chernozhukov2016empirical}
Victor Chernozhukov, Denis Chetverikov, and Kengo Kato.
\newblock Empirical and multiplier bootstraps for suprema of empirical
  processes of increasing complexity, and related gaussian couplings.
\newblock \emph{Stochastic Processes and their Applications}, 126\penalty0
  (12):\penalty0 3632--3651, 2016.

\bibitem[Damon(1998)]{damon1998generic}
James Damon.
\newblock Generic structure of two-dimensional images under gaussian blurring.
\newblock \emph{SIAM Journal on Applied Mathematics}, 59\penalty0 (1):\penalty0
  97--138, 1998.

\bibitem[Eberly(1996)]{eberly1996ridges}
David Eberly.
\newblock \emph{Ridges in Image and Data Analysis}, volume~7.
\newblock Springer Science \& Business Media, 1996.

\bibitem[Einmahl and Mason(2005)]{einmahl2005uniform}
Uwe Einmahl and David~M Mason.
\newblock Uniform in bandwidth consistency of kernel-type function estimators.
\newblock \emph{The Annals of Statistics}, 33\penalty0 (3):\penalty0
  1380--1403, 2005.

\bibitem[Federer(1959)]{federer1959curvature}
Herbert Federer.
\newblock Curvature measures.
\newblock \emph{Transactions of the American Mathematical Society}, 93\penalty0
  (3):\penalty0 418--491, 1959.

\bibitem[Ge et~al.(2011)Ge, Safa, Belkin, and Wang]{ge2011data}
Xiaoyin Ge, Issam Safa, Mikhail Belkin, and Yusu Wang.
\newblock Data skeletonization via reeb graphs.
\newblock \emph{Advances in Neural Information Processing Systems}, 24, 2011.

\bibitem[Genovese et~al.(2009)Genovese, Perone-Pacifico, Verdinelli, and
  Wasserman]{genovese2009path}
Christopher~R Genovese, Marco Perone-Pacifico, Isabella Verdinelli, and Larry
  Wasserman.
\newblock On the path density of a gradient field.
\newblock \emph{The Annals of Statistics}, 37\penalty0 (6A):\penalty0
  3236--3271, 2009.

\bibitem[Genovese et~al.(2012)Genovese, Perone-Pacifico, Verdinelli, and
  Wasserman]{genovese2012geometry}
Christopher~R Genovese, Marco Perone-Pacifico, Isabella Verdinelli, and Larry
  Wasserman.
\newblock The geometry of nonparametric filament estimation.
\newblock \emph{Journal of the American Statistical Association}, 107\penalty0
  (498):\penalty0 788--799, 2012.

\bibitem[Genovese et~al.(2014)Genovese, Perone~Pacifico, Verdinelli, Wasserman,
  et~al.]{genovese2014nonparametric}
Christopher~R Genovese, Marco Perone~Pacifico, Isabella Verdinelli, Larry
  Wasserman, et~al.
\newblock Nonparametric ridge estimation.
\newblock \emph{The Annals of Statistics}, 42\penalty0 (4):\penalty0
  1511--1545, 2014.

\bibitem[Gin{\'e} and Guillou(2002)]{gine2002rates}
Evarist Gin{\'e} and Armelle Guillou.
\newblock Rates of strong uniform consistency for multivariate kernel density
  estimators.
\newblock In \emph{Annales de l'Institut Henri Poincare (B) Probability and
  Statistics}, volume~38, pages 907--921. Elsevier, 2002.

\bibitem[Hall et~al.(1992)Hall, Qian, and Titterington]{hall1992ridge}
Peter Hall, Wei Qian, and DM~Titterington.
\newblock Ridge finding from noisy data.
\newblock \emph{Journal of Computational and Graphical Statistics}, 1\penalty0
  (3):\penalty0 197--211, 1992.

\bibitem[Hastie and Stuetzle(1989)]{hastie1989principal}
Trevor Hastie and Werner Stuetzle.
\newblock Principal curves.
\newblock \emph{Journal of the American Statistical Association}, 84\penalty0
  (406):\penalty0 502--516, 1989.

\bibitem[Jiang(2017)]{jiang2017density}
Heinrich Jiang.
\newblock Density level set estimation on manifolds with dbscan.
\newblock In \emph{International Conference on Machine Learning}, pages
  1684--1693. PMLR, 2017.

\bibitem[Kato(2013)]{kato2013perturbation}
Tosio Kato.
\newblock \emph{Perturbation theory for linear operators}, volume 132.
\newblock Springer Science \& Business Media, 2013.

\bibitem[Lecci et~al.(2014)Lecci, Rinaldo, and Wasserman]{lrw14}
F.~Lecci, A.~Rinaldo, and L.~Wasserman.
\newblock Statistical analysis of metric graph reconstruction.
\newblock \emph{Journal of Machine Learning Research}, 15:\penalty0 3425--3446,
  2014.

\bibitem[Magnus and Neudecker(2007)]{magnus2007matrix}
JR~Magnus and H~Neudecker.
\newblock \emph{Matrix Differential Calculus with Applications in Statistics
  and Econometrics (3rd Edition)}.
\newblock John Wiley \& Sons Ltd, 2007.

\bibitem[Mammen and Polonik(2013)]{mammen2013confidence}
Enno Mammen and Wolfgang Polonik.
\newblock Confidence regions for level sets.
\newblock \emph{Journal of Multivariate Analysis}, 122:\penalty0 202--214,
  2013.

\bibitem[Miller(1998)]{miller1998relative}
Jason~E Miller.
\newblock \emph{Relative Critical Sets in ? n and Applications to Image
  Analysis}.
\newblock The University of North Carolina at Chapel Hill, 1998.

\bibitem[Milnor(1963)]{milnor1963morse}
John~Willard Milnor.
\newblock \emph{Morse theory}.
\newblock Number~51. Princeton university press, 1963.

\bibitem[Niyogi et~al.(2008)Niyogi, Smale, and Weinberger]{niyogi2008finding}
Partha Niyogi, Stephen Smale, and Shmuel Weinberger.
\newblock Finding the homology of submanifolds with high confidence from random
  samples.
\newblock \emph{Discrete \& Computational Geometry}, 39:\penalty0 419--441,
  2008.

\bibitem[Ozertem and Erdogmus(2011)]{ozertem2011locally}
Umut Ozertem and Deniz Erdogmus.
\newblock Locally defined principal curves and surfaces.
\newblock \emph{The Journal of Machine Learning Research}, 12:\penalty0
  1249--1286, 2011.

\bibitem[Pulkkinen(2015)]{pulkkinen2015ridge}
Seppo Pulkkinen.
\newblock Ridge-based method for finding curvilinear structures from noisy
  data.
\newblock \emph{Computational Statistics \& Data Analysis}, 82:\penalty0
  89--109, 2015.

\bibitem[Qiao(2021)]{qiao2021asymptotic}
Wanli Qiao.
\newblock Asymptotic confidence regions for density ridges.
\newblock \emph{Bernoulli}, 27\penalty0 (2):\penalty0 946--975, 2021.

\bibitem[Qiao and Polonik(2016)]{qiao2016theoretical}
Wanli Qiao and Wolfgang Polonik.
\newblock Theoretical analysis of nonparametric filament estimation.
\newblock \emph{The Annals of Statistics}, pages 1269--1297, 2016.

\bibitem[Qiao and Polonik(2019)]{qiao2019nonparametric}
Wanli Qiao and Wolfgang Polonik.
\newblock Nonparametric confidence regions for level sets: Statistical
  properties and geometry.
\newblock \emph{Electronic Journal of Statistics}, 13:\penalty0 985--1030,
  2019.

\bibitem[Qiao and Polonik(2021)]{qiao2021algorithms}
Wanli Qiao and Wolfgang Polonik.
\newblock Algorithms for ridge estimation with convergence guarantees.
\newblock \emph{arXiv preprint arXiv:2104.12314}, 2021.

\bibitem[Qiao and Shehu(2022)]{qiao2022space}
Wanli Qiao and Amarda Shehu.
\newblock Space partitioning and regression maxima seeking via a
  mean-shift-inspired algorithm.
\newblock \emph{Electronic Journal of Statistics}, 16\penalty0 (2):\penalty0
  5623--5658, 2022.

\bibitem[Rigollet and Vert(2009)]{rigollet2009optimal}
Philippe Rigollet and R{\'e}gis Vert.
\newblock Optimal rates for plug-in estimators of density level sets.
\newblock \emph{Bernoulli}, 15\penalty0 (4):\penalty0 1154--1178, 2009.

\bibitem[Serre(2002)]{serre2002matrices}
D~Serre.
\newblock \emph{Matrices: Theory and Applications}.
\newblock Springer-Verlag, New York, 2002.

\bibitem[Shapiro(2002)]{shapiro2002differentiability}
Alexander Shapiro.
\newblock On differentiability of symmetric matrix valued functions.
\newblock \emph{School of Industrial and Systems Engineering, Georgia Institute
  of Technology}, 2002.

\bibitem[Singh et~al.(2009)Singh, Scott, and Nowak]{singh2009adaptive}
Aarti Singh, Clayton Scott, and Robert Nowak.
\newblock Adaptive hausdorff estimation of density level sets.
\newblock \emph{The Annals of Statistics}, 37\penalty0 (5B):\penalty0
  2760--2782, 2009.

\bibitem[Sousbie et~al.(2008)Sousbie, Pichon, Colombi, Novikov, and
  Pogosyan]{sousbie20083d}
T~Sousbie, C~Pichon, S~Colombi, D~Novikov, and D~Pogosyan.
\newblock The 3d skeleton: tracing the filamentary structure of the universe.
\newblock \emph{Monthly Notices of the Royal Astronomical Society},
  383\penalty0 (4):\penalty0 1655--1670, 2008.

\bibitem[Steinwart(2015)]{steinwart2015fully}
Ingo Steinwart.
\newblock Fully adaptive density-based clustering.
\newblock \emph{The Annals of Statistics}, 43\penalty0 (5):\penalty0
  2132--2167, 2015.

\bibitem[Vaart and Wellner(1996)]{vaart1996weak}
Aad~W Vaart and Jon~A Wellner.
\newblock \emph{Weak Convergence and Empirical Processes With Applications to
  Statistics}.
\newblock Springer, 1996.

\bibitem[Vershynin(2018)]{vershynin2018high}
Roman Vershynin.
\newblock \emph{High-dimensional probability: An introduction with applications
  in data science}, volume~47.
\newblock Cambridge university press, 2018.

\bibitem[Von~Luxburg(2007)]{von2007tutorial}
Ulrike Von~Luxburg.
\newblock A tutorial on spectral clustering.
\newblock \emph{Statistics and computing}, 17:\penalty0 395--416, 2007.

\bibitem[Wang et~al.(2019)Wang, Lu, and Rinaldo]{wang2019dbscan}
Daren Wang, Xinyang Lu, and Alessandro Rinaldo.
\newblock Dbscan: Optimal rates for density-based cluster estimation.
\newblock \emph{Journal of machine learning research}, 2019.

\bibitem[Wasserman(2018)]{wasserman2018topological}
Larry Wasserman.
\newblock Topological data analysis.
\newblock \emph{Annual Review of Statistics and Its Application}, 5:\penalty0
  501--532, 2018.

\end{thebibliography}
\end{document}